\documentclass[11pt, reqno]{amsart}
\usepackage{amsmath,amssymb,latexsym,esint,cite,mathrsfs,mathtools}
\usepackage{verbatim,wasysym}
\usepackage[left=2.6cm,right=2.6cm,top=2.6cm,bottom=2.5cm]{geometry}

\usepackage[english]{babel} 

\usepackage{setspace} 

\usepackage{microtype}
\usepackage{color,enumitem,graphicx}

\usepackage[colorlinks=true,urlcolor=blue, citecolor=red,linkcolor=blue,
urlcolor=green!45!black, 
linktocpage,pdfpagelabels, bookmarksnumbered,bookmarksopen]
{hyperref}

\usepackage[hyperpageref]{backref}

\usepackage[hang]{footmisc}
\setlength{\footnotemargin}{4mm}

\usepackage{xpatch}
\usepackage{footnotebackref} 

\makeatletter
\xpretocmd{\@adminfootnotes}{\let\@makefntext\BHFN@OldMakefntext}{}{}
\renewcommand\@makefntext[1]{%
 \@ifundefined{@makefnmark}
 {}
 {%
 \renewcommand\@makefnmark{%
 \mbox{%
 \textsuperscript{%
 \normalfont
 \hyperref[\BackrefFootnoteTag]{\@thefnmark}%
 }%
 }\,%
 }%
 \BHFN@OldMakefntext{#1}%
 }%
}
\makeatother

\usepackage{etoolbox} 

\patchcmd{\section}{\normalfont}{\normalfont \Large \bfseries}{}{}
 
\patchcmd{\subsection}{\normalfont}{\normalfont \large}{}{}
\patchcmd{\subsection}{-.5em}{.5\linespacing}{}{}

\patchcmd{\subsubsection}{-.5em}{.5\linespacing}{}{}

\DeclareRobustCommand{\SkipTocEntry}[5]{}

\let\oldtocsection=\tocsection
\let\oldtocsubsection=\tocsubsection
\let\oldtocsubsubsection=\tocsubsubsection

\renewcommand{\tocsection}[2]{\hspace{0em}\oldtocsection{#1}{#2}}
\renewcommand{\tocsubsection}[2]{\hspace{1em}\oldtocsubsection{#1}{#2}}
\renewcommand{\tocsubsubsection}[2]{\hspace{2em}\oldtocsubsubsection{#1}{#2}}

\newtheorem{lemma}{Lemma}[section]
\newtheorem{corollary}[lemma]{Corollary}
\newtheorem{proposition}[lemma]{Proposition}
\newtheorem{theorem}[lemma]{Theorem}

\theoremstyle{definition}

\newtheorem{remark}[lemma]{Remark}
\newtheorem{example}[lemma]{Example}
\newtheorem{definition}[lemma]{Definition}

\numberwithin{equation}{section}


\usepackage{orcidlink} 
\usepackage[square, comma, numbers, sort&compress]{natbib} 

\newcommand{\R}{\mathbb{R}}
\newcommand{\N}{\mathbb{N}}
\newcommand{\mc}[1]{\mathcal{#1}}
\newcommand{\dive}{{\rm div}}

\newcommand{\diam}{{\rm diam}}

\newcommand{\osc}{{\rm osc}}

\newcommand{\nequiv}{\not \equiv}
\def\eps{\varepsilon}

\def\pabs#1{\left|{#1}\right|}
\def\norm#1{\|#1\|}

\newcommand{\wto}{\rightharpoonup}

\renewenvironment{proof}[1][\proofname]{\medskip \noindent {\bfseries #1. }}{\qed \bigskip}



\title[Concavity for $p$-Laplace equations]
{Concavity and perturbed concavity \\ for $p$-Laplace equations}

\author[M.\ Gallo]{Marco Gallo \orcidlink{0000-0002-3141-9598}}
\author[M.\ Squassina]{Marco Squassina \orcidlink{0000-0003-0858-4648}}

\address[M.\ Gallo, M.\ Squassina]{\newline\indent Dipartimento di Matematica e Fisica
	\newline\indent
	Università Cattolica del Sacro Cuore
	\newline\indent
	Italy, Brescia, BS, Via della Garzetta 48, 25133}
\email{\href{mailto:marco.gallo1@unicatt.it}{marco.gallo1@unicatt.it}}
\email{\href{mailto:marco.squassina@unicatt.it}{marco.squassina@unicatt.it}}

\thanks{\emph{Fundings.} The first author is supported by INdAM-GNAMPA Projects “Mancanza di regolarità e spazi non lisci: studio di autofunzioni e autovalori”, codice CUP \#E53C23001670001\#, and “Metodi variazionali per problemi dipendenti da operatori frazionari isotropi e anisotropi", codice CUP
\#E5324001950001\#. 
The second author is member of INdAM-GNAMPA%
\bigskip
}

\subjclass[2020]{%
26B25, 
35B09, 
35B20, 
35B99, 
35D30, 
35E10, 
35J60, 
35J62, 
35R11. 
}
\keywords{
Concavity of solutions,
Perturbation results,
Quasilinear equation,
$p$-Laplacian,
Nonautonomous equations,
Fractional equation,
Uniqueness of critical point,
Nondegenerate critical point.
}

\begin{document}

\maketitle

\begin{abstract}
\vspace{-1.5em}
In this paper we study convexity properties for quasilinear Lane-Emden-Fowler equations of the type
$$\begin{cases}
-\Delta_p u = a(x) u^q & \quad \hbox{ in $\Omega$},\\ 
u >0 & \quad \hbox{ in $\Omega$}, \\
u =0 & \quad \hbox{ on $\partial \Omega$},
\end{cases}$$
when $\Omega \subset \mathbb{R}^N$ is a convex domain. 
In particular, in the subhomogeneous case $q \in [0,p-1]$, the solution $u$ inherits concavity properties from $a$ whenever assumed, while it is proved to be concave up to an error if $a$ is near to a constant. 
More general problems are also taken into account, including a wider class of nonlinearities. 
These results generalize some contained in \cite{Ken85} and \cite{Sak87}. 

Additionally, some results for the singular case $q \in [-1,0)$ and the superhomogeneous case $q>p-1$, $q \approx p-1$ are obtained. 
Some properties for the $p$-fractional Laplacian $(-\Delta)^s_p$, $s\in (0,1)$, $s \approx 1$, are shown as well. 

We highlight that some results are new even in the semilinear framework $p=2$; in some of these cases, we deduce also uniqueness (and nondegeneracy) of the critical point of $u$. 
\end{abstract}

\bigskip

\smallskip

\begin{center}
\begin{minipage}{10cm}
		\small
		\tableofcontents
\end{minipage}
\end{center}

\newpage

\section{Introduction}

\subsection{Some background}
\label{sec_background}

Qualitative properties of solutions of PDEs are a classical topic, and often the features of the domain and of the nonlinearity are inherited by the solutions. 
Consider the equation
\begin{equation}\label{eq_general_intro}
 \begin{cases}
-\Delta u = a(x) g(u) & \quad \hbox{ in $\Omega$},\\ 
u >0 & \quad \hbox{ in $\Omega$}, \\
u =0 & \quad \hbox{ on $\partial \Omega$},
\end{cases}
\end{equation}
with $\Omega \subset \R^N$: when $\Omega$ is the ball and $a(x)$ is radially symmetric and decreasing, a seminal result by \cite[Theorem 1']{GNN79} (see also \cite{DaSc04, MMPS14} for the $p$-Laplacian) states that the solution $u$ itself is radially symmetric.
Several generalizations of \cite{GNN79} have been taken into account considering, for example, sets which are symmetric only in one direction.
In this paper, we are interested in the case of convex domains $\Omega$, with no a priori assumption of symmetry. We will mainly focus on the power case $g(u)=u^q$ -- namely, Lane-Emden-Fowler equations -- but more general cases will be taken into account.

Generally, for a Dirichlet problem set in a convex domain one may expect that the solutions of \eqref{eq_general_intro} are concave:
when $a(x)$ is constant and $\Omega$ is the ball, this is the case for the torsion problem $-\Delta u=1$ (i.e. $q=0$) where the solution is explicit; the result anyway holds also in more general domains, like deformations of ellipses \cite{HNST18, Ste22} (see also \cite{ChWe23} for other nontrivial examples)%
\footnote{If $-\Delta u =1$ in $\Omega$, and $u$ is shown to be concave in $\Omega_k:=\{u \geq k\}$, then $v_k:= u-k$ solves $-\Delta v_k =1$ in $\Omega_k$ with $v_k=0$ on $\partial \Omega_k$, and it is concave. 
Actually, a \emph{propagation from the boundary} argument \cite{KeMn93, Ste22, ChWe23}, typical of equations with $g(0)>0$, ensures that concavity on $\partial \Omega_k$ is enough to achieve concavity in the whole $\Omega_k$.
}.
 Actually, the result keeps holding also for singular equations $q\leq -1$ (in arbitrary domains, see Theorem \ref{thm_intr_recap} below).
On the other hand, when $q>0$, the solution is seen to be never concave, no matter what the domain is \cite[Remark 4.1]{AAGS23}; this is essentially due to the fact that $g(0)=0$.

For general convex sets the situation is worse even for $q=0$: if $\Omega$ has some flat part, e.g. in triangles, $u$ is never concave \cite[Theorem 18 in Section 7]{Ken84} (see also \cite[Section 11]{KeMn93}, \cite{Ste22}). 
 What one can obtain is that some \emph{transformation} of $u$ is concave: 
in the case of the Laplacian, $\Omega$ a general convex domain and $a(x)$ constant, the story so far can be summarized in Theorem \ref{thm_intr_recap} below. 
To state it, we recall that a function $v$ is concave if $\lambda v(x) + (1-\lambda) v(y) \leq v\big( \lambda x + (1-\lambda) y\big)$
for $x,y \in \Omega$, $\lambda \in [0,1]$, while it is \emph{strictly} concave if the inequality is strict for $\lambda \in (0,1)$ and $x \neq y$; moreover, $v$ is \emph{strongly} concave if $x \mapsto v(x)- \frac{m}{2}|x|_2^2$ is concave for some $m>0$.
In addition, we recall that a function $u>0$ is \emph{$\alpha$-concave}, $\alpha \in \R \cup \{\pm \infty\}$, if:
\begin{itemize}
\item $u$ is constant, if $\alpha=+\infty$;
\item $\frac{1}{\alpha} u^{\alpha}$ is concave, if $\alpha\in (-\infty, 0) \cup (0,+\infty) $;
\item $\log(u)$ is concave, if $\alpha=0$;
\item $u$ is \emph{quasiconcave} (i.e. $u^{-1}([k,+\infty))$ are convex for any $k \in \R$, see also \eqref{eq_def_quasiconc}), if $\alpha=-\infty$.
\end{itemize}
In particular, if $\alpha=-1$, $u$ is said \emph{harmonic concave}. 
When $\alpha \in \R$ we define similarly strict and strong $\alpha$-concavity. 
Moreover, we recall that $u$ $\alpha$-concave implies $u$ $\beta$-concave for every $\beta \leq \alpha$. Notice that the definition for $\alpha=0$ is coherent with $\frac{1}{\alpha} (u^\alpha - 1) \to \log(u)$ as $\alpha \to 0$.

Here and after, by writing $\partial \Omega \in C^{k,\alpha}$ for some $k \in \N$, we will implicitly assume $\alpha \in (0,1]$; moreover, $2^*:=\frac{2N}{N-2}$ for $N\geq 3$ and $2^*:=+\infty$ for $N=2$, stands for the Sobolev critical exponent.
See Section \ref{sec_superhomog} for the definition of ground state.

\begin{theorem}
\label{thm_intr_recap}
Let $\Omega \subset \R^N$, $N \geq 2$, be open, bounded and convex. Let $u$ be a positive solution of
 $$
 \begin{cases}
-\Delta u = \lambda u^q & \quad \hbox{ in $\Omega$},\\ 
u =0 & \quad \hbox{ on $\partial \Omega$},
\end{cases}
$$
for some $q \in \R$ and $\lambda>0$. 
We have the following assertions.
\begin{itemize}
\item If $q \in (-\infty, 0)$ and $\partial \Omega \in C^{2,\alpha}$, then $u$ (unique) is locally strongly $\frac{1-q}{2}$-concave. 
\item If $q \in [0, 1]$, then $u$ (unique) is locally strongly $\frac{1-q}{2}$-concave. 
If in addition $\Omega$ is strongly convex with $\partial \Omega \in C^{2,\alpha}$, then $u$ is strongly $\frac{1-q}{2}$-concave.
\item If $q \in (1, 2^*-1) $, then 
\begin{itemize}
\item if $N=2$, then the (unique) ground state solution is locally strongly $\frac{1-q}{2}$-concave; 
\item if $\Omega$ is strongly convex with $\partial \Omega \in C^{2,\alpha}$, then there exists a solution which is strongly $\frac{1-q}{2}$-concave. 
\end{itemize}
\end{itemize}
\end{theorem}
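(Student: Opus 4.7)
This theorem is a compilation of results from the literature, so my plan is to indicate how each case reduces to a known technique rather than prove everything from scratch. The unifying device is the substitution $v:=u^{(1-q)/2}$ (with the convention $v:=\log u$ when $q=1$): showing that $u$ is $\tfrac{1-q}{2}$-concave amounts to showing that $v$ is concave if $q<1$, or convex if $q>1$. A direct computation from $-\Delta u=\lambda u^q$ yields a semilinear equation for $v$ with a quadratic gradient term, and this is the object on which a concavity maximum principle in the spirit of Korevaar, Kennington and Kawohl--Lewis is designed to act.

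For the regular subhomogeneous range $q\in[0,1]$, I would invoke the Kennington--Korevaar concavity function technique: the two-point function $C(x,y,\mu):=\mu v(x)+(1-\mu)v(y)-v(\mu x+(1-\mu)y)$ satisfies a differential inequality which, combined with the favourable sign produced by the substitution and with the boundary behaviour of $v$ on the convex $\Omega$, forbids a negative interior minimum. Uniqueness follows from the standard subhomogeneity (Brezis--Oswald) argument; strict concavity is then the strong maximum principle applied to the concavity inequality, and interior strong concavity is an elliptic-estimate/compactness upgrade. The global strong concavity under strong convexity with $\partial\Omega\in C^{2,\alpha}$ requires a boundary Hessian analysis showing that $v$ extends $C^2$ up to $\partial\Omega$ with negative-definite Hessian inherited from the positive second fundamental form.

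For the singular range $q\in(-\infty,0)$ the same substitution still produces an equation with favourable structure, since $(1-q)/2>1/2$, but now $u$ vanishes on $\partial\Omega$ while $u^q$ blows up there. Here the $C^{2,\alpha}$ assumption on $\partial\Omega$ is used to ensure existence, uniqueness, and enough boundary regularity for $v$ (following the classical theory of singular Dirichlet problems of Crandall--Rabinowitz--Tartar type), after which the concavity maximum principle proceeds as before.

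The genuinely hard case is the superhomogeneous one $q\in(1,2^*-1)$, where $\alpha=(1-q)/2<0$ so one must prove a convexity statement for a power of $u$, and the nonlinearity produced by the substitution has the unfavourable sign for a direct concavity maximum principle. For $N=2$ I would rely on the uniqueness of the positive ground state on planar convex domains (Lin, Damascelli--Grossi--Pacella), combined with the two-dimensional techniques that control the curvature of level sets via isoperimetric/Alexandrov--Bol inequalities. For general $N$ with strongly convex, $C^{2,\alpha}$ boundary, the existence of at least one concave solution is produced by a continuation in $q$ starting from the subhomogeneous side: Korevaar--Lewis type estimates propagate concavity along the deformation, provided strong convexity, via Hopf's lemma and the positive boundary curvature, controls the boundary contribution of the concavity function uniformly in $q$. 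This last step is where I expect the main obstacle, since one must prevent strict concavity from degenerating as $q$ crosses the threshold $1$.
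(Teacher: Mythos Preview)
Your substitution $v=u^{(1-q)/2}$ and the use of the Kennington--Korevaar concavity function to obtain (non-strict) concavity are exactly what the paper invokes; so is the boundary Hessian analysis under strong convexity. The references for the separate cases are also the right ones.

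The genuine gap is in how you pass from concavity to \emph{strict} and \emph{strong} concavity. You propose that ``strict concavity is then the strong maximum principle applied to the concavity inequality'' and that interior strong concavity is an ``elliptic-estimate/compactness upgrade''. Neither of these is the mechanism used in the paper (or in the literature it summarises), and neither is easy to make work: the concavity function lives on $\Omega\times\Omega\times[0,1]$, and a strong maximum principle there does not straightforwardly force strict positivity of $-D^2 v$ on $\Omega$. What the paper uses instead is the \emph{constant rank theorem} of Caffarelli--Friedman and Korevaar--Lewis. After the substitution one has an equation $\Delta w = b(w,|\nabla w|^2)$ with $b>0$ and $1/b$ convex in $w$; Korevaar--Lewis then says that the Hessian of the (already known to be) concave $w$ has constant rank throughout $\Omega$. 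Since the Hessian is full-rank near the interior maximum (Basener) or near the boundary when $\Omega$ is strongly convex, it is full-rank everywhere, which is exactly strong concavity on every $\Omega'\Subset\Omega$, and strict concavity globally.

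This same missing ingredient is what makes the superhomogeneous case go through. Your continuation-in-$q$ idea is indeed Lin's scheme, but the open part of the open--closed argument is not ``boundary control of the concavity function uniformly in $q$''; it is precisely the constant rank theorem upgrading concavity of $u_q^{(1-q)/2}$ to strong concavity, which is stable under $C^2$ perturbation and hence persists for nearby $q$. Without naming this tool, the step where ``strict concavity might degenerate as $q$ crosses $1$'' cannot be closed.
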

The nonstrict concavity part of Theorem \ref{thm_intr_recap} is a summary of several results: $q=0$ \cite{MaLi71}, $q =1$ \cite{BrLi76}, $q \in (0,1)$ \cite{Ken85}, $q>1$ \cite{Lin94}, and $q<0$ \cite{BGP99}.
The results are optimal in the following sense:
for $q \in [-1,0)$ \cite[pages 329-330]{BGP99} (see also Remark \ref{rem_alpha_mag_1} below), if $\Omega$ is the ball, then $u$ is not $\alpha$ concave for any $\alpha>\frac{1-q}{2}$. 
For $q = 0$ \cite[Remark 4.2.3 and Theorem 6.2]{Ken85}, \cite[page 328]{BGP99} there exists $\Omega$, subset of a cone, such that $u$ is not $\alpha$-concave for any $\alpha>\frac{1}{2}$.
For $q=1$ \cite[Theorem 15 in Section 7]{Ken84}, \cite[pages 328-329]{BGP99}, for each $\alpha>0$ there exists a suitably narrow $\Omega_{\alpha}$ such that $u$ is not $\alpha$-concave: 
in particular it suggests that, for every $\Omega$ sufficiently good (e.g. regular and with some control on the curvature) $u$ could be $\alpha$-concave for some $\alpha=\alpha(\Omega)>0$ sufficiently small -- e.g., in the square, $\alpha=\frac{1}{2}$ -- and this seems an open question; see also \cite{IST20} where it is conjectured a stronger $2$-$\log$-concavity for the eigenfunction. 
 Let us moreover recall that, even if $g \geq 1$ and it is smooth -- namely, a local perturbation of the torsion problem $g=1$ -- and $\Omega$ is smooth and symmetric, the solutions might not even be quasiconcave \cite{HNS16, AAGS23}; additionally, even if $\Omega$ is starshaped and close to a convex set, then the level sets of the solution of the torsion problem need not to be even connected \cite{GlGr22}. Finally, we also mention that assuming $\Omega$ convex in a single direction is not sufficient to get (quasi)concavity in that direction \cite{Wet11}.

Anyway, if $\Omega$ is chosen good enough, one can recover better concavity properties: when $\Omega$ is a ball and $q=0$, the torsional function $u$ is such that $\sqrt{\norm{u}_{\infty}-u}$ is concave (actually, this property -- stronger than concavity -- characterizes ellipsoids \cite{HNST18}), while if $q=1$ the eigenfunction is $\alpha$-concave for some $\alpha \in (\frac{1}{N},1)$ (for instance, $\alpha >\frac{\sqrt{3}+2}{4}\approx 0.93$ when $N=2$, see \cite{Lind94} for some explicit estimate of $\alpha$). 
\begin{remark}
\label{rem_alpha_mag_1}
It is easy to see that, whenever $a\equiv 1$, $g$ is not too singular and a Hopf boundary lemma holds, solutions $u\in C^1(\overline{\Omega}) \cap C^2(\Omega)$ of \eqref{eq_general_intro} are never $\alpha$-concave with $\alpha>1$: indeed, a straightforward computation shows that
$$-\Delta u^{\alpha} = - \alpha(\alpha-1) u^{\alpha-2} |\nabla u|^2 + \alpha u^{\alpha-1} g(u);$$
by staying close to $\partial \Omega$ we have $|\nabla u| \geq C >0$, while we may assume $u(x)=\eps$ with $\eps$ small. 
Thus
$$-\Delta u^{\alpha} \leq - \alpha(\alpha-1) \eps^{\alpha-2} C^2 + \alpha \eps^{\alpha-1} g(\eps);$$
If $\eps g(\eps) = o(1)$ as $\eps \to 0$, we have thus $-\Delta u^{\alpha}<0$ near the boundary; that is, $u^{\alpha}$ is not concave.
This remark in particular applies to $g(t)=t^q$ with $q \geq -1$, coherent with the above statements.
\end{remark}

Local strong concavity -- which implies strict concavity -- of Theorem \ref{thm_intr_recap} has been investigated by several authors \cite{APP81, SWYY85}, \cite[Theorem 4.4 and Corollary 4.6]{CaFr85}, \cite[Corollary 3.6]{BGP99}: the proofs are mainly based on continuation arguments and the famous constant rank theorem \cite[Theorem 1.1]{CaFr85}, which has been subsequently generalized by \cite{KoLe87} (see also \cite{BiGu09}). 
The argument runs as follows: considered for any $q\neq 1$ (resp. $q=1$) $w := -\textup{sign}(1-q) u^{\frac{1-q}{2}}$ (resp. $w:=-\log(u)$), we have that $w$ solves
\begin{equation}
\label{eq_intro_transformed}
\Delta w =-\frac{1-q}{w} \left(\frac{1+q}{(1-q)^2}|\nabla w|^2 + \frac{1}{2}\right) \qquad \hbox{(resp. $\Delta w = |\nabla w|^2 + \lambda $).}
\end{equation}
In each case we have that the right hand side is positive (on the image of $w$, see \cite[Lemma 3.1]{BGP99} for $q<-1$) with inverse which is convex in $w$, thus by \cite{KoLe87} the Hessian of $w$ has constant rank: since there exist points with full rank -- near the extremal point \cite[Lemma at page 207]{Bas76}, or near the boundary (see Proposition \ref{prop_info_bound_c2}) whenever this is strongly convex -- we obtain the global strict concavity (actually, the Hessian matrix has full rank everywhere in $\Omega$); see also \cite[page 80]{BMS23} for an alternative argument based on developable graphs. 
In \cite[Lemmas 2.5, 3.6 and 4.8]{LeVa08} evolutive arguments are employed in presence of smooth strictly convex domains, and a strong concavity is obtained for $q\geq 0$; we mention also \cite{APP81, MSY12} where estimates on the curvature of the level sets are given. 
We highlight that a remarkable consequence of the strict convexity is the uniqueness and nondegeneracy of the critical point -- namely, a maximum -- of the solution. 

\smallskip

In the nonautonomous case, \cite[Theorem 4.1]{Ken85} provided the following result: if $u$ is a positive solution of the Dirichlet problem
$$ \begin{cases}
-\Delta u = a(x) & \quad \hbox{ in $\Omega$},\\ 
u =0 & \quad \hbox{ on $\partial \Omega$},
\end{cases}$$
and $a(x)$ is $\theta\geq 1$ concave, then $u$ is $\frac{\theta}{1+2\theta}$-concave; we see that for $\theta \to +\infty$ we recover the torsion problem.
We recall also the results by \cite[Theorem 6.1]{BrLi76} and \cite[Appendix]{SWYY85} (see also \cite{KST24}), where eigenfunctions of $-\Delta u = (\lambda-V(x))u$ are shown to be $\log$-concave if the potential $V$ is convex and nonnegative -- actually the concavity is strong if $D^2 V>0$; see also \cite{Gom07} for some discussions in the superlinear case.

The theorem in \cite{Ken85} is sharp in the following sense \cite[Theorem 6.2]{Ken85}: there exists $\Omega$, subset of an open cone, such that for any $\theta \in [1,+\infty]$ there exists a $\theta$-concave function $a_{\theta}$ and a corresponding solution $u$ which is not $\alpha$-concave for any $\alpha>\frac{\theta}{1+2\theta}$.
Moreover, in \cite[Theorem 16 in Section 7]{Ken84}, it is shown that the condition $\theta \geq 1$ cannot be relaxed to $\theta>0$; it remains open anyway to show if the threshold $\theta=1$ is sharp (see also Corollary \ref{cor_strict_conc_const}). 

\smallskip

When $a(x)$ has no convexity property, we cannot expect concavity for $u$. 
A \emph{quantitative} version of the convexity principle has been developed in \cite{BuSq20} (see also \cite[Proposition 3.2]{ABCS23}): in a particular case it states that, if $u$ is a positive solution of
$$
 \begin{cases}
-\Delta u = a(x) u^q & \quad \hbox{ in $\Omega$},\\ 
u =0 & \quad \hbox{ on $\partial \Omega$},
\end{cases}
$$
$q \in [0,1)$, then there exists a convex function $\bar{v}$ such that
$$\norm{u^{\frac{1-q}{2}} - \bar{v}}_{\infty} \leq C_q \norm{\nabla a}_{\infty},$$
thus $u^{\frac{1-q}{2}}$ is \emph{close} to a convex function if $a$ is close to a constant, with a control on the error. Here $C_q \to +\infty$ as $q \to 1$: this is essentially related to the fact that the transformed equation of the eigenfunction problem \eqref{eq_intro_transformed} has a nonlinearity which is not strictly monotonic in $t$ (see Section \ref{sec_eigen_phi_unbound} for more comments); a different, parabolic, approach to deal with the eigenfunction problem can be found in \cite{GMrS25}.
The results in \cite{BuSq20, ABCS23} apply also to more general operators where classical regularity of solutions holds.

Let us mention that a similar result has been achieved in regards of radial symmetry: indeed in \cite{CCPP24} the authors show (roughly speaking) that the solutions of \eqref{eq_general_intro} in a ball satisfy
$$|u(x)-u(y)| \leq C \textup{def}(a)^{\alpha} \quad \hbox{for each $x,y \in B_1$, $|x|=|y|$},$$
where $\alpha \in (0,1]$ and $\textup{def}(a)$ is a quantity which measures how far $a$ is from being radially symmetric and decreasing.
We believe that this kind of quantitative results may have also some interest from an engineering point of view.

\medskip

The techniques on which the previous results are based mainly involve regularity of solutions and maximum principles on the \emph{concavity function} related to a function $v: \Omega \to \R$ (namely, a transformation of the solution):
$$\mc{C}_{v}(x,y,\lambda):= \lambda v(x) + (1-\lambda) v(y) - v\big( \lambda x + (1-\lambda) y\big)$$
for $x,y \in \Omega$, $\lambda \in [0,1]$. 
It is clear that $v$ is concave if and only if $\mc{C}_v \leq 0$. 
When the solutions are sufficiently regular, the abovementioned results have been generalized also to fully nonlinear frameworks \cite{BiSa13}.

Anyway, due to the regularity restrictions, these techniques cannot be directly applied to $p$-Laplace equations: a classical idea, thus, is to regularize the operator, apply the result and pass to the limit. 
This procedure requires at least two delicate steps: the first is the uniqueness of the solution, which is needed to discuss concavity properties of a \emph{fixed} solution. 
The second ingredient is the form of the regularization: as a matter of fact, we need a regularization process which preserves the concavity structure of the original equation. 
This is what has been done by \cite{Sak87} for $p \in (1,+\infty)$ and $q=0,1$, then generalized to $q \in [0,1]$ by \cite{BMS22} (and more general cases, see \eqref{eq_intr_general_p_g} below). 
Namely, they obtain that solutions of $-\Delta_p u = u^q$ are $\frac{p-1-q}{p}$-concave; this power turns to be relevant also for regularity information in singular equations, see Theorem \ref{thm_exist_singular}.
See also \cite[Section 3]{Lind94} and \cite[Section 3]{Kaw90} for explicit computations in the ball. 

Regarding strict concavity, very little is known in the case of the $p$-Laplacian: indeed, it seems that a direct application of the constant rank theorem is not generally the case when $p\neq 2$; nevertheless, in \cite{BMS23} the authors show that the concavity of the solutions is strict when $\Omega \subset \R^2$ (actually strong far from the boundary and from the critical point). 
The proof is delicate: to reach the goal, the authors show first that the solution has a single critical point, and exploit this information to apply a constant rank theorem out of the critical point.
Summing up, what is known \cite{Sak87, BMS22, BMS23} in the power case is the following result.
\begin{theorem}
\label{thm_intr_recap_plaplac}
Let $\Omega \subset\R^N$ be open, bounded and convex. Let $u$ be a weak positive solution of
 $$ \begin{cases}
-\Delta_p u = \lambda u^q & \quad \hbox{ in $\Omega$},\\ 
u =0 & \quad \hbox{ on $\partial \Omega$},
\end{cases}$$
for some $q \in [0,p-1]$ and $\lambda>0$. 
Then $u$ is $\frac{p-1-q}{p}$-concave. 
If $N=2$ and $\partial \Omega \in C^2$, then $u$ is strictly $\frac{p-1-q}{p}$-concave in $\Omega$, and locally strongly $\frac{p-1-q}{p}$-concave in $\Omega\setminus\{\bar{x}\}$, where $\bar{x}$ is the unique critical point of the solution.
\end{theorem}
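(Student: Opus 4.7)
The plan is to treat the two parts separately. For the $\frac{p-1-q}{p}$-concavity, the fundamental obstacle is that weak solutions of $-\Delta_p u = \lambda u^q$ are merely $C^{1,\beta}$, whereas the concavity maximum principle for the concavity function $\mathcal{C}_v$ (and its companion, the constant rank theorem) demands at least $C^2$ regularity. My approach therefore starts by setting $\alpha := \frac{p-1-q}{p} \in [0,\frac{p-1}{p}]$ and introducing a uniformly elliptic regularization, e.g.\ replacing $|\nabla u|^{p-2}$ by $(|\nabla u|^2+\varepsilon)^{(p-2)/2}$ and $u^q$ by a smooth subhomogeneous approximation bounded away from zero. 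The regularization must be chosen so that: (i) the solutions $u_\varepsilon$ are classical and $C^{2,\beta}$ in $\Omega$; (ii) they are unique, which is granted by the subhomogeneous structure $q\le p-1$; and (iii) after the change of variable $v_\varepsilon = -u_\varepsilon^{\alpha}$ (or $v_\varepsilon = -\log u_\varepsilon$ when $\alpha=0$), the resulting equation has the form $F(D^2 v_\varepsilon, \nabla v_\varepsilon, v_\varepsilon)=0$ with right-hand side convex in $(\nabla v_\varepsilon, v_\varepsilon)$, cf.\ the model equation \eqref{eq_intro_transformed}. The latter property is precisely the algebraic role played by $\alpha = (p-1-q)/p$.

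With this setup in place, I would apply the Korevaar–Kennington convexity maximum principle to $\mathcal{C}_{v_\varepsilon}$, in the quasilinear form adapted to the regularized $p$-Laplacian as in \cite{Sak87, BMS22}. This yields the convexity of $v_\varepsilon$, i.e.\ the $\alpha$-concavity of $u_\varepsilon$, on the whole $\Omega$. Uniform $C^{1,\beta}$-estimates for $p$-Laplace-type equations, together with uniqueness of the limit problem, give $u_\varepsilon \to u$ in $C^1_{\mathrm{loc}}(\Omega)$; since $\alpha$-concavity is preserved under pointwise monotone or uniform limits, $u$ itself is $\alpha$-concave.

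For the strict and strong concavity in the planar case $N=2$, I would first invoke the uniqueness of the critical point $\bar x$ of $u$ in $\Omega$, an Alessandrini–Sakaguchi–type fact that is genuinely two-dimensional. On $\Omega\setminus\{\bar x\}$ one has $\nabla u \ne 0$, so Tolksdorf's regularity makes $u$ locally $C^{2,\beta}$, and hence $w = u^\alpha$ is a classical $C^2$ solution of the transformed quasilinear equation on this open set. Then I would apply a constant rank theorem of Caffarelli–Friedman / Korevaar–Lewis type for the Hessian of $w$, in a version suited to the $p$-Laplacian as developed in \cite{BMS23}; this forces the rank of $D^2 w$ to be constant on each connected component of $\Omega\setminus\{\bar x\}$. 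Combined with the existence of a point where $D^2 w$ has full rank — produced near $\partial\Omega$ through the $C^2$ boundary regularity and a Hopf-type argument in the spirit of Proposition \ref{prop_info_bound_c2} — one upgrades $\alpha$-concavity to strict $\alpha$-concavity on $\Omega$ and to strong $\alpha$-concavity on every $\Omega'\Subset \Omega\setminus\{\bar x\}$.

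The hard part is decidedly the second step. Both ingredients — uniqueness of the critical point and the validity of a constant rank theorem — are very fragile when $p\ne 2$, and this is exactly what confines the strict-concavity conclusion to $N=2$. By contrast, the first part of the theorem, while technically delicate, is conceptually robust once a concavity-preserving regularization is available.
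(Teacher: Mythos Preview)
This theorem is stated in the paper as a summary of known results from \cite{Sak87, BMS22, BMS23}, not proved there; your sketch accurately captures the strategies of those references. The first part (regularize, apply a Korevaar--Kennington concavity maximum principle to the transformed function, pass to the limit using uniqueness) is exactly the \cite{Sak87, BMS22} scheme, and for the planar strict/strong part your order --- first establish uniqueness of the critical point, then use $C^2$ regularity away from it to run a constant rank argument on $\Omega\setminus\{\bar x\}$ --- is precisely the route of \cite{BMS23}.

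One technical point is worth sharpening. The naive regularization you name, replacing $|\nabla u|^{p-2}$ by $(|\nabla u|^2+\varepsilon)^{(p-2)/2}$, does \emph{not} by itself preserve the required structure after the change of variable $v=\varphi(u)$: the transformed equation picks up a term with the wrong sign, coming from $\psi''/\psi'$ multiplied by a factor that changes sign in $\varepsilon$. The regularization actually used in \cite{Sak87, BMS22} (and reproduced in Section~\ref{sec_approx_argu}) modifies the energy density to $(\varepsilon K(u)^{2/p}+|\nabla u|^2)^{p/2}$ with $K$ tied to the transformation via $K(\psi(t))=(\psi'(t))^p$, precisely so that after the substitution the source term retains the needed monotonicity in $t$ and harmonic concavity in $(x,t)$. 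You do flag that the regularization must be ``concavity-preserving'', which is the right instinct, but the specific form is nontrivial and not the standard one.
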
 
A different approach, based on concave envelopes of viscosity solutions, can be found in \cite{CrFr20} in the case of the eigenfunction (see also \cite{ALL97}).

Before presenting our results, let us recall briefly what happens when $g(u)$ is assumed general in \eqref{eq_general_intro} and $a(x)$ constant. 
A first result was given by \cite[Theorem 3.3]{Ken85}, who showed $\alpha$-concavity of solutions under some assumptions on $g(t)$ (see \eqref{eq_cond_extra_kenn}). 
A more natural transformation has been studied by \cite{BMS22} (see also \cite{CaFr85, Kaw85W, Ken88}): namely, under some suitable assumptions on $g$ (see Remark \ref{rem_BMS_assumptions}), the authors showed that, given a positive solution $u$ of
\begin{equation}\label{eq_intr_general_p_g}
 \begin{cases}
-\Delta_p u = g(u) & \quad \hbox{ in $\Omega$},\\ 
u =0 & \quad \hbox{ on $\partial \Omega$},
\end{cases}
\end{equation}
and set
$$\varphi(t):= \int_1^t (G(\tau))^{-\frac{1}{p}} d \tau,$$
$G(t) :=\int_0^t g(\tau) d\tau$, it results that $\varphi(u)$ is concave -- actually strictly concave when $N=2$, see \cite{BMS23}. 
This transformation seems to be relevant also in other frameworks, for instance the boundary behaviour of solutions of singular equations \cite[Section 2.6]{Gla02}.
Moreover, this approach is quite effective in the study of concavity of quasilinear equations of the type $-\dive(\alpha(u) \nabla u) + \frac{\alpha(u)}{2} |\nabla u|^2 = g(u)$: indeed, when $\alpha \nequiv const$, here power concavity seems to be not the right choice (even if $g$ is a power), while a transformation of the type $\varphi(t):= \int_1^t \big(\frac{\alpha(\tau)}{G(\tau)}\big)^{1/p} d \tau$, shaped on $\alpha$ (and $g$) turns to be successful. 
Finally, we highlight that we will use this abstract $\varphi$ to obtain a result on power transformations for the power singular equation (see Section \ref{sec_singular}).

We refer to \cite{AAGS23,GMrS25} for more references on the topic of concavity.

\subsection{Main results}

Aim of the paper is to generalize some of the previous results to the case $p \in (1,+\infty)$, $q \in [0,p-1]$ and $a$ (possibly) nonconstant, with or without concavity assumptions on it. 
We will further propose a general scheme which can be applied to more general nonlinearities $g(u)$ (and even $f(x,u)$), which extends also the semilinear setting proposed in \cite{BMS22}. 
Additional results will be considered as well (including the cases $q \in [-1, 0)$ and $q>p-1$ near $p-1$, as well as fractional equations), briefly commented below but fully presented in Section \ref{sec_further_results}. 
For the sake of clarity we focus in the introduction only to some of the results of the paper.
 
Consider the \emph{Lane-Emden-Fowler equation}
\begin{equation}\label{eq_general_problem*}
\begin{cases}
-\Delta_p u = a(x) u^q & \quad \hbox{ in $\Omega$},\\ 
u >0 & \quad \hbox{ in $\Omega$}, \\
u =0 & \quad \hbox{ on $\partial \Omega$};
\end{cases}
\end{equation}
we highlight that, by a suitable change of variable, this equation can be related also to the \emph{porous media equation} or the \emph{fast diffusion equation} (see e.g. \cite{LeVa08}).
In the whole paper by ``solution'' we will mean \emph{weak solution}. 
We start by a result on the exact concavity of (a power of) $u$, when $a$ is assumed concave as well.

\begin{theorem}[Exact concavity]
\label{thm_main_conc_exact}
Let $\Omega \subset \R^N$, $N\geq 2$, be open, bounded, convex, with $\partial \Omega \in C^{1,\alpha}$, and let $p\in (1,+\infty)$.
Let $q \in [0,p-1]$, and assume $a \in C^{1,\alpha}_{loc}(\Omega)$, $\alpha \in (0,1)$, and $a > 0$ on $\Omega$, $a$ $\theta$-concave with $\theta \geq 1$. 
Then the solution of \eqref{eq_general_problem*} is $\frac{\theta(p-1-q)}{1+ \theta p}$-concave. 
If $a$ is constant (i.e. $\infty$-concave), then the solution is $\frac{p-1-q}{p}$-concave. 
\end{theorem}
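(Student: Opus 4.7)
The plan is to combine the $\varphi$-transformation approach of \cite{Sak87, BMS22} for the autonomous $p$-Laplace equation with Kennington's \cite{Ken85} idea of tuning the concavity exponent to the $\theta$-concavity of the source. Concretely, set
\[
\beta := \frac{\theta(p-1-q)}{1+\theta p}, \qquad w := u^{\beta}
\]
(with $w := \log u$ when $\beta = 0$, i.e.\ $q = p-1$). A direct chain-rule computation on a smooth positive $u$ shows that $w$ satisfies
\[
-\Delta_p w \;-\; (p-1)\,\tfrac{1-\beta}{\beta}\,\frac{|\nabla w|^{p}}{w} \;=\; \beta^{p-1}\, a(x)\, w^{-1-1/\theta}.
\]
The exponent $\beta$ is singled out precisely so that the power of $w$ on the right-hand side equals $-1-1/\theta$, which dovetails with the $\theta$-concavity of $a$ (equivalently, concavity of $a^{\theta}$) and reduces to $-1$ in the autonomous limit $\theta = +\infty$, recovering the setting of \cite{Sak87, BMS22}.

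Since $\Delta_p$ is degenerate, the plan is first to regularize via a Sakaguchi-type approximation (replacing $|\nabla u|^{p-2}$ with $(\eps + |\nabla u|^2)^{(p-2)/2}$, and the source $u^q$ with $(u+\eps)^q$ if $q<1$), obtaining a unique positive $u_\eps \in C^{2,\alpha}_{loc}(\Omega) \cap C^{1,\alpha}(\overline{\Omega})$ which converges to $u$ uniformly on compacta as $\eps \to 0$. Because $\beta$-concavity of $u_\eps^{\beta}$ is a closed condition under pointwise convergence, it suffices to prove the theorem for each smooth $u_\eps$, where all the above manipulations are pointwise valid.

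For the smooth regularized solution I would then apply a Korevaar-Kennington concavity maximum principle to the concavity function
\[
\mc{C}(x,y,\lambda) := \lambda\, w(x) + (1-\lambda)\, w(y) - w\bigl(\lambda x + (1-\lambda) y\bigr).
\]
If $\sup \mc{C} > 0$, boundary analysis based on the $C^{1,\alpha}$-regularity up to $\partial\Omega$ and a Hopf-type non-vanishing of $\nabla u$ excludes attainment on $\partial(\Omega \times \Omega \times [0,1])$. The supremum must therefore be attained at an interior triple, where the first- and second-order necessary conditions on $\mc{C}$, combined with the transformed PDE and the concavity of $a^{\theta}$, yield the desired contradiction; here the $-1/\theta$ exponent of $w$ on the right-hand side is exactly what is needed to pair the $\theta$-concavity of $a$ with the conjectured $\beta$-concavity of $w$.

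The main obstacle is this last step: for quasilinear operators the concavity maximum principle is substantially more delicate than in the semilinear setting of \cite{Ken85}, and the extra gradient term $|\nabla w|^{p}/w$ introduced by the transformation must be shown to contribute with the correct sign in the second-order analysis at the interior maximum of $\mc{C}$. What has to be verified is essentially that the right-hand side of the transformed equation, viewed as a function of $(x,w)$, is $F$-concave in Kennington's sense; this verification is what dictates the precise choice of $\beta$. When $a$ is constant ($\theta = +\infty$) the terms coming from differentiating $a$ in $x$ vanish and one recovers the optimal exponent $(p-1-q)/p$ of \cite{Sak87, BMS22}.
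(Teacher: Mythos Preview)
Your overall strategy---transform $u\mapsto w=u^\beta$, exclude boundary maxima via the blow-up $\varphi'(0^+)=+\infty$, then invoke a Korevaar--Kennington principle at an interior maximum---matches the paper, and your computation of the transformed right-hand side $\beta^{p-1}a(x)\,w^{-1-1/\theta}$ is correct; indeed $t^2$ times it, namely $a(x)\,t^{1-1/\theta}$, is jointly concave by the product rule (Proposition~\ref{prop_propriet_concav_varie}: $a$ is $\theta$-concave, $t^{1-1/\theta}$ is $\tfrac{\theta}{\theta-1}$-concave, product is $1$-concave), which is exactly the verification needed for harmonic concavity of $b(x,t,\xi)$.

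The gap is the regularization, which you treat as routine but which is the heart of the matter. Replacing $|\nabla u|^{p-2}$ by $(\eps+|\nabla u|^2)^{(p-2)/2}$ in the $u$-equation and then passing to $w_\eps=u_\eps^\beta$ yields a non-divergence operator for $w_\eps$ whose second-order coefficients depend on $w_\eps$ itself (through $\eps+\beta^{-2}w_\eps^{2(1-\beta)/\beta}|\nabla w_\eps|^2$), not only on $\nabla w_\eps$; the interior concavity principle (Theorem~\ref{thm_concav_inter_class}) requires $a_{ij}=a_{ij}(\nabla v)$ and therefore no longer applies. The fix (Section~\ref{sec_approx_argu}, Step~8) is to regularize with $\bigl(\eps K(u)^{2/p}+|\nabla u|^2\bigr)^{(p-2)/2}$ where $K(t)=(\varphi'(t))^{-p}$ is \emph{tied to the transformation}; this choice makes the factor $w^{2(1-\beta)/\beta}$ cancel and the transformed operator become the genuinely uniformly elliptic $H_\eps(\nabla w_\eps)$-operator. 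Even then, the regularized right-hand side acquires a term $H_\eps(\xi)^{(p-2)/p}\bigl((p-1)|\xi|^2-\eps\bigr)$ of variable sign; keeping the monotonicity and joint-concavity conditions intact in the presence of this term is what forces the paper's singular multiplicative decomposition $a(x)t^q=\bigl(a(x)t^{-q_1}\bigr)\,t^{q_2}$ and the auxiliary threshold $\Theta$ (cf.\ \eqref{eq_assump_Theta} and the discussion opening Section~\ref{sec_applic}). You flag ``the extra gradient term $|\nabla w|^p/w$'' as the obstacle, but the real difficulty sits one level earlier: producing an $\eps$-equation for $w_\eps$ to which any concavity maximum principle applies at all.
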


Notice that, $\frac{\theta(p-1-q)}{1+ \theta p} \to \frac{p-1-q}{p}$ as $\theta \to +\infty$; moreover for $p=2$ the result is coherent with \cite[Theorem 4.1]{Ken85}, while for $p\neq 2$ and $a \equiv const$ it is coherent with \cite{Sak87, BMS22}. 

\begin{remark}
We highlight that the $C^{1,\alpha}$ regularity of the boundary is exploited in Theorem \ref{thm_main_conc_exact} (and its corollaries) only to gain the uniqueness of the solution. According to \cite{BeKa02, KaLi06}, if one focuses only to ground state solutions, or if $q=p-1$, this assumption can be relaxed.
\end{remark}

\begin{remark}
We observe that, due to the fact that we discuss the qualitative behaviour of a fixed solution $u$, the result in Theorem \ref{thm_main_conc_exact} can be adapted also to other equations, e.g. Kirchhoff equations
$$-\left( \rho+ \int_{\Omega} |\nabla u|^p\right) \Delta_p u = a(x) u^q \quad \hbox{in $\Omega$},$$
$\rho \geq 0$, where the weight of the operator is actually a fixed positive number.
\end{remark}

An application of the previous result is given by the (possibly nonradial) Hardy-Hénon type equations; see also \cite[Remark 2.3]{LeVa08} for further examples. 
Notice that the classical $2$-norm $|x|$ is not even quasiconcave (in any subset of $\R^N$): see \cite{AmGl14} for symmetry breaking results in the ball, while \cite{AvBr20} for symmetry results for singular powers of Hardy-Leray type. 

\begin{example}[Hardy-Hénon type equation]
\label{exam_hardy_henon}
Let $\Omega \subset \R^N$, $N\geq 2$, be open, bounded, convex, with $\partial \Omega \in C^{1,\alpha}$, and let $p\in (1,+\infty)$ and $q \in [0,p-1]$. 
Assume that $u$ is a solution of one of the following problems, with the conditions
$$u>0 \quad \hbox{in $\Omega$}, \qquad u=0 \quad \hbox{on $\partial \Omega$.}$$
\begin{itemize}
\item Assume $\omega \in [0,1]$. Consider $u$ solution of
$$-\Delta_p u = d(x, \partial \Omega)^{\omega} u^q \quad \hbox{ in $\Omega$}.$$
\item Assume $\Omega \subset B_R(0)$ for some $R>0$, and let $\sigma \geq 1$, $\omega \in [0,1]$. Consider $u$ solution of
$$-\Delta_p u = (R^{\sigma}-|x|^{\sigma})^{\omega} u^q \quad \hbox{ in $\Omega$}.$$
\item Assume $\Omega \subset B_R(0)$ for some $R>0$, and let $k>0$, $\sigma>1$, $\omega \in (0,1]$. Assume $R < \left(\frac{k(\sigma-1)\omega}{2 \sigma +\omega}\right)^{\frac{2}{s}}$. 
Consider $u$ solution of
$$-\Delta_p u = \frac{1}{(k + |x|^{\sigma})^2} u^q \quad \hbox{ in $\Omega$}.$$
Notice that for $N=p=\sigma=2$, $k=1$ and $q=0$ the equation is related, via stereographic projection, to the torsion problem in small subdomains of $\mathbb{S}^2$ (see e.g. \cite[Theorem 1.5 and its proof]{KST24}).
\item
Assume $\Omega$ is a subset of the half-plane $\{ x\in \R^N \mid \sum_{i=1}^k x_i > 0\}$, $k \in \{1,\dots,N\}$, and let $\omega \in [0,1]$.
Consider $u$ solution of 
$$-\Delta_p u = \left(\sum_{i=1}^k x_i\right)^{\omega} u^q \quad \hbox{ in $\Omega$}.$$
In particular, if $\Omega$ is a subset of the hyperoctant $\{ x\in \R^N \mid x_i > 0 \; \hbox{for each $i$}\}$, and $u$ is a solution of
$$-\Delta_p u = |x|_1^{\omega} u^q \quad \hbox{ in $\Omega$}$$
where $|x|_1$ is the $1$-norm in $\R^N$.
\item Assume $\Omega \subset \{(x,y) \in \R^2 \mid x,y > 0\}$ and $\omega_1, \omega_2 \geq 0$ with $\omega:= \omega_1+\omega_2 \in [0,1]$. 
Consider $u$ solution of
$$-\Delta_p u = x^{\omega_1} y^{\omega_2} u^q \quad \hbox{ in $\Omega$}.$$
\end{itemize}
Then $u$ is $\frac{p-1-q}{\omega+ p}$-concave. 
\end{example}

We move now to perturbed concavity: by assuming $a$ close to a constant function, then a power of $u$ is close to a concave function with a comparable error.
We present two possible approaches and results: see Remark \ref{rem_ulam_thm} for a comparison. 
We highlight that the estimates on which the following result is based have an interest on their own (see Theorem \ref{thm_compar_1}).

\begin{theorem}[Perturbed concavity I]
\label{thm_approx_concav_compar}
Let $\Omega \subset \R^N$, $N\geq 2$, be open, bounded, convex, with $\partial \Omega \in C^{1,\alpha}$, and let $p\in (1,+\infty)$.
Assume $a \in L^{\infty}(\Omega)$, $\alpha \in (0,1)$. 
Let $a_{\infty} \in (0,+\infty)$ be a constant and $u_{\infty}$ be a positive solution of
$$\begin{cases}
-\Delta_p u_{\infty} = a_{\infty} & \quad \hbox{ in $\Omega$},\\ 
u_{\infty} =0 & \quad \hbox{ on $\partial \Omega$}.
\end{cases}$$
Then the solution $u \in C^{0,\beta}(\overline{\Omega})$, $\beta \in (0,1]$, of \eqref{eq_general_problem*} with $q=0$ satisfies
\begin{equation}\label{eq_confr_explic}
\norm{u^{\frac{p-1}{p}}-u_{\infty}^{\frac{p-1}{p}}}_{\infty} \leq
C \norm{a -a_{\infty}}_{\infty}^{\kappa }
\end{equation}
 for some $C=C(p, \Omega, a_{\infty}, \alpha, \norm{a}_{\infty})>0$ and some $\kappa =\kappa(p, N, \beta) \in (0,1)$. 
\end{theorem}

We highlight that estimate \eqref{eq_confr_explic} can be expressed also in terms of the concavity function of $u^{\frac{p-1}{p}}$, see Remark \ref{rem_ulam_thm}.
When $u_{\infty}$ is merely concave, \eqref{eq_confr_explic} cannot give precise information on the exact concavity of $u$. If $u_{\infty}$ is assumed \emph{strictly} concave, then some information can be deduced on $\eps$-uniform concavity (see Corollary \ref{cor_uniform_eps_conc_const}). 
Nevertheless, when $u_{\infty}$ is \emph{strongly} concave, and a family of solutions is assumed to converge in $C^2$ to $u_{\infty}$, the concavity of $u_{\infty}$ is inherited definitely by such a family, for $a(x)$ sufficiently close to $a_{\infty}$. 
This result highlights that concavity of solutions may occur also in presence of nonconcave sources, and this partially answers to a question raised in \cite[Theorem 16 in Section 7]{Ken84}.
Namely, in the semilinear case we combine the above result with the information on the limiting problem given by Theorem \ref{thm_intr_recap}.

\begin{corollary}
\label{cor_strict_conc_const}
Let $\Omega \subset \R^N$, $N\geq 2$, be open, bounded, strongly convex, with $\partial \Omega \in C^{2,\alpha}$, and let $p=2$. 
Consider $(a_n)_n : \Omega \to \R$, and assume that, for some $a_{\infty}>0$ constant
$$a_n\to a_{\infty} \quad \hbox{in $C^{0,\alpha}(\Omega)$ as $n \to +\infty$}.$$
Then the positive solution $u_n$ of 
\begin{equation}\label{eq_intr_a_n_conv}
\begin{cases}
-\Delta u_n = a_n(x) & \quad \hbox{ in $\Omega$},\\ 
u_n =0 & \quad \hbox{ on $\partial \Omega$},
\end{cases}
\end{equation}
is such that $u_n$ is strongly $\frac{1}{2}$-concave for $n \geq n_0 \gg 0$. In particular, for these values of $n$, the level sets of $u_n$ are strictly convex and $u_n$ has a single (and nondegenerate) critical point in $\Omega$.
\end{corollary}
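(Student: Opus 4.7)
The plan is to bootstrap the quantitative estimate of Theorem~\ref{thm_approx_concav_compar} using the \emph{exact} strong concavity furnished by Theorem~\ref{thm_intr_recap} for the constant-coefficient limit problem. First I would let $u_\infty$ denote the unique positive solution of $-\Delta u_\infty = a_\infty$ in $\Omega$ with $u_\infty = 0$ on $\partial\Omega$. Applying Theorem~\ref{thm_approx_concav_compar} at $p=2$, $q=0$ yields
\[
\|u_n^{1/2}-u_\infty^{1/2}\|_{L^\infty(\Omega)} \le C\,\|a_n-a_\infty\|_\infty^{\kappa}\longrightarrow 0.
\]
Since $a_n\to a_\infty$ in $C^{0,\alpha}(\Omega)$ and $\partial\Omega\in C^{2,\alpha}$, boundary Schauder estimates applied to $-\Delta(u_n-u_\infty)=a_n-a_\infty$ upgrade this to $u_n\to u_\infty$ in $C^{2,\alpha}(\overline\Omega)$; in particular the Hopf lemma for $u_\infty$ gives $|\nabla u_\infty|\ge c_0>0$ on $\partial\Omega$, hence $|\nabla u_n|\ge c_0/2$ there uniformly for $n\ge n_0$. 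Theorem~\ref{thm_intr_recap} in the regime $q=0$, $p=2$, $\Omega$ strongly convex with $C^{2,\alpha}$ boundary then provides a constant $c_\infty>0$ with $D^2 v_\infty\le -c_\infty I$ on $\Omega$, where $v_\infty:=u_\infty^{1/2}$, in the sense of Definition~\ref{def_unif_strong_concav}.

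The remaining task is to transfer strong concavity from $v_\infty$ to $v_n:=u_n^{1/2}$ for large $n$. I would split $\Omega$ into an interior piece $\Omega_\delta:=\{x\in\Omega:u_\infty(x)\ge\delta\}$ and a boundary layer $N_\delta:=\Omega\setminus\Omega_\delta$ with $\delta>0$ small. On $\Omega_\delta$, both $u_\infty$ and $u_n$ (for $n$ large) are uniformly bounded away from zero, so $u\mapsto u^{1/2}$ is $C^{2,\alpha}$-smooth there and the $C^{2,\alpha}(\overline\Omega)$ convergence of $u_n$ translates into $v_n\to v_\infty$ in $C^{2}(\Omega_\delta)$; hence $D^2v_n\le -\tfrac{c_\infty}{2}I$ on $\Omega_\delta$ for $n\ge n_1$. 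On the layer $N_\delta$ I would exploit the identity
\[
D^2 v_n=\frac{1}{2u_n^{1/2}}\,D^2u_n-\frac{1}{4u_n^{3/2}}\,\nabla u_n\otimes\nabla u_n,
\]
in which the rank-one subtraction dominates in the direction of $\nabla u_n$ by the uniform Hopf bound, while the tangential contribution is made negative by the Hessian information on the boundary induced by strong convexity of $\partial\Omega$ (the mechanism encoded in Proposition~\ref{prop_info_bound_c2}). Choosing $\delta$ small and $n$ large should yield $D^2 v_n\le -\tfrac{c_\infty}{2}I$ on $N_\delta$ as well.

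Gluing the two regions would give strong $\tfrac{1}{2}$-concavity of $u_n$ globally on $\Omega$ for $n\ge n_0$, and the assertions on strict convexity of the superlevel sets and uniqueness and nondegeneracy of the critical point follow at once: $D^2 v_n<0$ forces $D^2 u_n<0$ at the (unique) interior maximum, where $\nabla u_n=0$. The hard part, I expect, will be the boundary-layer analysis: both $D^2v_n$ and $D^2v_\infty$ diverge to $-\infty$ as $\partial\Omega$ is approached, so the strong concavity of $v_\infty$ cannot be transferred to $v_n$ by a naive pointwise limit; making this step rigorous genuinely requires combining the uniform Hopf lower bound with the quantitative tangential Hessian information supplied by the $C^{2,\alpha}$ strong convexity of $\partial\Omega$.
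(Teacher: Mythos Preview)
Your approach is essentially the paper's, which is much terser: it notes that $-\Delta(u_n-u_\infty)=a_n-a_\infty$, applies Schauder to get $\|u_n-u_\infty\|_{C^{2,\alpha}}\le C\|a_n-a_\infty\|_{C^{0,\alpha}}$, and then cites Theorem~\ref{thm_intr_recap} (strong $\tfrac12$-concavity of $u_\infty$) together with Proposition~\ref{prop_strong_conc} to conclude.

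Two remarks. First, your opening appeal to Theorem~\ref{thm_approx_concav_compar} is superfluous: once Schauder gives $C^{2,\alpha}(\overline\Omega)$ convergence, the $L^\infty$ estimate on $u_n^{1/2}-u_\infty^{1/2}$ is a much weaker byproduct and plays no further role. Second, your boundary-layer discussion is the honest part of the argument that the paper's one-line citation of Proposition~\ref{prop_strong_conc} leaves implicit: since $D^2(u^{1/2})$ blows up at $\partial\Omega$, the convergence $v_n\to v_\infty$ is \emph{not} uniform in $C^2(\Omega)$, so Proposition~\ref{prop_strong_conc} does not apply verbatim. Your split into an interior region $\Omega_\delta$ (where $u_n\ge c(\delta)>0$ and $C^2$ convergence of $v_n$ is genuine) and a boundary strip (where one runs the Hessian computation behind Proposition~\ref{prop_info_bound_c2} directly for each $u_n$, with constants uniform in $n$ thanks to the $C^2(\overline\Omega)$ convergence of $u_n$ and the uniform Hopf bound) is exactly how to make that step rigorous. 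So your proposal is correct and amounts to a more careful rendering of the same argument.
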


We comment now a different concavity perturbation result.
We recall, for some $a \in L^{\infty}(\Omega)$, the \emph{oscillation} related to $a$
$$\textup{osc}(a):=\sup_{\Omega} a - \inf_{\Omega} a.$$
Recall moreover the inner parallel set to $\Omega$, for any $\delta>0$,
$$\Omega_{\delta}:= \left\{ x \in \Omega \mid d(x,\partial \Omega) >\delta\right\}.$$

\begin{theorem}[Perturbed concavity II]
\label{thm_approx_concav_harmonic}
Let $\Omega \subset \R^N$, $N\geq 2$, be bounded, strongly convex, with $\partial \Omega \in C^{2,\alpha}$, and let $p\in (1,+\infty)$.
Let $q \in [0,p-1)$, and assume $a \in C^{1,\alpha}_{loc}(\Omega) \cap C^{0,\alpha}(\Omega)$, $\alpha \in (0,1)$, and $a>0$ on $\Omega$. 
Then the solution of \eqref{eq_general_problem*} satisfies 
$$\mc{C}_{u^{ \frac{p-1 - q}{p}}} \leq C \textup{osc}(a) \quad \hbox{ on $ \overline{\Omega} \times \overline{\Omega} \times [0,1]$}$$
where $C=C(u,\delta,a,p,q)>0$ is given by 
\begin{equation}\label{eq_constant_C}
C: = C_{p,q} \Big( \tfrac{ \norm{u}_{\infty}}{\min_{\overline{\Omega_{\delta}}} u}\Big)^{\frac{p-1-q}{p}} \Big( 2 + \tfrac{\osc(a)}{\min_{\overline{\Omega_{\delta}}} a}\Big) \tfrac{1}{ \min_{\overline{\Omega_{\delta}}} a} 
\end{equation}
and $C_{p,q}:=\frac{(q+1)^{\frac{1}{p}}p^{1-\frac{1}{p}} }{p-1-q}$; here $\delta>0$ small is suitably chosen.
\end{theorem}

We highlight that, since on the left hand side of \eqref{eq_constant_C} we consider the concavity of $u^{\frac{p-1-q}{p}}$, it is natural for the oscillation $\osc(a)$ to appear on the right hand side; in \cite{GMrS25} more general results -- involving the concavity function of $a$ -- are taken into account.

We notice that the constant appearing in \eqref{eq_constant_C} is bounded with respect to oscillation of $a(x)$, in the sense that it could be implicitly expressed only in terms of maximum and minimum of $a(x)$. 
As a consequence we have the following result.

\begin{corollary}
\label{cor_conv_oscul}
Let $\Omega \subset \R^N$, $N\geq 2$, be bounded, strongly convex, with $\partial \Omega \in C^{2,\alpha}$, and let $p\in (1,+\infty)$.
Let $q \in [0,p-1)$, and assume $a_{\infty}>0$ constant and $(a_n)_n \subset C^{1,\alpha}_{loc}(\Omega)\cap C^{0,\alpha}(\Omega)$, $\alpha \in (0,1)$, with $\norm{a_n - a_{\infty}}_{\infty}\to 0$ as $n \to +\infty$. 
Then solutions $(u_n)_n$ of \eqref{eq_general_problem*} with $a=a_n$ verify
 $$ \mc{C}_{u_n^{ \frac{p-1 - q}{p}}} \leq O(\norm{a_n - a_{\infty}}_{\infty}) \to 0 \quad \hbox{as $n\to +\infty$}.$$
\end{corollary}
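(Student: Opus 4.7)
The plan is to apply Theorem \ref{thm_approx_concav_harmonic} to each $u_n$, obtaining
$$\mc{C}_{u_n^{(p-1-q)/p}} \leq C_n \, \osc(a_n),$$
and then to show that $C_n$ remains uniformly bounded in $n$, while $\osc(a_n)\to 0$. Combining these two facts immediately yields the claim, since
$$\osc(a_n) = \sup_\Omega a_n - \inf_\Omega a_n \leq 2\norm{a_n - a_\infty}_\infty \to 0.$$

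To control $C_n$ uniformly, recalling its explicit form, I need uniform bounds in $n$ on four quantities: a uniform upper bound for $\norm{u_n}_\infty$, a uniform positive lower bound for $\min_{\overline{\Omega_\delta}} u_n$, a uniform positive lower bound for $\min_{\overline{\Omega_\delta}} a_n$, and control of $\osc(a_n)/\min_{\overline{\Omega_\delta}} a_n$. The bounds on $a_n$ are immediate from uniform convergence: for $n$ large we have $a_n \geq a_\infty/2$ on $\overline{\Omega}$ and $\osc(a_n)\leq 2\norm{a_n-a_\infty}_\infty$ is small. For $\norm{u_n}_\infty$, I would use a weak comparison argument: a super-solution to $-\Delta_p v = (\sup_n \norm{a_n}_\infty)\, v^q$ with zero boundary data (which exists since $q<p-1$, by standard sub-super solution methods and monotonicity of the operator) dominates every $u_n$, giving a uniform sup bound.

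The harder step is the uniform positive lower bound for $u_n$ on $\overline{\Omega_\delta}$. I would proceed by a convergence and compactness argument. Let $u_\infty$ denote the positive solution of \eqref{eq_general_problem*} with $a$ replaced by $a_\infty$; uniqueness in the subhomogeneous case $q<p-1$ (here the $C^{2,\alpha}$ boundary plays no role beyond what is already needed) makes $u_\infty$ well defined, and $u_\infty>0$ in $\Omega$ with $\min_{\overline{\Omega_\delta}} u_\infty =: m_\delta >0$. Using the $W^{1,p}$ boundedness of $(u_n)_n$ (testing the equation with $u_n$ and using the sup-bound), the $C^{1,\beta}$ regularity theory of Lieberman for the $p$-Laplacian with Hölder right-hand side, and the uniqueness of the limit, I obtain $u_n \to u_\infty$ in $C^0(\overline{\Omega})$. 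Thus $\min_{\overline{\Omega_\delta}} u_n \geq m_\delta/2$ for $n$ large, giving the required uniform lower bound.

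The main obstacle I expect is precisely this last step: justifying the uniform convergence $u_n \to u_\infty$ and ensuring that $u_\infty$ is strictly positive on $\overline{\Omega_\delta}$ with an a priori quantitative lower bound independent of $n$. Once these stability and positivity facts are in place, the constant $C_n$ is bounded uniformly in $n$, and the conclusion $\mc{C}_{u_n^{(p-1-q)/p}} \leq O(\norm{a_n - a_\infty}_\infty)$ follows directly from Theorem \ref{thm_approx_concav_harmonic} applied with $\delta$ fixed.
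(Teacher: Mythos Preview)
Your proposal is correct and follows the same overall scheme as the paper: apply Theorem~\ref{thm_approx_concav_harmonic} to each $u_n$, show $\osc(a_n)\leq 2\|a_n-a_\infty\|_\infty\to 0$, and verify that the constant $C_n$ stays bounded by controlling $\|u_n\|_\infty$ from above and $\min_{\overline{\Omega_\delta}}u_n$ from below.

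The only noteworthy difference is how you obtain the uniform lower bound on $u_n$. You go through a compactness/convergence argument ($u_n\to u_\infty$ in $C^0(\overline\Omega)$ via uniform $C^{1,\beta}$ estimates and uniqueness of the limit), which works but is heavier than needed. The paper instead invokes the comparison principle directly (Corollary~\ref{corol_stima_basso}): since $a_n(x)t^q\geq \tfrac{a_\infty}{2}t^q$ for large $n$ and $t\mapsto t^{q-(p-1)}$ is strictly decreasing, each $u_n$ dominates the fixed positive solution of $-\Delta_p u_0=\tfrac{a_\infty}{2}u_0^q$, giving $\min_{\overline{\Omega_\delta}}u_n\geq \min_{\overline{\Omega_\delta}}u_0>0$ in one step. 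Similarly, for the upper bound the paper simply cites $L^\infty$ regularity from the uniform $L^\infty$ bound on the right-hand side, rather than building a super-solution (though your comparison argument for that step is also fine and in the same spirit as Corollary~\ref{corol_stima_basso}).
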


\begin{remark}
\label{rem_ulam_thm}
As a further consequence of Theorem \ref{thm_approx_concav_harmonic}, by Hyers-Ulam Theorem \cite{HyUl52} we know that there exists a concave function $\bar{v}$ such that 
\begin{equation}\label{eq_confr_exists_conca} 
\norm{u^{ \frac{p-1 - q}{p}} - \bar{v}}_{\infty} \lesssim \textup{osc}(a); 
\end{equation}
notice that this is actually an equivalence, since the existence of such a function easily implies an estimate as the one in Theorem \ref{thm_approx_concav_harmonic}: indeed 
$$\mc{C}_{u^{ \frac{p-1 - q}{p}}} =\mc{C}_{u^{ \frac{p-1 - q}{p}} - \bar{v}} + \mc{C}_{\bar{v}} \lesssim \norm{u^{ \frac{p-1 - q}{p}} - \bar{v}}_{\infty}.$$
When $q=0$, we can compare this statement with the one in Theorem \ref{thm_approx_concav_compar}: both the results tell us that $u^{\frac{p-1}{p}}$ is near to a concave function $\bar{v}$. Theorem \ref{thm_approx_concav_compar} gives an exact information on who $\bar{v}$ is, and requires only $a \in C^{0,\alpha}(\Omega)$ (and no restriction on the sign) since no approximation argument (see Section \ref{sec_approx_argu}) is required to obtain the result; moreover, it can be generalized also to $a(x)$ close to a concave function (rather than a constant one), see Corollary \ref{corol_confron_3}. 
Theorem \ref{thm_approx_concav_harmonic}, instead, obtains $\bar{v}$ via an abstract result, but on the other hand gives a more accurate rate of convergence for the error, that is $\sim \textup{osc}(a)$ (instead of $\sim \textup{osc}(a)^{\theta \frac{\min\{1,p-1\}}{p}}$); moreover, Theorem \ref{thm_approx_concav_harmonic} is valid also for $q \neq 0$. The two results seem thus complementary. 
It would be interesting anyway to investigate some properties of the function $\bar{v}$ (e.g., if it satisfies some differential equation).
\end{remark}

\begin{remark}
We believe that the condition $a \in C^{1,\alpha}_{loc}(\R^N)$ in Theorem \ref{thm_approx_concav_harmonic} is merely technical (while $a \in L^{\infty}(\R^N)$ is crucial), due to the regularity issues of the approximation process.
Being $a \in C^1(\Omega)$ we can rephrase the conclusion of Theorem \ref{thm_approx_concav_harmonic} as
$$\mc{C}_{u^{ \frac{p-1 - q}{p}}} \leq C \norm{\nabla a}_{\infty} \quad \hbox{ on $ \overline{\Omega} \times \overline{\Omega} \times [0,1]$}$$
for some constant $C= C_{p,q}\diam(\Omega) \Big( \frac{ \norm{u}_{\infty}}{\min_{\overline{\Omega_{\delta}}} u}\Big)^{\frac{p-1-q}{p}} \Big( 2 + \frac{\diam(\Omega) \norm{\nabla a}_{\infty} }{\min_{\overline{\Omega_{\delta}}} a}\Big) \frac{1}{ \min_{\overline{\Omega_{\delta}}} a} >0$,
similarly to \cite{BuSq20}; notice anyway that the estimate involving $\osc(a)$ is much sharper, for example by considering fast oscillating weights $a(x)=\sin(k|x|)+2$ with $k\gg 0$.
 
The weakening of the regularity anyway is not only of technical interest: as a matter of fact, when studying bang-bang problems in dynamics of population \cite{FeVe22}, a discontinuous weight $a(x)$ appears naturally; the study of the shape of the solutions is thus strictly related to the shape of the portion where the resources are concentrated, and generally determining even the connectedness of this shape is an hard problem. See \cite{GMrS25} for some results in the parabolic counterpart of this problem.
 \end{remark}
 
In the paper we show also several results which are mainly based on perturbation techniques: in particular, we treat the singular case $q \in [-1,0)$ by an approximation process.
Moreover, by exploiting some uniform convergence to the eigenfunction problem, we show that if $q>p-1$ is sufficiently close to $p-1$, or $s\in (0,1)$ is sufficiently close to $1$, then the solutions of $-\Delta_p u_q = u_q^q$ and 
$$(-\Delta)^s_p u_s =\lambda_s u_s^{p-1}$$
actually enjoy some weaker form of log-concavity far from the boundary. 
Some comments on the literature of these topics and precise statements are given in the corresponding Sections \ref{sec_singular}--\ref{sec_fractional}. 
We highlight that some of the convergences provided have an interest on their own, and could be exploited for several other applications.

\medskip

We comment now the main difficulties of the paper.
The proofs of the main Theorems \ref{thm_main_conc_exact} and \ref{thm_approx_concav_harmonic} rely on a suitable approximation process; among other technical difficulties, differently from \cite{Ken85, Sak87, BMS22, BuSq20}, the co-presence of the approximating components (the ``$\eps>0$ parts'', see Section \ref{sec_approx_argu}) and the spatial-depending components (essentially $a(x)$) requires some fine management of the nonlinearities in play, in order to achieve the desired concavity and perturbed concavity results (see Section \ref{sec_gener_result}). 
Moreover, achieving the perturbed result by means of the approximation process will require also a suitable analysis of the boundary (see the proof of Theorem \ref{thm_gen_perturb_concav}).
In addition, a singular multiplicative decomposition (see \eqref{eq_singular_decompos}) will be employed to deal with the case $a(x) u^q$, $q \neq 0$. 
Finally, the particular choice of the additive decomposition of the nonlinearity (see \eqref{eq_particular_f_choice}) will also allow to unify some results contained in \cite{Ken85} and \cite{BMS22} (treating, for example, sum of powers $u^q+u^r$, see Corollary \ref{corol_sum_powers}). 
Regarding the singular case, the idea is to approximate the nonlinearity with $t \mapsto (t+\frac{1}{n})^q$ and pass to the limit the abstract result which holds for general $g$; this approach is indeed different and easier compared with the one proposed in the semilinear case by \cite{BGP99}; see Remark \ref{rem_comp_singular} for some comparison.

\medskip

\textbf{Outline of the paper:} 
in Section \ref{sec_prelimin} we recall some properties on approximation of domains, on $\alpha$-concave functions and concavity functions, together with a weaker notion of $\eps$-uniform concavity. 
In Section \ref{sec_diff_solutions} we provide some tools on the difference of two solutions in quasilinear frameworks: this topic is of independent interest, but it has some consequences related to concavity of functions. 
Then in Section \ref{sec_diff_bound} we present some general results regarding the maxima of concavity functions on the boundary or in the interior: some of the results are known, others are refinement of known results.
In Section \ref{sec_approx_argu} we develop an approximation argument for nonregular equations, which is suitable for achieving concavity properties: the discussion is set in a general framework, and some abstract consequences are presented in Section \ref{sec_gener_result}; then we provide the proofs of the main theorems in Section \ref{sec_applic}, together with some comments on perturbed concavity for eigenfunctions and the case of singular equations, where a second approximation argument is set in motion.
We collect then in Section \ref{sec_further_results} several additional results in other frameworks (superhomogeneous, $p$ large, fractional), which would be interesting to develop further in the future. 
We conclude the paper with a collection in Appendix \ref{sec_append_p_lapl} of some (partially known) tools for the $p$-Laplacian.

\medskip

\textbf{Notations:} 
we define $|x|^2:=|x|_2^2:= \sum_i x_i^2$, $|x|_1:=\sum_i |x_i|$, $\norm{u}_p^p:=\int_{\Omega} |u|^p $ for $p \in (1,+\infty)$, and $\norm{u}_{\infty}:= \textup{supess}_{x\in \Omega} |u(x)|$; $p^*:= \frac{Np}{N-p}$ for $p<N$ and $p^*:=\infty$ for $p\geq N$ will denote the Sobolev critical exponent. Moreover $[u]_{C^{0,\beta}}:= \sup_{x,y \in \Omega, \, x \neq y} \frac{|u(x)-u(y)|}{|x-y|^{\beta}}$ and $\norm{u}_{C^{0,\beta}}:= \norm{u}_{\infty} + [u]_{C^{0,\beta}}$ for $\beta \in (0,1]$, while $D^2 u$ denotes the Hessian matrix.
We define also $\textup{osc}(a):=\sup_{\Omega} a - \inf_{\Omega} a$, $d(x,\partial \Omega):= \inf_{y \in \partial \Omega} |x-y|$, $\textup{int}(\Omega)$ the interior of $\Omega$ and $\Omega_{\delta}:= \left\{ x \in \Omega \mid d(x,\partial \Omega) >\delta\right\}$; we say that $\Omega_n \to \Omega$ in Hausdorff distance if $\max\{\sup_{x \in \Omega} d(x,\Omega_n), \sup_{x \in \Omega_n} d(x,\Omega)\} \to 0$ as $n \to +\infty$. The normal vector to the boundary $\nu$ will be always assumed to pointing inward. 
Definitions of strong convex domains, (harmonic, joint) concavity functions, and of $\eps$-uniform concavity are given in Section \ref{sec_prelimin} (see also Section \ref{sec_concav_interior}); definition of ground state is given in Section \ref{sec_superhomog} while uniform ellipticity of operators is defined in \eqref{eq_strict_elliptic}.
Definitions of solution and of $\alpha$-concavity are given above (see also Section \ref{sec_prelimin}).

\section{Preliminary properties and definitions of concavity}
\label{sec_prelimin}

We start by recalling some known properties on convex sets (which are always Lipschitz \cite[Corollary 1.2.2.3]{Gri11}). 
We recall that a set is \emph{strictly convex} if $[x,y]\setminus \{x,y\} \subset \Omega$ for each $x,y \in \overline{\Omega}$, while it is \emph{strongly convex} if it is connected and the principal curvatures are well defined and strictly positive (see also \cite[Definition 3.1.2]{Ant24} for discussions on weak curvatures); we highlight that strong convexity implies strict convexity \cite[Proposition 6.1.6 and page 208]{KrPa99}. 
Moreover, for a general $C^2$ convex set, the curvatures are always nonnegative \cite[Corollary 2.1.28]{Hor94} (see also \cite[Proposition 6.18]{KrPa99}); the inequality cannot generally be improved even for strictly convex sets (consider for example $\{y>x^4\}$). 
We recall that any Lipschitz domain satisfies the uniform (interior and exterior) cone condition \cite[Proposition 2.4.4 and Theorem 2.4.7]{HePi18} (see also \cite[Theorem 1.2.2]{Gri11}), every domain $\Omega$ with $\partial \Omega \in C^{1,1}$ satisfies the uniform (interior and exterior) sphere condition \cite{LePe20}. 
For any $\delta>0$, we set
$$\Omega_{\delta}:= \left\{ x \in \Omega \mid d(x,\partial \Omega) >\delta\right\}$$
often called \emph{inner parallel bodies} or (referring to $\partial \Omega_{\delta}$) \emph{surfaces parallel to the boundary}. 
We recall that every convex domain satisfies the unique nearest point property \cite[Theorem 2.1.30]{Hor94}, while every domain $\Omega$ with $\partial \Omega \in C^2$ has a neighborhood $\Omega\setminus \Omega_{\delta}$ where this property holds \cite{Foo84}. 

We state some properties on $\Omega_{\delta}$ and some approximations of convex sets; see also \cite[Lemma 2.3.2]{Hor94}, \cite[Theorem 5.1]{BaZa17}, \cite[Theorem 5.1]{Dok76} and \cite[Theorem 3.2.1]{Ant24} for other relevant approximations. 
Recall that quasiconcavity of $v$ is equivalent to require
\begin{equation}\label{eq_def_quasiconc}
v\big(\lambda x + (1-\lambda) y\big) \geq \min \big\{v(x),v(y)\}
\end{equation}
for each $x,y \in \Omega$, $\lambda \in [0,1]$.

\begin{proposition}[Domain approximation and distance function]
\label{prop_boundar_approximat}
The following properties hold.
\begin{itemize}
\item[(i)]
Let $\Omega \subset \R^N$ be open, bounded and convex. Then there exists $(\Omega^k)_{k\in \N}$, $\Omega^k \subseteq \Omega$, strongly convex, with $\partial \Omega^k \in C^{\infty}$, and such that $\Omega^{k} \subset \Omega^{k+1}$, $\Omega = \bigcup_k \Omega^k$ and $\Omega^k \to \Omega$ in Hausdorff distance as $k\to +\infty$.
\item[(ii)] Let $\Omega \subset \R^N$ be open, bounded, and convex [resp. strictly convex]. 
Then the distance function $d(\cdot, \partial\Omega)$ is Lipschitz and concave [resp. also strictly quasiconcave].%
\footnote{We observe that the distance function is not strictly concave (even if $\Omega$ is strictly convex): for instance consider $x_1$, $x_2$ sufficiently close to $\partial \Omega$, and such that both the points lie on the same perpendicular line to the boundary (namely, if $d(x_i, \partial \Omega)=|x_i- x_i^*|$ then $x_1^*=x_2^*=:x^*$); then clearly $\bar{x} = \frac{x_1+x_2}{2}$ is such that $d(\bar{x}, \partial \Omega) = |\bar{x}-x^*| = \frac{d(x_1, \partial \Omega) + d(x_2,\partial \Omega)}{2}$.
} 
As a consequence, for any $\delta>0$, $\Omega_{\delta}$ is convex [resp. strictly convex].

\item[(iii)] Let $\Omega \subset \R^N$ be open and bounded, and let $k \in \N \cup \{\infty\}$. 
If $\partial \Omega \in C^k$, $k \geq 2$ then $d(\cdot, \partial \Omega) \in C^k(\Omega \setminus \Omega_{\delta_0})$ for some $\delta_0$;%
\footnote{We notice that a viceversa does not hold: a set satisfying $d(\cdot, \partial \Omega) \in C^k(\Omega \setminus \Omega_{\delta_0})$ is also known as \emph{proximally $C^k$}.} 
this means that, for $\delta>0$ sufficiently small, $\partial \Omega_{\delta} \in C^k$ (and $\Omega_{\delta}$ can be seen as a smooth manifold).
The same holds for $k=1$ by assuming in addition that $\partial \Omega$ satisfies the unique nearest point property (e.g., $\Omega$ is convex) and for a.e. $\delta>0$ small.%
\footnote{The size $\delta$ depends on the radius of the uniform interior sphere, or the \emph{positive reach} of $M$, that is the size of the neighborhood where the unique nearest point property holds. The set of singular points of $d(\cdot, \partial\Omega)$ is also called the \emph{ridge} of $\Omega$.}
Similar statements hold also for $C^{k,\alpha}$.
\end{itemize}
\end{proposition}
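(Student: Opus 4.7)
The plan is to handle the three assertions in turn. Part (i) is the main technical step---one must simultaneously smooth the boundary and impose strong convexity while maintaining monotonicity, containment in $\Omega$, and Hausdorff convergence; parts (ii) and (iii) are essentially classical, though the strict-quasiconcavity statement in (ii) needs a short extremal-point argument (and the footnote records precisely why strict concavity cannot be achieved).

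For part (i), I would first apply (ii) to note that each inner parallel body $\Omega_{1/k}$ is open, bounded and convex, with $\Omega_{1/k} \subset \Omega$ and $\Omega_{1/k} \to \Omega$ in Hausdorff distance. To upgrade regularity, consider the Minkowski sum $\tilde{\Omega}^k := \Omega_{2/k} + \overline{B_{1/k}(0)}$: this is contained in $\Omega_{1/k} \subset \Omega$ by the triangle inequality, convex as a Minkowski sum of convex sets, and has $C^{1,1}$ boundary whose principal radii of curvature are bounded below by $1/k$, hence is strongly convex. To reach $C^{\infty}$ regularity, I would smooth the support function $h_{\tilde{\Omega}^k}$ on $S^{N-1}$ by convolution with a spherical mollifier (Firey--Schneider type smoothing) at a sufficiently small scale, adding if needed a small multiple of the support function of a ball to preserve strict positivity of the principal radii. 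Choosing the smoothing and perturbation parameters small enough at each step produces a sequence $(\Omega^k)_k \subset \Omega$ that, after extraction, is monotone and Hausdorff-convergent to $\Omega$.

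For part (ii), Lipschitz continuity of $d(\cdot, \partial \Omega)$ is immediate from the triangle inequality. For concavity I would use the representation $d(x, \partial \Omega) = \sup\{r \geq 0 : \overline{B_r(x)} \subset \overline{\Omega}\}$ together with the Minkowski identity $\lambda \overline{B_{d_1}(x_1)} + (1-\lambda) \overline{B_{d_2}(x_2)} = \overline{B_{\lambda d_1 + (1-\lambda) d_2}(\lambda x_1 + (1-\lambda) x_2)}$ and convexity of $\overline{\Omega}$. For strict quasiconcavity under strict convexity, I would argue by contradiction: given $x_1 \neq x_2$ in $\Omega$ and $\lambda \in (0,1)$, suppose $d(\bar{x}, \partial \Omega) = \min(d_1, d_2)$ with $\bar{x} := \lambda x_1 + (1-\lambda) x_2$. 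The concavity inequality forces $d_1 = d_2 =: d$; pick $\bar{p} \in \partial B_d(\bar{x}) \cap \partial \Omega$, write $\bar{p} = \bar{x} + d \hat{n}$ for a unit vector $\hat{n}$, and set $p_i := x_i + d \hat{n}$. Then $|p_i - x_i| = d$ yields $p_i \in \overline{B_d(x_i)} \subset \overline{\Omega}$, while $\bar{p} = \lambda p_1 + (1-\lambda) p_2$. Strict convexity of $\Omega$ (no boundary point is a proper convex combination of distinct points of $\overline{\Omega}$) forces $p_1 = p_2$, whence $x_1 = x_2$, a contradiction. The claims on $\Omega_\delta$ then follow because superlevel sets of concave (resp.\ strictly quasiconcave) functions are convex (resp.\ strictly convex).

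For part (iii), I would work in normal coordinates on a tubular neighborhood of $\partial \Omega$. When $\partial \Omega \in C^k$ with $k \geq 2$, the inward unit normal $\nu$ is of class $C^{k-1}$, and the map $\Psi(p,t) := p + t \nu(p)$ has invertible differential along $\{t = 0\}$; the inverse function theorem yields a $C^{k-1}$ diffeomorphism from $\partial \Omega \times (-\delta_0, \delta_0)$ onto $\Omega \setminus \Omega_{\delta_0}$ (for some $\delta_0 > 0$), in which $d(x, \partial \Omega) = t(x)$. A bootstrap based on the eikonal identity $|\nabla d|^2 \equiv 1$ then upgrades the regularity of $d$ from $C^{k-1}$ to $C^k$ (and analogously to $C^{k,\alpha}$ via Schauder-type estimates). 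For $k = 1$ with the unique nearest point property, $d$ is $C^1$ on the neighborhood where the projection is single-valued and continuous; since $|\nabla d| = 1$ there, the implicit function theorem combined with Sard's theorem gives that $\partial \Omega_\delta = d^{-1}(\delta)$ is a $C^1$ manifold for almost every small $\delta$.
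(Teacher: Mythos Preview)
Your arguments for (ii) and (iii) are essentially the paper's: the same ball-inclusion/Minkowski idea for concavity, the same ``translate a boundary-touching direction'' trick for strict quasiconcavity (the paper does it directly rather than by contradiction, but the geometry is identical), and the same tubular-neighborhood/eikonal/Sard package for (iii), where the paper cites Gilbarg--Trudinger Lemma~14.16 and Foote.

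For (i) your route differs from the paper's and contains a slip. The paper simply invokes the existence of a smooth \emph{strongly concave} exhaustion function on a bounded convex domain (Krantz--Parks, B{\l}ocki) and takes $\Omega^k$ to be its superlevel sets; $C^\infty$ boundary, strong convexity, monotonicity and $\Omega^k\subset\Omega$ all come for free. In your construction the claim that $\tilde\Omega^k=\Omega_{2/k}+\overline{B_{1/k}}$ is strongly convex is false: the interior tangent ball of radius $1/k$ bounds the principal curvatures \emph{above} (equivalently, radii of curvature \emph{below}), which is the wrong inequality --- a square plus a small disk still has flat sides. Your subsequent step (Schneider-type smoothing of the support function and then adding a small ball) \emph{does} yield $C^\infty$ strongly convex bodies, since for a smooth convex $K$ the form $(h_K)_{ij}+h_K\delta_{ij}$ is nonnegative and adding a constant $\epsilon$ to $h_K$ shifts it by $\epsilon\delta_{ij}$; so your overall strategy is salvageable once the incorrect intermediate assertion about $\tilde\Omega^k$ is dropped. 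The trade-off is that the exhaustion-function route gives monotonicity and containment automatically, whereas your support-function construction needs extra parameter-tuning at each step to arrange $\Omega^k\subset\Omega^{k+1}\subset\Omega$.
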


\begin{proof}
Being the properties well known, we give just a sketch of the proof. 

Claim (i) is given in \cite[Corollary 6.3.10]{KrPa99} by considering $\Omega^k$ as the preimages of a suitable smooth, strongly concave exhaustion function (see also \cite[Theorem 2.3]{Blo97}, and \cite[Theorem 1.4]{Lie85} for regularized distance function arguments).
We obtain thus that $\Omega^k$ are increasing and cover $\Omega$, and basic properties imply the Hausdorff convergence (see e.g. \cite[Section 2.2.3.2]{HePi18}).

Claim (ii) follows by \cite[Theorem 2.1.24]{Hor94}, \cite[Theorem 5.4]{DeZo94}. 
We give some details on the strict quasiconcavity: consider $x_1, x_2 \in \overline{\Omega}$, $x_1 \neq x_2$, and set $d_i:=d(x_i, \partial \Omega)\geq 0$, $\bar{x}:= \frac{x_1+x_2}{2} \in \Omega$, and $\bar{d}:=\frac{d_1+d_2}{2}\geq 0$. Fix now a direction $\xi \in S^{N-1}$, and set $y_i^{\xi}:= x_i + d_i \xi \in \overline{\Omega}$, $\bar{y}^{\xi}:=\frac{y_1+y_2}{2} =\bar{x} + \bar{d}\xi$. 
By convexity, we have $\bar{y}^{\xi} \in \overline{\Omega}$, thus varying $\xi$ we obtain $B(\bar{x}, \bar{d}) \subset \overline{\Omega}$, which implies $d(\bar{x}, \partial \Omega) \geq \bar{d}$; this gives mid-concavity (and thus concavity). We now distinguish:
\begin{itemize}
\item $d_1 \neq d_2$: in this case $d(\bar{x}, \partial \Omega) \geq \bar{d} > \min\{d_1,d_2\}$.
\item $d_1 = d_2$: in this case $y_1^{\xi} \neq y_2^{\xi}$ for each $\xi$ (otherwise $x_1 - x_2 = (d_1 - d_2) \xi = 0$, impossible), thus by strict convexity $\bar{y}^{\xi} \in \Omega$, that is $B(\bar{x}, \bar{d}) \subset \Omega$,
and hence $d(\bar{x}, \partial \Omega) > \bar{d} = \min\{d_1,d_2\}$.
\end{itemize}
In both cases we have 
$$d\left(\tfrac{x_1+x_2}{2}, \partial \Omega\right) > \min\left\{d(x_1,\partial \Omega), d(x_2, \partial \Omega)\right\}$$
that is, strict mid-quasiconcavity, which implies strict quasiconcavity, and in particular strict convexity of the level sets \cite[Corollary 3.36]{ADSZ10}.

For (iii), when $k \geq 2$, we rely on \cite[Lemma 14.16]{GiTr01}, while for $k=1$ we rely on \cite{Foo84}. 
See also \cite{KrPa81} for $C^{k,\alpha}$ conclusions ($k\geq 2$) and \cite[Theorem 5.7]{DeZo94} for $C^{1,1}_{loc}$ conclusions.

By the end of the proof of \cite[Lemma 14.16]{GiTr01} we also see that the distance is \emph{proper}, that is for small $\delta <\delta_0 $ we have that $d(x,\partial \Omega) < \delta$ implies $\nabla d(\cdot, \partial \Omega) \neq 0$; 
when $k=1$ we rely instead on Sard's lemma.
By exploiting the implicit function theorem (as in the proof of \cite[Theorem 5.1]{BaZa17}, see end of page 13 therein), we achieve that, for such $\delta$, $\partial \Omega_{\delta} \in C^k$; 
see also the preimage theorem \cite[Corollary in Section 1.2.2, page 37]{Maz11} in combination with \cite[Theorem 1.2.1.5]{Gri11} or \cite[Section 1.2]{KrPa99}. 
For Lipschitz boundaries see \cite[Theorem 4.1]{Dok76}. 
\end{proof}

We recall some miscellanea results by \cite[Properties 7 and 8, Lemma A.2]{Ken85} and \cite[Lemma 3.2]{Ken88} (see also \cite[Theorems 1 and 3, Corollary 2]{Lin81}). 
We say that a function $h: \Lambda\subset \R^N \to \R$ (not necessarily positive) is \emph{harmonic concave} if, for each $z_1,z_2 \in \Lambda$, $\lambda \in [0,1]$, 
$$ h\big(\lambda z_1 + (1-\lambda)z_2 \big) \geq \begin{cases}
\frac{h(z_1) h(z_2)}{\lambda h(z_2) + (1-\lambda) h(z_1)} &\quad \hbox{if $\lambda h(z_2) + (1-\lambda) h(z_1)>0$}, \\
 0 &\quad \hbox{if $h(z_1)=h(z_2)=0$}.
\end{cases}$$
If $h$ is positive, this definition coincides with $(-1)$-concavity, see \cite{Ken85}.

\begin{proposition}[Power concavity properties]
\label{prop_propriet_concav_varie}
We have the following properties.
\begin{itemize}
\item Let $h>0$ be $\alpha$-concave, and $k>0$ be $\beta$-concave, with $\alpha, \beta \in [0,+\infty]$. Then $hk$ is $\gamma$-concave, where $\frac{1}{\gamma}=\frac{1}{\alpha}+\frac{1}{\beta}$.
\item Let $\Omega \subset \R^N$ be convex and $I\subset \R$ be an interval, and let $h=h(x,t)$ be positive and such that $(x,t) \in \Omega \times I \mapsto t^2 h(x,t)$ is (jointly) concave. Then $h$ is (jointly) harmonic concave.
 \item If $h,k$ are $\alpha$-concave functions for $\alpha \geq 1$, then $h+k$ is $\alpha$-concave. 
 If $h$ is harmonic concave, then $h-k$ is harmonic concave for every constant $k\geq 0$.
 \item Let $h=h(x)$ be $\alpha$-concave, then $(t,x) \mapsto h(x)$ is $\alpha$ (jointly) concave; similarly if $h=h(t)$.
 \end{itemize}
\end{proposition}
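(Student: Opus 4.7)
The plan is to verify the four items sequentially, each following from the definitions combined with one classical inequality (Hölder, Minkowski, or an elementary algebraic rearrangement).

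Item (iv) is the most immediate: the convex combination $\lambda(t_1,x_1) + (1-\lambda)(t_2,x_2)$ projects onto the convex combination $\lambda x_1 + (1-\lambda) x_2$ in the $x$-variable, so the $\alpha$-concavity inequality for $h$ on $\Omega$ lifts to $I \times \Omega$ in every regime $\alpha \in \R \cup \{\pm\infty\}$ by checking each of the four cases separately. For item (i), writing $M_r^\lambda(a,b) := (\lambda a^r + (1-\lambda) b^r)^{1/r}$ for the weighted $r$-power mean (with the natural conventions at $r \in \{0, \pm\infty\}$), the hypotheses read $h(z_\lambda) \geq M_\alpha^\lambda(h(z_1), h(z_2))$ and $k(z_\lambda) \geq M_\beta^\lambda(k(z_1), k(z_2))$. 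The claim reduces to the multiplicative inequality for power means,
$$M_\alpha^\lambda(a_1,b_1)\, M_\beta^\lambda(a_2,b_2) \;\geq\; M_\gamma^\lambda(a_1 a_2,\, b_1 b_2), \qquad \tfrac{1}{\gamma} = \tfrac{1}{\alpha} + \tfrac{1}{\beta},$$
which for $\alpha, \beta \in (0,+\infty)$ follows from Hölder applied with exponents $p = \alpha/\gamma$, $q = \beta/\gamma$ (so $1/p + 1/q = 1$) to the two-point measure $\lambda \delta_1 + (1-\lambda)\delta_2$; the boundary cases $\alpha$ or $\beta \in \{0, +\infty\}$ are recovered by continuity or direct inspection.

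Item (ii) is the main algebraic point. Fixing $(x_i, t_i) \in \Omega \times I$, setting $h_i := h(x_i,t_i) > 0$, and writing $(x_\lambda, t_\lambda) := \lambda(x_1,t_1) + (1-\lambda)(x_2,t_2)$, the joint concavity of $t^2 h$ at these two points gives
$$t_\lambda^2\, h(x_\lambda, t_\lambda) \;\geq\; \lambda t_1^2 h_1 + (1-\lambda) t_2^2 h_2.$$
I then combine this with the elementary identity
$$\bigl(\lambda t_1^2 h_1 + (1-\lambda) t_2^2 h_2\bigr)\bigl(\lambda h_2 + (1-\lambda) h_1\bigr) - h_1 h_2 \bigl(\lambda t_1 + (1-\lambda) t_2\bigr)^2 \;=\; \lambda(1-\lambda)\bigl(t_1 h_1 - t_2 h_2\bigr)^2 \;\geq\; 0,$$
which yields $h(x_\lambda, t_\lambda) \geq \frac{h_1 h_2}{\lambda h_2 + (1-\lambda) h_1}$, i.e.\ joint harmonic concavity of $h$. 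Spotting this Cauchy--Schwarz-type identity is the trickiest moment of the entire proposition; once found, the argument is one line of factorization.

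For item (iii), the additive part is Minkowski's inequality in $\ell^\alpha$ ($\alpha \geq 1$) applied to the two-point measure $\lambda \delta_{z_1} + (1-\lambda)\delta_{z_2}$, giving
$$\bigl(\lambda (h(z_1)+k(z_1))^\alpha + (1-\lambda)(h(z_2)+k(z_2))^\alpha\bigr)^{1/\alpha} \leq \bigl(\lambda h(z_1)^\alpha + (1-\lambda) h(z_2)^\alpha\bigr)^{1/\alpha} + \bigl(\lambda k(z_1)^\alpha + (1-\lambda) k(z_2)^\alpha\bigr)^{1/\alpha},$$
and the right side is bounded by $h(z_\lambda) + k(z_\lambda)$ by the $\alpha$-concavity hypotheses, proving $\alpha$-concavity of $h+k$. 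For the shifted harmonic case, on the region $\{h > k\}$ I would use the representation $\frac{1}{h-k} = \frac{\phi}{1-k\phi}$ with $\phi := 1/h$ convex (by harmonic concavity of $h$), observing that $y \mapsto y/(1-ky)$ is convex and increasing on $[0, 1/k)$, so that $1/(h-k)$ is convex as the composition of a convex increasing function with a convex function. The zero-clause case $h(z_1) = h(z_2) = k$ reduces to harmonic concavity of $h$ forcing $h(z_\lambda) \geq \frac{k^2}{k} = k$, hence $(h-k)(z_\lambda) \geq 0$; the remaining cases in the piecewise definition impose no constraint to verify. The main obstacle is the identity in item (ii); the rest follows cleanly from Hölder, Minkowski, and composition of convex functions.
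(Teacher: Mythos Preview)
The paper does not prove this proposition; it simply recalls the results with citations to \cite{Ken85}, \cite{Ken88}, and \cite{Lin81}. Your write-up therefore supplies what the paper omits, and items (i), (ii), and (iv) are handled cleanly --- the Cauchy--Schwarz-type identity you isolate in (ii) is exactly the right observation, and the H\"older/Minkowski reductions in (i) and (iii) are standard and correct.

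There is, however, a genuine gap in your treatment of the second half of item (iii). Your composition argument for the convexity of $1/(h-k)$ is valid only at pairs $z_1,z_2$ with $h(z_1),h(z_2)>k$, and you then dispose of the case $h(z_1)=h(z_2)=k$ separately. But the first clause of the harmonic-concavity definition for $h-k$ is triggered whenever $\lambda(h(z_2)-k)+(1-\lambda)(h(z_1)-k)>0$, and this can occur with, say, $0<h(z_1)\le k<h(z_2)$; that situation \emph{does} impose a constraint, and neither of your two arguments covers it. The fix is a direct computation: assuming $h>0$, writing $a=h(z_1)$, $b=h(z_2)$, $D=\lambda b+(1-\lambda)a>0$, harmonic concavity of $h$ gives $h(z_\lambda)\ge ab/D$, and one verifies the algebraic identity
\[
\frac{ab}{D}-k \;-\; \frac{(a-k)(b-k)}{D-k} \;=\; \frac{k\,\lambda(1-\lambda)(a-b)^2}{D(D-k)} \;\ge\; 0
\]
whenever $D-k>0$, which is exactly the first-clause condition for $h-k$. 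This single line replaces the composition argument and covers all sign configurations at once.
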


\smallskip

We consider now the \emph{concavity function}, for a $v:\Omega \to \R$, as 
$$\mc{C}_{v}(x,y,\lambda):= \lambda v(x) + (1-\lambda) v(y) - v\big( \lambda x + (1-\lambda) y\big),$$
the \emph{joint concavity function}, for a $h=h(x,t): \Omega \times \R \to \R$, as
$$\mc{JC}_{h}\big((x,t),(y,s),\lambda\big):= \lambda h(x,t) + (1-\lambda) h(y,s) - h\big(\lambda x + (1-\lambda)y, \lambda t + (1-\lambda)s \big),$$
and, when $h>0$, the \emph{(jointly) harmonic concavity function} as
$$\mc{HC}_{h}\big((x,t),(y,s),\lambda\big):=\frac{h(x,t) h(y,s)}{\lambda h(y,s) + (1-\lambda) h(x,t)} - h\big(\lambda x + (1-\lambda)y, \lambda t + (1-\lambda)s \big).$$
Notice that \cite{BuSq20} 
$$\mc{HC} \leq \mc{JC}.$$
We recall some relations on $\mc{HC}$ which allows to deal with basic operations \cite[Lemma A.1]{BuSq20}.

\begin{proposition}[\cite{BuSq20}]
\label{prop_diff_harm_concav}
Let $f,g$ such that $f,g>0$. Then, for any $x,y \in \Omega$, $t, s \in \R$, $\lambda \in [0,1]$,
$$\mc{HC}_{f+g}\big((x,t),(y,s),\lambda\big) \geq \mc{HC}_f\big((x,t),(y,s),\lambda\big) + \mc{HC}_g\big((x,t),(y,s),\lambda\big)$$
and, if moreover $f-g>0$,
$$\mc{HC}_{f-g}\big((x,t),(y,s),\lambda\big) \leq \mc{HC}_f\big((x,t),(y,s),\lambda\big) - \mc{HC}_g\big((x,t),(y,s),\lambda\big).$$
\end{proposition}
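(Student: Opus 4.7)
The plan is to strip away the evaluations of $f,g,f\pm g$ at the convex combination and reduce both inequalities to a single algebraic fact about the weighted harmonic mean. Set $a := f(x,t)$, $a':=f(y,s)$, $b:=g(x,t)$, $b':=g(y,s)$ (all positive, with $a-b,a'-b'>0$ for the second inequality) and, for fixed $\lambda\in[0,1]$, define on the positive quadrant
\[
H(u,v) \;:=\; \frac{uv}{\lambda v + (1-\lambda)u}\,.
\]
With this notation $\mc{HC}_f$, $\mc{HC}_g$, $\mc{HC}_{f\pm g}$ at the given triple are respectively $H(a,a')-F$, $H(b,b')-G$, $H(a\pm b,a'\pm b')-(F\pm G)$, where $F,G$ denote the values of $f,g$ at the convex combination of the two base points. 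These $F$ and $G$ cancel in both claimed inequalities, so the first reduces to the \emph{superadditivity}
\[
H(a+b,\,a'+b') \;\geq\; H(a,a') + H(b,b'),
\]
while the second is equivalent to $H(a-b,\,a'-b') + H(b,b') \leq H(a,a')$, which is the same superadditivity applied to $u_1=a-b,\ u_2=b,\ v_1=a'-b',\ v_2=b'$.

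Hence the whole proposition collapses to showing that $H$ is superadditive on $(0,+\infty)^2$. The cleanest route I would take is to note two elementary facts about $H$: (i) homogeneity of degree one, $H(cu,cv)=c\,H(u,v)$ for $c>0$, which is immediate; and (ii) concavity on $(0,+\infty)^2$, which I would verify by computing the Hessian and checking that, with $D := \lambda v + (1-\lambda)u$,
\[
D^2 H(u,v) \;=\; -\,\frac{2\lambda(1-\lambda)}{D^{3}}\begin{pmatrix} v^2 & -uv \\ -uv & u^2 \end{pmatrix},
\]
a rank-one negative semidefinite matrix (trace $\leq 0$, determinant $=0$). Once (i)--(ii) are in place, the standard argument that every concave positively homogeneous function on a convex cone is superadditive yields
\[
H(z_1+z_2) \;=\; 2\,H\!\left(\tfrac{z_1+z_2}{2}\right) \;\geq\; 2\left[\tfrac{1}{2}H(z_1) + \tfrac{1}{2}H(z_2)\right] \;=\; H(z_1)+H(z_2),
\]
which completes the proof.

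The main obstacle is conceptual rather than technical: recognizing that both the additive and the subtractive parts of the statement are instances of the same superadditivity property of $H$, and choosing a presentation in which superadditivity follows transparently (either from concavity $+$ homogeneity, as above, or alternatively by clearing denominators and writing the resulting numerator as $P_1 P_2(\sqrt{u_1 v_2}-\sqrt{u_2 v_1})^2 - (\sqrt{u_1 v_1}P_2 - \sqrt{u_2 v_2}P_1)^2$ with $P_i=\lambda v_i+(1-\lambda)u_i$, and then using the factorization $\sqrt{u_1 v_1}P_2 - \sqrt{u_2 v_2}P_1 = (\lambda\sqrt{v_1 v_2}-(1-\lambda)\sqrt{u_1 u_2})(\sqrt{u_1 v_2}-\sqrt{u_2 v_1})$ together with the identity $P_1 P_2 - (\lambda\sqrt{v_1 v_2}-(1-\lambda)\sqrt{u_1 u_2})^2 = \lambda(1-\lambda)(\sqrt{u_2 v_1}+\sqrt{u_1 v_2})^2$). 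I prefer the Hessian route since it avoids this algebraic manipulation entirely.
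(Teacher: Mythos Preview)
Your proof is correct. The reduction to superadditivity of the weighted harmonic mean $H(u,v)=uv/(\lambda v+(1-\lambda)u)$ is exactly right, the cancellation of the evaluation terms $F,G$ is clean, and the concavity-plus-homogeneity route is the natural way to establish superadditivity; your Hessian computation is accurate (rank-one, negative semidefinite, and the boundary cases $\lambda\in\{0,1\}$ collapse to linear maps, where superadditivity is trivial).

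Note, however, that the paper does not supply its own proof of this proposition: it is quoted verbatim from \cite[Lemma~A.1]{BuSq20} and simply stated as a recalled fact. So there is no in-paper argument to compare against. Your write-up would serve as a perfectly adequate self-contained proof, and the Hessian route is indeed cleaner than clearing denominators.
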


\smallskip

We finally recall some definitions of generalized concavity and some related properties (see also Proposition \ref{prop_large_p} for quasiconcavity). 
\begin{definition}[$\eps$-concavity]
\label{def_unif_strong_concav}
Let $\eps>0$. We say that $v: \Omega \to \R$ is \emph{$\eps$-uniformly concave} with continuous modulus $\rho: (0,+\infty) \to (0,+\infty)$ if 
$$v\left(\frac{x+y}{2}\right)\geq \frac{v(x)+v(y)}{2} + \rho(|x-y|) \qquad \hbox{for each 
$x,y \in \Omega$, $|x-y|\geq \eps$},$$
namely $\mc{C}_v(x,y,\frac{1}{2}) \leq - \rho(|x-y|)$ for $|x-y|\geq \eps$. We say that $v$ is \emph{$\eps$-strongly concave} with parameters $m>0$ if $\rho(t) = \frac{1}{8} m t^2$.%
\footnote{The ``$\lambda$-version'' actually reads as: $v(\lambda x + (1-\lambda) y) \geq \lambda v(x) + (1-\lambda) v(y) + \frac{1}{2} \lambda(1-\lambda) m |x-y|_2^2$.
}
If $\rho \equiv const$ we simply say that $v$ is $\eps$-uniformly concave (notice that the value of $\rho$ depends a priori on $\eps$). Finally, we say that $v$ is \emph{uniformly concave} if it is $\eps$-uniformly concave for each $\eps>0$.
\end{definition}
We highlight that an $\eps$-uniform concave function is \emph{not} necessarily concave \cite[Example 2.5]{GrRa22}; anyway this class of functions enjoy several properties, see again \cite{GrRa22}. 
Clearly uniform concavity implies concavity (by assuming $v$ continuous).
Similarly, strong concavity implies both $\eps$-strong concavity and strict concavity.

We are interested in properties inherited by a sequence of functions from their limit: we observe that, if $v_n \to v$ in $C^k(\Omega)$ and $v$ is strictly convex, then it is not ensured that $v_n$ is strictly convex for $n$ large. Indeed, consider 
$v_n(x):=\begin{cases} x^{2k} & \hbox{ for $x \notin [-\frac{1}{n}, \frac{1}{n}]$}, \\ -x^{2k} + \frac{2}{n^{2k}} & \hbox{ for $x \in [-\frac{1}{n},\frac{1}{n}]$}, \end{cases}$ and $v(x):=x^{2k};$
clearly (by substituting $v_n$ with a smooth mollification) $v_n \to v$ in $C^{2k-1}([-1,1])$ and $v$ is strictly convex, but $v_n$ are not even convex. 
With the same example, we see that for $k=1$ $v_n \to v$ in $C^1([-1,1])$ (but not in $C^2([-1,1])$) and $v(x)=x^2$ is strongly concave; thus $C^1$ convergence is not sufficient to inherit convexity from strong convexity.

On the other hand, by looking at the Hessian matrix, it is clear the following result; we recall that, when $v \in C^2(\Omega)$, strong concavity is equivalent to say that $D^2 v \succcurlyeq m I$, that is, $D^2 v - m I$ is positive semi-definite (i.e., the eigenvalues of $D^2v$ lie in $[m,+\infty)$).

\begin{proposition}
\label{prop_strong_conc}
Assume $v_n : \Omega \to \R$ converge in $C^2(\Omega)$ to $v$, which is assumed to be strongly concave on $\Omega$. 
Then $v_n$ is strongly concave for $n$ sufficiently large. 
\end{proposition}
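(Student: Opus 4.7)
The plan is to reduce the claim to the pointwise Hessian characterization of strong concavity recalled just before the statement: a $C^2$ function $w$ is strongly concave on $\Omega$ with parameter $m > 0$ precisely when $D^2 w(x)$ is uniformly definite, with all its eigenvalues separated from $0$ by $m$ for every $x \in \Omega$. Once one passes to Hessians, the problem becomes a simple spectral perturbation question for symmetric matrices, so no PDE ingredient is needed at all.

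Concretely, I would first fix $m > 0$ such that $v + \tfrac{m}{2}|x|_2^2$ is concave, i.e.\ every eigenvalue of $D^2 v(x)$ is at most $-m$ uniformly in $x \in \Omega$. Since $v_n \to v$ in $C^2(\Omega)$, the Hessians $D^2 v_n$ converge to $D^2 v$ uniformly on $\Omega$, so there exists $n_0$ with $\|D^2 v_n(x) - D^2 v(x)\| < m/2$ in operator norm for all $x \in \Omega$ and all $n \geq n_0$. A standard Weyl-type perturbation inequality for symmetric matrices (every eigenvalue moves by at most the operator norm of the perturbation, once the spectra are ordered monotonically) then ensures that every eigenvalue of $D^2 v_n(x)$ is bounded above by $-m/2$ throughout $\Omega$, which is precisely strong concavity of $v_n$ with parameter $m/2$.

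The only mild obstacle worth flagging is that the conclusion requires \emph{uniform} control of $D^2 v_n - D^2 v$ across $\Omega$, as a single $m/2$ must work everywhere; this is built into the statement under the natural reading of $C^2(\Omega)$ convergence. If one had only $C^2_{\mathrm{loc}}$ convergence (for instance when $\Omega$ is unbounded or the convergence degenerates near $\partial\Omega$), the very same argument localizes verbatim and yields strong concavity of $v_n$ on every compact subset $\Omega' \Subset \Omega$, which is in any case what is used in the later applications where the argument is applied on inner parallel bodies $\Omega_{\delta}$.
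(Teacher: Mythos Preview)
Your proposal is correct and follows exactly the approach the paper indicates: the paper's entire proof is the one-line remark ``by looking at the Hessian matrix, it is clear the following result,'' and your argument via the Hessian characterization of strong concavity together with a Weyl-type eigenvalue perturbation bound is precisely the way to unpack that sentence.
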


A weaker property, anyway, is inherited by the sequence also in case of $L^{\infty}$-convergence, when $v$ is assumed strictly concave.

\begin{proposition}
\label{prop_eps_strong_conc}
We have the following properties.
\begin{itemize}
\item Assume $\Omega$ bounded and $v: \overline{\Omega} \to \R$ strictly concave, then $v$ is uniformly concave. 
\item Assume $v_n : \Omega \to \R$ converge in $L^{\infty}(\Omega)$ to $v$, which is assumed uniformly [resp. strongly] concave in $\overline{\Omega}$. Then, for each $\eps>0$, there exists $n_{\eps}\gg 0$ such that, for each $n\geq n_{\eps}$, $v_n$ is $\eps$-uniformly [resp. $\eps$-strongly] concave.
\end{itemize}
\end{proposition}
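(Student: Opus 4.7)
The plan is to reduce both assertions to compactness considerations on the configuration set $K_\eps := \{(x,y) \in \overline{\Omega}\times\overline{\Omega} : |x-y| \geq \eps\}$, combined with the $L^\infty$ control in part (ii).

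For the first bullet, I would fix $\eps > 0$ and look at the continuous map
$$\Phi(x,y) \,:=\, v\bigl(\tfrac{x+y}{2}\bigr) - \tfrac{v(x)+v(y)}{2} \,=\, -\mc{C}_v\bigl(x,y,\tfrac{1}{2}\bigr).$$
Strict concavity forces $\Phi(x,y) > 0$ whenever $x \neq y$, and in particular on all of $K_\eps$. Compactness of $K_\eps$ together with continuity of $v$ (and hence of $\Phi$) then yields a strictly positive minimum $c_\eps := \min_{K_\eps} \Phi$, which serves as a constant modulus witnessing the $\eps$-uniform concavity of $v$.

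For the second bullet, set $\delta_n := \|v_n - v\|_\infty \to 0$ and note that the elementary three-term estimate
$$v_n\bigl(\tfrac{x+y}{2}\bigr) - \tfrac{v_n(x)+v_n(y)}{2} \,\geq\, v\bigl(\tfrac{x+y}{2}\bigr) - \tfrac{v(x)+v(y)}{2} - 2\delta_n$$
holds for every $x,y \in \Omega$. In the uniformly concave case, with $\rho$ the continuous positive modulus of $v$, I would observe that since $\Omega$ is bounded the restriction of $\rho$ to $[\eps,\diam(\Omega)]$ attains a positive minimum $c_\eps$; choosing $n_\eps$ so large that $2\delta_n \leq c_\eps/2$ for $n \geq n_\eps$, the right-hand side dominates $c_\eps/2$ whenever $|x-y|\geq \eps$, giving $\eps$-uniform concavity of $v_n$. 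In the strongly concave case with parameter $m$, the same inequality reads $\geq \tfrac{m}{8}|x-y|^2 - 2\delta_n$, and on $\{|x-y| \geq \eps\}$ the additive error $2\delta_n$ can be absorbed (for $n$ large) into a fraction of the quadratic term; this gives $\eps$-strong concavity of $v_n$ with parameter, say, $m/2$.

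The only mildly delicate point is ensuring the continuity of $v$ up to $\partial\Omega$ in part (i), since concavity on a compact convex set only grants continuity in the relative interior; however in the paper's applications $v$ arises as a limit of functions continuous on $\overline{\Omega}$, and one can alternatively replace $\overline{\Omega}$ by $\overline{\Omega_\delta}$ for small $\delta$ or extend $v$ to a slightly larger convex set. Beyond this, the argument is the standard one: extract a quantitative gap from a positive continuous function on a compact set, and then absorb the $L^\infty$ perturbation into a strictly smaller modulus.
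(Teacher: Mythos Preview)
Your proof is correct and follows essentially the same route as the paper: both arguments define the compact set $\Lambda_\eps = K_\eps$, extract a positive infimum of $-\mc{C}_v(x,y,\tfrac{1}{2})$ there for part (i), and for part (ii) absorb the uniform error $\|v_n-v\|_\infty$ into half the modulus. The only cosmetic difference is that the paper retains the variable modulus $\tfrac{1}{2}\rho(|x-y|)$ for $v_n$ while you pass to the constant $c_\eps/2$; both are valid since the definition allows a constant modulus.
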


\begin{proof}
Set $\Lambda_{\eps}:= \left\{(x,y) \in \overline{\Omega}\times\overline{\Omega} \mid |x-y| \geq \eps \right\}$, the first claim comes straightforwardly by considering $\rho_{\eps}:=- \sup_{\Lambda_{\eps}} \mc{C}_v(x,y,\frac{1}{2})>0$.
Consider the second point and fix $\eps>0$. Then for $(x,y) \in \Lambda_{\eps}$ we have $\mc{C}_v(x,y,\frac{1}{2}) \leq -\rho_{\eps}(|x-y|)$.
Since $\mc{C}_{v_n}\left(x,y,\tfrac{1}{2}\right) \to \mc{C}_v\left(x,y,\tfrac{1}{2}\right) $ uniformly on $\Lambda_{\eps}$, we have, for a fixed $\delta>0$ and $n$ sufficiently large, $\mc{C}_{v_n}\left(x,y,\tfrac{1}{2}\right) \leq \mc{C}_v\left(x,y,\tfrac{1}{2}\right) + \delta \leq -\rho_{\eps}(|x-y|) + \delta$ on $\Lambda_{\eps}$. 
If $\rho_{\eps}$ is constant, it is sufficient to fix $\delta < \rho_{\eps}$. 
Otherwise, if $\rho_{\eps}(t)=\frac{1}{8}m_{\eps} t^2$, then $\mc{C}_{v_n}\left(x,y,\tfrac{1}{2}\right) \leq -\frac{1}{8}m_{\eps} |x-y|^2+\frac{\delta}{\eps^2} |x-y|^2$ and we choose $\delta < \frac{1}{8} \eps^2 m_{\eps}.$
\end{proof}

\section{Estimates on the difference of two solutions}
\label{sec_diff_solutions}

In this Section we want to compare the solutions of two problems, where the space component may act differently; this comparison will in particular lead to the proof of Theorem \ref{thm_approx_concav_compar} and its generalizations. 
Since the problem is quasilinear, we cannot directly work on a problem solved by the difference of the two solutions.
We start by some considerations on general functions. 
We recall the following result by \cite[Theorems 1.1 and 1.2]{Le007}.
\begin{lemma}[\cite{Le007}]
\label{lem_estim_mixed}
Let $\Omega \subset \R^N$, $N \geq 1$, be open and with the uniform interior cone condition. Consider a function $u \in C^{0,\beta}(\overline{\Omega}) \cap L^q(\Omega)$, $\beta \in (0,1]$ and $q \in [1,+\infty)$. 
Then 
\begin{equation}\label{eq_diff_soluz_origin}
\norm{u}_{\infty} \leq C \norm{u}_{C^{0,\beta}}^{1-\kappa_q} \norm{u}_q^{\kappa_q}
\end{equation}
where $$\kappa_q:=\kappa_{N,\beta,q}:=\frac{\beta}{\beta + \frac{N}{q}} \in (0,1)$$
and $C=C_{N,\beta,q}>0$ is given as follows:
\begin{itemize}
\item if $\Omega = \R^N$, then $C_{N,\beta,q}=\max\{\frac{N}{N+\beta}, \omega_N^{-\frac{1}{q}}\}>0$ (here $\omega_N$ is the volume of the unit ball);
\item if $\Omega \neq \R^N$, let $r_0$ be the radius of the (uniform) interior cone; then $C_{N,\beta,q} = \max\big\{\frac{N}{N+\beta}, \omega_K^{-\frac{1}{q}}, r_0^{\beta}\big\}$ (here $\omega_K$ is the volume of the cone scaled to radius 1).
\end{itemize}
As a consequence of Lemma \ref{lem_estim_mixed}, if $u \in C^{k,\beta}(\overline{\Omega}) \cap W^{k,q}(\Omega)$, $k \in \N$, $\beta \in (0,1]$ and $q \in [1,+\infty)$, then
$$\norm{u}_{C^{k}} \leq C \norm{u}_{C^{k,\beta}}^{1-\kappa_q} \norm{u}_{W^{k,q}}^{\kappa_q}.$$
\end{lemma}

As a corollary of Lemma \ref{lem_estim_mixed} we gain a preliminary estimate on the difference of two general functions. 
We notice that, if $\beta \in (0,1]$, then $\kappa_{p^*} = \tfrac{p\beta }{p\beta + N-p} \in (0,1)$ (for any $p \in (1,N)$) and $\kappa_{2^*} = \tfrac{2\beta }{2\beta + N-2} \in (0,1)$ (for any $N \geq 3$).
\begin{corollary}
Let $\Omega \subset \R^N$, $N \geq 1$ be open, with the uniform interior cone condition, and let $p \in (1,+\infty)$. 
Let $u,v \in C^{0,\beta}(\overline{\Omega}) \cap W^{1,p}(\Omega)$, $\beta \in (0,1]$, then 
\begin{equation}\label{eq_stima_unif_diff_p}
\norm{u-v}_{\infty}\leq C \left(\norm{u}_{C^{0,\beta}}+\norm{v}_{C^{0,\beta}}\right)^{1-\kappa_{q}} \norm{\nabla u-\nabla v}_{p}^{\kappa_{q}}
\end{equation}
with $q=p^*$ and $C=C(N,\beta,p, \Omega)>0$ if $p<N$, while $q<\infty$ and $C=C(N,\beta, p, \Omega, q)>0$ if $p\geq N$.%
\footnote{Noticed that $\kappa_{N,\beta,q} \to 1$ and $C_{N,\beta,q} \to \max\big\{\frac{N}{N+\beta}, r_0^{\beta}\big\} >0$ as $q \to +\infty$, we see that the dependence of the constant on $q$ is essentially given by the Sobolev embedding constant of $W^{1,p}(\Omega) \hookrightarrow L^q(\Omega)$.}

Similarly, if $u,v \in C^{0,\beta}(\overline{\Omega})$ with $u-v \in D^{1,2}(\Omega)$, then 
\begin{equation}\label{eq_stima_unif_diff_2}
\norm{u-v}_{\infty}\leq C \left(\norm{u}_{C^{0,\beta}}+\norm{v}_{C^{0,\beta}}\right)^{1-\kappa_{q}} \norm{\nabla(u-v)}_{2}^{\kappa_{q}}
\end{equation}
with $q=2^*$ and $C=C(N,\beta, \Omega)>0$ if $N\geq 3$, while $q <\infty$ and $C=C(N,\beta, \Omega, q)>0$ if $N\leq 2$.
\end{corollary}

We want to prove Theorem \ref{thm_approx_concav_compar}. We discuss here a more general case.

\begin{theorem}
\label{thm_compar_1}
Let $\Omega \subset \R^N$, $N \geq 1$, be open, with the uniform interior cone condition, and let $p \in (1,+\infty)$. 
Consider $a_1, a_2 \in L^{m}(\Omega)$, $m \in (1,+\infty]$, and the problems
\begin{equation}\label{eq_confron_a1_a2}
\begin{cases}
-\Delta_p u_1 = a_1(x) & \quad \hbox{ in $\Omega$},\\ 
u_1 =0 & \quad \hbox{ on $\partial \Omega$},
\end{cases}
\qquad
\begin{cases}
-\Delta_p u_2 = a_2(x) & \quad \hbox{ in $\Omega$},\\ 
u_2 =0 & \quad \hbox{ on $\partial \Omega$},
\end{cases}
\end{equation}
with positive solutions $u_1, u_2 \in C^{0,\beta}(\overline{\Omega})$ for some $\beta \in (0,1]$. 
\begin{itemize}
\item If $p \geq 2$ assume $m \geq \frac{p}{p-2}$. 
Then 
$$\norm{u_1-u_2}_{\infty} \leq C \norm{a_1 -a_2}_{m}^{\frac{\kappa_{q}}{p-1}}$$
where $C=C(p, m, \Omega, \norm{u_1}_{C^{0,\beta}(\Omega)}, \norm{u_2}_{C^{0,\beta}(\Omega)}, q)>0$, and $q=p^*$ if $2 \leq p<N$, while $q<\infty$ if $p\geq \max\{2,N\}$.
\item If $p \leq 2$ assume $m \geq 2$ and $u_1, u_2 \in W^{1,\infty}(\Omega)$. 
Then
$$\norm{u_1 - u_2}_{\infty} \leq C \norm{a_1 -a_2}_{m}^{\kappa_{q}} $$
where $C=C(p, m, \Omega, \norm{u_1}_{C^{0,\beta}(\Omega)}, \norm{u_2}_{C^{0,\beta}(\Omega)}, \norm{\nabla u_1}_{\infty}, \norm{\nabla u_2}_{\infty}, q)>0$ and $q=2^*$ if $N\geq 3$, while $q <\infty$ if $N\leq 2$.
\end{itemize}
\end{theorem}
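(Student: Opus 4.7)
The plan is to test the difference of the two equations against $u_1-u_2\in W_0^{1,p}(\Omega)$, use the classical monotonicity inequalities for the $p$-Laplacian to convert this into a norm bound on $\nabla(u_1-u_2)$, and finally invoke the interpolation estimates \eqref{eq_stima_unif_diff_p} and \eqref{eq_stima_unif_diff_2} to upgrade the gradient bound into the claimed $L^\infty$ bound on $u_1-u_2$. Subtracting the two weak formulations of \eqref{eq_confron_a1_a2} and choosing $u_1-u_2$ as a test function gives
\[
\int_\Omega\bigl(|\nabla u_1|^{p-2}\nabla u_1-|\nabla u_2|^{p-2}\nabla u_2\bigr)\cdot\nabla(u_1-u_2)\,dx=\int_\Omega(a_1-a_2)(u_1-u_2)\,dx,
\]
and H\"older controls the right-hand side by $\norm{a_1-a_2}_q\,\norm{u_1-u_2}_{q'}$ with $q'=q/(q-1)$.

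For $p\geq 2$ I would use the super-quadratic monotonicity inequality $\bigl(|\xi|^{p-2}\xi-|\eta|^{p-2}\eta\bigr)\cdot(\xi-\eta)\geq c_p|\xi-\eta|^p$; this yields $\norm{\nabla(u_1-u_2)}_p^p\leq C\,\norm{a_1-a_2}_q\,\norm{u_1-u_2}_{q'}$. Under the stated hypothesis on $q$, the exponent $q'$ lies in the range covered by the Sobolev--Poincar\'e embedding $W_0^{1,p}(\Omega)\hookrightarrow L^{q'}(\Omega)$, hence $\norm{u_1-u_2}_{q'}\leq C\,\norm{\nabla(u_1-u_2)}_p$, so that $\norm{\nabla(u_1-u_2)}_p\leq C\,\norm{a_1-a_2}_q^{1/(p-1)}$. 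Inserting this into \eqref{eq_stima_unif_diff_p} produces the advertised exponent $\kappa_{p^*}/(p-1)$, with constants depending on $\norm{u_1}_{C^{0,\beta}}+\norm{u_2}_{C^{0,\beta}}$.

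For $1<p\leq 2$ I would use instead the sub-quadratic inequality $\bigl(|\xi|^{p-2}\xi-|\eta|^{p-2}\eta\bigr)\cdot(\xi-\eta)\geq c_p\bigl(|\xi|+|\eta|\bigr)^{p-2}|\xi-\eta|^2$: since $p-2\leq 0$, the $W^{1,\infty}$-control on both $u_i$ (which is the reason such a hypothesis appears in the statement) lets me bound the weight $(|\nabla u_1|+|\nabla u_2|)^{p-2}$ from below by a positive constant depending on $\norm{\nabla u_1}_\infty+\norm{\nabla u_2}_\infty$. I obtain $\norm{\nabla(u_1-u_2)}_2^2\leq C\,\norm{a_1-a_2}_q\,\norm{u_1-u_2}_{q'}$; the assumption $q\geq 2$ ensures $q'\leq 2$, so Poincar\'e bounds $\norm{u_1-u_2}_{q'}$ by $\norm{\nabla(u_1-u_2)}_2$, whence $\norm{\nabla(u_1-u_2)}_2\leq C\,\norm{a_1-a_2}_q$, and a final application of \eqref{eq_stima_unif_diff_2} gives the exponent $\kappa_{2^*}$.

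The routine but delicate part of the argument is the exponent bookkeeping and the check that the condition imposed on $q$ is exactly the threshold under which Sobolev (or Poincar\'e) kicks in; the genuine obstacle is the sub-quadratic case, where the degeneracy of the monotonicity inequality when the gradients are large is precisely what forces the $W^{1,\infty}$-assumption on the two solutions, and prevents one from repeating the $L^p$-based argument of the super-quadratic regime.
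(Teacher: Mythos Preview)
Your proposal is correct and matches the paper's proof essentially line by line: test the difference of the two equations against $u_1-u_2$, apply the standard $p$-Laplacian monotonicity inequality (super-quadratic for $p\ge 2$, sub-quadratic with the $W^{1,\infty}$ bound for $p\le 2$), close the gradient estimate via H\"older and Poincar\'e, and feed the result into \eqref{eq_stima_unif_diff_p} or \eqref{eq_stima_unif_diff_2}. The only cosmetic differences are that the paper writes the sub-quadratic inequality with the weight $(1+|\nabla u_1|^2+|\nabla u_2|^2)^{(2-p)/2}$ rather than $(|\nabla u_1|+|\nabla u_2|)^{p-2}$, and that the paper uses H\"older down to $L^p$ (resp.\ $L^2$) before Poincar\'e, whereas you invoke a Sobolev--Poincar\'e embedding directly; both routes yield the same bound. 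One small caveat: the hypothesis $q\ge \tfrac{p}{p-2}$ gives $q'\le \tfrac{p}{2}$, which is comfortably inside the range where either argument applies, so ``exactly the threshold'' is a slight overstatement---the stated condition is sufficient but not sharp.
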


\begin{proof}
By definition of weak solution we have, for any $\varphi \in W^{1,p}_0(\Omega)$,
$$\int_{\Omega} \left( |\nabla u_1|^{p-2} \nabla u_1 - |\nabla u_2|^{p-2} \nabla u_2\right) \cdot \nabla \varphi = \int_{\Omega} (a_1(x) -a_2(x)) \varphi;$$
we choose $\varphi = u_1-u_2 \in W^{1,p}_0(\Omega)$, so that
$$\int_{\Omega} \left( |\nabla u_1|^{p-2} \nabla u_1 - |\nabla u_2|^{p-2} \nabla u_2\right) \cdot (\nabla u_1- \nabla u_2) 
\leq 
 \norm{a_1 -a_2}_{m} \norm{u_1-u_2}_{r} $$
for $\frac{1}{m}+\frac{1}{r}=1$, $r \leq \max\{p,2\}$. 
Let us now distinguish two cases.

\smallskip

If $p \geq 2$, 
by \cite[Section 12(I)]{Lin17}, Hölder and Poincaré inequalities we have
$$ 2^{2-p} \norm{\nabla u_1 - \nabla u_2}_p^p \leq \norm{a_1 -a_2}_{m} \norm{u_1-u_2}_{r} \leq |\Omega|^{\frac{p-r}{p r}} C_{\Omega,p} \norm{a_1 -a_2}_{m} \norm{\nabla u_1- \nabla u_2}_p ;$$
here $C_{\Omega,p}$ is the best Poincaré constant on $W^{1,p}_0(\Omega)$.
Thus
$$ \norm{\nabla u_1 - \nabla u_2}_p \leq \left( 2^{2-p} |\Omega|^{\frac{p-r}{pr}} C_{\Omega,p}\right)^{\frac{1}{p-1}} \norm{a_1 -a_2}_{m}^{\frac{1}{p-1}}.$$
Set
$C_1:=\norm{u_1}_{C^{0,\beta}(\Omega)}+\norm{u_2}_{C^{0,\beta}(\Omega)}$ by \eqref{eq_stima_unif_diff_p} we obtain
$$\norm{u_1-u_2}_{\infty} \lesssim C_1^{1-\kappa_{q}} \norm{a_1 -a_2}_{m}^{\frac{\kappa_{q}}{p-1}}$$
with $q=p^*$ if $2 \leq p<N$, while $q<\infty$ if $p\geq \max\{2,N\}$.

\medskip

If $1 < p \leq 2$ then, by \cite[Section 12(VII)]{Lin17} we have
$$ (p-1)\int_{\Omega} \frac{|\nabla u_1 - \nabla u_2|^2}{(1+|\nabla u_1|^2 + |\nabla u_2|^2)^{\frac{2-p}{2}}} \leq \norm{a_1 -a_2}_{m} \norm{u_1-u_2}_{r} \leq |\Omega|^{\frac{2-r}{2 r}} C_{\Omega,2} \norm{a_1 -a_2}_{m} \norm{\nabla u_1- \nabla u_2}_2.$$
Set
$C_2:=1+\norm{\nabla u_1}_{\infty}+ \norm{\nabla u_2}_{\infty}$ 
we gain
$$C_2^{-\frac{2-p}{2}}(p-1) \norm{\nabla u_1 - \nabla u_2}_2^2 \leq |\Omega|^{\frac{2-r}{2 r}} C_{\Omega,2} \norm{a_1 -a_2}_{m} \norm{\nabla u_1- \nabla u_2}_2$$
thus
$$\norm{\nabla u_1 - \nabla u_2}_2 \leq \left( \tfrac{1}{p-1}C_2^{\frac{2-p}{2}} |\Omega|^{\frac{2-r}{2 r}} C_{\Omega,2}\right) \norm{a_1 -a_2}_{m} .$$
Therefore, exploiting \eqref{eq_stima_unif_diff_2},
$$\norm{u_1 - u_2}_{\infty} \lesssim C_1^{1-\kappa_{q}} C_2^{\frac{2-p}{2}} \norm{a_1 -a_2}_{m}^{\kappa_{q}},$$
with $q=2^*$ if $N\geq 3$, while $q <\infty$ if $N\leq 2$.
\end{proof}

\begin{corollary}
\label{corol_confron_2}
Let $\Omega \subset \R^N$, $N \geq 2$, be open, with the uniform interior cone condition, and let $p \in (1,+\infty)$. 
Let $a_1, a_2 \in L^{\infty}(\Omega)$ and $u_1, u_2$ be positive solutions of \eqref{eq_confron_a1_a2}. 
Then 
\begin{itemize}
\item Let $p \geq 2$, and assume $\Omega$ satisfies the uniform (interior and exterior) cone condition. Then 
$$\norm{u_1-u_2}_{\infty} \leq C \norm{a_1 -a_2}_{\infty}^{\frac{\kappa_{q}}{p-1}}$$
where $C=C(p, \Omega, \norm{a_1}_{\infty}, \norm{a_2}_{\infty}, q)>0$, and $q=p^*$ if $2 \leq p<N$, while $q<\infty$ if $p\geq N \geq 2$. 
\item Let $p \leq 2$ and assume $\partial \Omega \in C^{1,\alpha}$. 
Then
$$\norm{u_1 - u_2}_{\infty} \leq C \norm{a_1 -a_2}_{\infty}^{\kappa_{q}} $$
where $C=C(p, \Omega, \alpha, \norm{a_1}_{\infty}, \norm{a_2}_{\infty}, q)>0$, and $q=2^*$ if $N\geq 3$, while $q <\infty$ if $N= 2$. 
\end{itemize}
\end{corollary}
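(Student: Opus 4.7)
The plan is to invoke Theorem \ref{thm_compar_1} with the particular choice $q = +\infty$, which is admissible in both branches (since $\infty \geq \tfrac{p}{p-2}$ for $p \geq 2$ and $\infty \geq 2$ for $p \leq 2$) and for which $\norm{a_1-a_2}_q = \norm{a_1-a_2}_\infty$. The only remaining task is to check that the regularity prerequisites of Theorem \ref{thm_compar_1} (namely, $C^{0,\beta}(\overline{\Omega})$ control in the first branch, plus $W^{1,\infty}$ control in the second) are implied by the hypotheses present here, with norms of $u_1, u_2$ bounded in terms of the data $a_1, a_2$.

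For the case $p \geq 2$, the input $a_i \in L^\infty(\Omega)$ together with the uniform interior and exterior cone condition on $\Omega$ delivers, via standard global Hölder regularity for $-\Delta_p u = a(x)$ under zero Dirichlet datum (as collected in Appendix \ref{sec_append_p_lapl}), the estimate $u_i \in C^{0,\beta}(\overline{\Omega})$ for some $\beta = \beta(N, p, \Omega) \in (0, 1]$, with $\norm{u_i}_{C^{0,\beta}(\overline{\Omega})}$ controlled by $p$, $\Omega$ and $\norm{a_i}_\infty$. Plugging this bound into the first branch of Theorem \ref{thm_compar_1} (with $q = +\infty$) yields the announced inequality, with exponent $\kappa_{p^*}/(p-1)$ and a constant of the claimed form.

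For the case $1 < p \leq 2$, the stronger hypotheses $a_i \in C^{0,\alpha}(\Omega)$ and $\partial \Omega \in C^{1,\alpha}$ are precisely the input of Lieberman's global $C^{1,\alpha}$ estimate for the $p$-Laplacian, so that $u_i \in C^{1,\alpha}(\overline{\Omega}) \hookrightarrow W^{1,\infty}(\Omega)$, with $\norm{u_i}_{C^{1,\alpha}(\overline{\Omega})}$ (and in particular both $\norm{\nabla u_i}_\infty$ and $\norm{u_i}_{C^{0,\beta}(\overline{\Omega})}$, for any admissible $\beta \leq \alpha$) estimated in terms of $\norm{a_i}_{C^{0,\alpha}(\Omega)}$. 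Applying the second branch of Theorem \ref{thm_compar_1} with $q = +\infty$ then delivers the estimate with exponent $\kappa_{2^*}$.

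The main (and essentially only) obstacle is to cite the correct \emph{global} regularity result in each regime: purely interior estimates would not suffice, one needs control up to $\partial \Omega$ with constants depending only on the $L^\infty$ (resp. $C^{0,\alpha}$) size of $a_i$. These facts are by now classical and are summarized in the appendix, so the corollary reduces to a direct reading of Theorem \ref{thm_compar_1} at $q = +\infty$; no approximation or limit procedure is required.
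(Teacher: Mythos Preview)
Your proposal is correct and follows essentially the same approach as the paper: verify the regularity hypotheses of Theorem~\ref{thm_compar_1} (global $C^{0,\beta}(\overline{\Omega})$ for $p\geq 2$ via Trudinger/Byun--Palagachev--Shin type estimates, and global $C^{1,\beta}(\overline{\Omega})\hookrightarrow W^{1,\infty}$ for $p\leq 2$ via Lieberman), then apply that theorem with $q=\infty$. One minor caveat: the global H\"older/$C^{1,\beta}$ regularity results you need are not in Appendix~\ref{sec_append_p_lapl} (which collects uniqueness and comparison principles) but are the external references \cite{BPS22,Tru67,Lie88} cited in the paper; the mathematical content of your argument is unaffected.
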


\begin{proof}
By \cite[Corollary 4.2]{Tru67} (see also \cite[Theorem 2.3]{BPS22}) we have the $C^{0,\beta}(\overline{\Omega})$ regularity, while by \cite[Theorem 1]{Lie88} (see also \cite[Corollary 1.1]{GuVe89} and \cite[Section 4.4]{LaUr66}) we have the $C^{1,\beta}(\overline{\Omega})$ (and thus $W^{1,\infty})$ regularity. 
Hence $u$ satisfies the assumptions of Theorem \ref{thm_compar_1} and we achieve the claim, noticing that the constants $C$ therein depend in a positive power way from the norms of $u_1,u_2$, and these last two, thanks to the abovementioned regularity results (exploiting that the right hand side does not depend on $u$), can be estimated from above by the norms of $a_1,a_2$.
\end{proof}

We deal now with perturbed concavity properties that can be deduced by this perturbation result; we recall by Remark \ref{rem_ulam_thm} that such result can be express in terms of the concavity function.

\begin{corollary}
\label{corol_confron_3}
Let $\Omega \subset \R^N$, $N \geq 2$, be open, bounded, convex, and let $p \in (1,+\infty)$. 
Let $a_1, a_2 \in L^{\infty}(\Omega)$ and $u_1, u_2$ as in Corollary \ref{corol_confron_2}. 
Assume $a_2 >0$ to be $\theta$-concave, $\theta \geq 1$. 
Then $u_2$ is $\frac{\theta(p-1)}{1+\theta p}$-concave and the following facts hold. 
\begin{itemize}
\item Let $p \geq 2$. 
 Then 
 $$\norm{u_1^{\frac{\theta(p-1)}{1+\theta p}}-u_2^{\frac{\theta(p-1)}{1+\theta p}}}_{\infty} \leq C\norm{a_1 -a_2}_{\infty}^{\frac{\kappa_{q}}{p-1}\frac{\theta(p-1)}{1+\theta p}}$$
where $C=C(p, \Omega, \norm{a_1}_{\infty}, \norm{a_2}_{\infty}, q)>0$, and $q=p^*$ if $2 \leq p<N$, while $q<\infty$ if $p\geq N \geq 2$. 
\item Let $p \leq 2$ and assume $\partial \Omega \in C^{1,\alpha}$. 
Then
$$\norm{u_1^{\frac{\theta(p-1)}{1+\theta p}}-u_2^{\frac{\theta(p-1)}{1+\theta p}}}_{\infty} \leq C\norm{a_1 -a_2}_{\infty}^{\kappa_{q}\frac{\theta(p-1)}{1+\theta p}}$$
where $C=C(p, \Omega, \alpha, \norm{a_1}_{\infty}, \norm{a_2}_{\infty}, q)>0$, and $q=2^*$ if $N\geq 3$, while $q <\infty$ if $N= 2$. 
\end{itemize}
The same results apply for $\theta=\infty$ by substituting $\frac{\theta(p-1)}{1+\theta p}$ with $\frac{p-1}{p}$.
\end{corollary}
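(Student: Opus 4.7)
The plan is to combine the exact concavity statement of Theorem \ref{thm_main_conc_exact} applied to $u_2$ with the quantitative stability estimate of Corollary \ref{corol_confron_2}, glued together by an elementary power inequality. Set $\alpha := \frac{\theta(p-1)}{1+\theta p}$ (with the natural convention $\alpha := \frac{p-1}{p}$ when $\theta = \infty$); since $1 + \theta p - \theta(p-1) = 1 + \theta > 0$, we have $\alpha \in (0, 1)$ for all admissible $\theta, p$, a fact that will be used in a crucial way.

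First, I would apply Theorem \ref{thm_main_conc_exact} to the equation $-\Delta_p u_2 = a_2(x)$, i.e.\ with $q = 0$: the hypothesis that $a_2 > 0$ is $\theta$-concave with $\theta \geq 1$ yields directly that $u_2$ is $\alpha$-concave. The limiting case $\theta = \infty$, where $a_2$ reduces to a positive constant, is handled by the second sentence of Theorem \ref{thm_main_conc_exact} and gives the claimed $\frac{p-1}{p}$-concavity.

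Second, for the norm comparison I would invoke the subadditivity inequality $|s^{\alpha} - t^{\alpha}| \leq |s - t|^{\alpha}$ valid for all $s, t \geq 0$ whenever $\alpha \in (0, 1]$; applying it pointwise with $s = u_1(x)$ and $t = u_2(x)$ (both nonnegative) and taking the supremum gives
\begin{equation*}
\norm{u_1^{\alpha} - u_2^{\alpha}}_{\infty} \leq \norm{u_1 - u_2}_{\infty}^{\alpha}.
\end{equation*}
It then remains to bound $\norm{u_1 - u_2}_{\infty}$ by means of Corollary \ref{corol_confron_2}: for $p \geq 2$ the uniform (interior and exterior) cone condition follows automatically from the bounded convexity of $\Omega$, giving a bound by a suitable positive power of $\norm{a_1 - a_2}_\infty$; for $p \leq 2$ the Hölder regularity of the $a_i$ and $\partial\Omega \in C^{1,\alpha}$ allow us to invoke the second half of that corollary. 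Raising to the $\alpha$-th power yields the stated estimate (with exponents matching in the $p = 2$ case, where the two regimes overlap).

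The principal obstacle, and the only non-mechanical check, is ensuring that the regularity hypotheses required by Theorem \ref{thm_main_conc_exact} (namely $a_2 \in C^{1,\alpha}_{loc}(\Omega)$ and $\partial\Omega \in C^{1,\alpha}$) are compatible with the weaker assumptions of the present corollary. When they are not given \emph{a priori}, I expect the conclusion to be recovered by an approximation procedure analogous to the one developed in Section \ref{sec_approx_argu}: replace $a_2$ by a sequence of smooth $\theta$-concave mollifications $a_2^{(n)} \to a_2$ uniformly, obtain $\alpha$-concavity for the associated $u_2^{(n)}$, and pass to the limit using that $\alpha$-concavity is closed under locally uniform convergence of positive functions, together with the uniqueness and stability of solutions guaranteed by Corollary \ref{corol_confron_2} itself.
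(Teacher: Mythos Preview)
Your approach is essentially identical to the paper's: combine Theorem \ref{thm_main_conc_exact} (with $q=0$) for the $\alpha$-concavity of $u_2$, Corollary \ref{corol_confron_2} for the $L^\infty$-bound on $u_1-u_2$, and the elementary inequality $\norm{u_1^{\gamma}-u_2^{\gamma}}_{\infty}\leq \norm{u_1-u_2}_{\infty}^{\gamma}$ for $\gamma\in(0,1)$. Your additional paragraph on the regularity mismatch (Theorem \ref{thm_main_conc_exact} asks for $a_2\in C^{1,\alpha}_{loc}$ and $\partial\Omega\in C^{1,\alpha}$, which the corollary does not assume) is a point the paper's two-line proof simply passes over; the mollification-plus-stability argument you sketch is the natural way to close that gap.
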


\begin{proof}
The claim follows from Corollary \ref{corol_confron_2}, Theorem \ref{thm_main_conc_exact} and the fact that, for $\gamma \in (0,1)$, 
\begin{equation*}
\norm{u_1^{\gamma}-u_2^{\gamma}}_{\infty} \leq \norm{u_1-u_2}_{\infty}^{\gamma}.
\vspace{-2em}
\tag*{\qed}
\end{equation*}
\renewcommand{\qedsymbol}{}
\end{proof}

\subsubsection{Applications}

\begin{proof}[Proof of Theorem \ref{thm_approx_concav_compar}]
We see that the claim follows by Corollary \ref{corol_confron_3}, by considering $a_2$ constant.
\end{proof}

\begin{proof}[Proof of Corollary \ref{cor_strict_conc_const}]
Since the problem is linear, we have
$$-\Delta(u_n - u) = a_n(x) - a_{\infty} \quad \hbox{in $\Omega$}$$
with $u_n-u=0$ on $\partial \Omega$. It is well known that \cite[Theorem 2.30]{FrRo22}
$$\norm{u_n-u}_{C^{2,\alpha}} \leq C \norm{a_n - a}_{C^{0,\alpha}};$$
thus, in particular, $u_n \to u$ in $C^2(\Omega)$. Hence by Theorem \ref{thm_intr_recap}, Corollary \ref{corol_confron_3} and Proposition \ref{prop_strong_conc}, we have the claim.
\end{proof}

Arguing as in Corollary \ref{cor_strict_conc_const}, a weaker version can be stated also in general convex domains.

\begin{corollary}
Let $\Omega \subset \R^N$, $N\geq 2$, be open, bounded, convex, and let $p=2$. 
Consider $(a_n)_n : \Omega \to \R$, and assume that, for some $a_{\infty}>0$ constant
$$a_n\to a_{\infty} \quad \hbox{in $C^{0,\alpha}(\Omega)$ as $n \to +\infty$}.$$
Then, for each $\delta>0$, the positive solution $u_n$ of \eqref{eq_intr_a_n_conv} is such that $u_n$ is strongly $\frac{1}{2}$-concave in $ \Omega_{\delta}$, for $n \geq n_0(\delta) \gg 0$. 
In particular, for these values of $n$, the level sets of $u_n$ are strictly convex in $\Omega_{\delta}$ and $u_n$ has a single (and nondegenerate) critical point in $\Omega_{\delta}$. 
\end{corollary}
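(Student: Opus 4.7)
The strategy mirrors the proof of Corollary \ref{cor_strict_conc_const}, substituting interior Schauder estimates for the global ones, and paying the price by restricting the concavity conclusion to the interior parallel bodies $\Omega_{\delta}$. Let $u_{\infty}$ denote the unique positive solution of the torsion problem $-\Delta u_{\infty}=a_{\infty}$ in $\Omega$ with $u_{\infty}=0$ on $\partial\Omega$. By linearity, $w_n := u_n-u_{\infty}$ solves $-\Delta w_n = a_n-a_{\infty}$ with zero boundary data, and Corollary \ref{corol_confron_2} (or a direct application of the maximum principle) gives $\|w_n\|_{L^{\infty}(\Omega)}\to 0$ as $n\to\infty$.

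Next, for any fixed $\delta>0$, I would pick an auxiliary inner parallel set $\Omega_{\delta/2}\Supset \overline{\Omega_{\delta}}$ (itself convex by Proposition \ref{prop_boundar_approximat}) and invoke interior Schauder estimates for the Laplacian on compactly contained subdomains:
\[
\|w_n\|_{C^{2,\alpha}(\overline{\Omega_{\delta}})} \leq C(\delta)\Big(\|a_n-a_{\infty}\|_{C^{0,\alpha}(\Omega_{\delta/2})} + \|w_n\|_{L^{\infty}(\Omega_{\delta/2})}\Big) \longrightarrow 0.
\]
This yields $u_n\to u_{\infty}$ in $C^2(\overline{\Omega_{\delta}})$ for every $\delta>0$.

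I would then apply Theorem \ref{thm_intr_recap} with $q=0$ and $a_{\infty}$ constant (hence $\infty$-concave), obtaining that $u_{\infty}$ is strongly $\tfrac{1}{2}$-concave on every $\Omega'\Subset\Omega$, in particular on an open neighbourhood of $\overline{\Omega_{\delta}}$. Since the strong maximum principle forces $\inf_{\overline{\Omega_{\delta}}} u_{\infty}>0$, the square root $u_{\infty}^{1/2}$ is $C^2$ and strongly concave on $\overline{\Omega_{\delta}}$. The $C^2(\overline{\Omega_{\delta}})$ convergence above, combined with the lower bound $\inf_{\overline{\Omega_{\delta}}} u_n \geq \tfrac{1}{2}\inf_{\overline{\Omega_{\delta}}} u_{\infty}>0$ valid for $n\gg 0$, lifts to $u_n^{1/2}\to u_{\infty}^{1/2}$ in $C^2(\overline{\Omega_{\delta}})$, and Proposition \ref{prop_strong_conc} yields strong concavity of $u_n^{1/2}$ on $\overline{\Omega_{\delta}}$ for $n\geq n_0(\delta)$.

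Strong concavity of $u_n^{1/2}$ on the convex open set $\Omega_{\delta}$ automatically implies strict convexity of its super-level sets (and hence of those of $u_n$, since $t\mapsto t^2$ is monotone on $(0,\infty)$) and the uniqueness and nondegeneracy of any critical point of $u_n$ lying in $\Omega_{\delta}$, as $\nabla u_n$ and $\nabla u_n^{1/2}$ share their zero set in $\{u_n>0\}$. The main conceptual obstacle is accepting that boundary regularity of $\Omega$ is genuinely missing, so that global $C^{2,\alpha}$ convergence as in Corollary \ref{cor_strict_conc_const} is unavailable; interior Schauder estimates rescue the argument at the cost of restricting the concavity statement to a fixed $\Omega_{\delta}$. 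No further step is delicate once this trade-off is made.
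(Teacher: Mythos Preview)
Your argument is correct and follows precisely the approach the paper indicates (``Arguing as in Corollary \ref{cor_strict_conc_const}''): linearity gives $-\Delta(u_n-u_\infty)=a_n-a_\infty$, interior Schauder estimates replace the global ones of Corollary \ref{cor_strict_conc_const} to yield $C^2$-convergence on each $\overline{\Omega_\delta}$, and then Theorem \ref{thm_intr_recap} and Proposition \ref{prop_strong_conc} conclude. Your added care in passing from $u_n\to u_\infty$ to $u_n^{1/2}\to u_\infty^{1/2}$ in $C^2(\overline{\Omega_\delta})$ via the uniform lower bound on $u_n$ is a detail the paper leaves implicit.
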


A weaker version of Corollary \ref{cor_strict_conc_const} can be stated also in the quasilinear setting. 
\begin{corollary}
\label{cor_uniform_eps_conc_const}
Let $\Omega \subset \R^2$, be open, bounded, strictly convex, with $\partial \Omega \in C^{1,\alpha}$,
and let $p\in (1,+\infty)$.
Consider $(a_n)_n : \Omega \to \R$, equibounded in $C^{0,\alpha}(\Omega)$, $\alpha \in (0,1)$, 
and assume that, for some $a_{\infty}>0$ constant
$$a_n\to a_{\infty} \quad \hbox{in $L^{\infty}(\Omega)$ as $n \to +\infty$}.$$
Then, for any $\eps>0$, the positive solution $u_n$ of 
$$\begin{cases}
-\Delta_p u_n = a_n(x) & \quad \hbox{ in $\Omega$},\\ 
u_n =0 & \quad \hbox{ on $\partial \Omega$},
\end{cases}$$
is such that $u_n$ is $\eps$-uniformly $\frac{1}{2}$-concave for $n \geq n_0(\eps) \gg 0$.
\end{corollary}

\begin{proof}
By Theorem \ref{thm_intr_recap_plaplac} we have that $v_{\infty}=\sqrt{u_{\infty}}$ is strictly concave in $\Omega$. 
By Hopf boundary lemma \cite[Theorem 5]{Vaz84}, and Corollary \ref{corol_concav_bounda} we observe that, being $\Omega$ strictly convex, $\mc{C}_{v_{\infty}}(x,y,\frac{1}{2})$ cannot be zero if $|x-y|\geq \eps$. 
Thus $v_{\infty}$ is strictly concave in $\overline{\Omega}$, and the claim follows by Corollary \ref{corol_confron_3} and Proposition \ref{prop_eps_strong_conc}.
\end{proof}

We refer also to Corollary \ref{corol_hard_hen_perturb} for another application of Corollary \ref{corol_confron_3}, where $a_2$ is nonconstant, based on Theorem \ref{thm_main_conc_exact} and Example \ref{exam_hardy_henon}. 

\begin{remark}
\label{rem_diff_log}
We see that Corollary \ref{corol_confron_2} can be applied to deduce some information on $\log$-concavity. 
Indeed, as in the proof of Lemma \ref{lem_uniquen}, we first have $\frac{u_1}{u_2}, \frac{u_2}{u_1} \in L^{\infty}(\Omega)$.
Thus, by the mean value theorem, there exists $\lambda^* \in (0,1)$ such that
\begin{align*}
\pabs{\log(u_1)-\log(u_2)} &= \tfrac{1}{2}\pabs{\log\big(\tfrac{u_1}{u_2}\big) - \log\big(\tfrac{u_2}{u_1}\big)} 
 = \frac{1}{2(\lambda^*\frac{u_1}{u_2} + (1-\lambda^*) \frac{u_2}{u_1})} \pabs{\tfrac{u_1}{u_2} - \tfrac{u_2}{u_1}} \\
 & \leq C \pabs{\tfrac{u_1}{u_2} + \tfrac{u_2}{u_1}} |u_1-u_2|
 \leq C' |u_1-u_2|,
\end{align*}
with $C' = C'(\norm{\frac{u_1}{u_2}}_{\infty}, \norm{\frac{u_2}{u_1}}_{\infty})$. 
Hence by Corollary \ref{corol_confron_2}, we have
$$\norm{\log(u_1)-\log(u_2)}_{\infty} \leq C \norm{a_1 -a_2}_{\infty}^{\kappa}$$
for some $\kappa \in (0,1)$, and the constant $C$ depends also on $\norm{\frac{u_1}{u_2}}_{\infty}$ and $\norm{\frac{u_2}{u_1}}_{\infty}$.

We notice that, clearly, $\log(u_1), \log(u_2) \notin L^{\infty}(\Omega)$, but the difference does so. On the other hand we have, for each $\delta>0$, $\log(u_1), \log(u_2) \in L^{\infty}(\Omega_{\delta})$, thus, if $a_n \to a_0$ uniformly, then
$$\log(u_n) \to \log(u_0) \quad \hbox{ in $L^{\infty}(\Omega_{\delta})$};$$
if for example one is able to deduce that $\log(u_0)$ is strictly or strongly concave in $\Omega_{\delta}$, then some concavity information on $\mc{C}_{\log(u_n)}$ for $n$ large can be deduced as well, see Propositions \ref{prop_strong_conc} and \ref{prop_eps_strong_conc}. 
\end{remark}

\section{General lemmas about concavity} 
\label{sec_diff_bound}

To discuss the concavity of a function $v$, we move the attention to the negativity of its concavity function $\mc{C}_v$ and, in particular, to its supremum. The discussion will be splitted according to the location of the maximum, that is, on (or near) the ``boundary'' or the ``interior'' of the domain.
We highlight that the function $v$ will take the place of the transformation of a solution $u$, see Corollary \ref{corol_concav_bounda} below.

\subsection{Concavity on the boundary}

In this Section we discuss the possibility of the concavity function $\mc{C}_v$ to have a maximum on the boundary of its domain. 
Here we do not use the equation, but only the information on the boundary; see anyway \cite[Lemma 3.3]{BGP99} where also the equation is exploited (in a singular framework). 

\smallskip

We start by a result which deals \emph{precisely} with the boundary of (a general) $\Omega$, but gives no good quantitative information to be inherited by functions approximating $v$ (compare it with Proposition \ref{prop_bound_restricted_info} below and subsequent comments). 

\begin{lemma}
\label{lemma_concav_bordo}
Let $\Omega \subset\R^N$, $N\geq 1$, be open, bounded, convex. Let $v:\overline{\Omega} \to \R$ and 
 $(\bar{x},\bar{y}, \bar{\lambda}) \in \overline{\Omega} \times \partial \Omega \in [0,1]$.
Then the following facts hold. 
\begin{itemize}
\item[a)] if $[\bar{x},\bar{y}] \subset\partial \Omega$, assume 
\begin{equation}\label{eq_cond_bordo_extra}
v=const \; \hbox{ on $[\bar{x},\bar{y}]$}; 
\end{equation}
then $\mc{C}_v(\bar{x},\bar{y}, \bar{\lambda})=0$;
\item[b)] if $[\bar{x},\bar{y}] \not \subset \partial \Omega$ and $\bar{\lambda} \in (0,1)$, assume
\begin{equation}\label{eq_cond_per_max_cont}
\mc{C}_v(\bar{x},\bar{y},\lambda) <0 \quad \hbox{for $\lambda \approx 0^+$};
\end{equation}
then $(\bar{x},\bar{y}, \bar{\lambda})$ is not a global maximum of $\mc{C}_v$$_{|\overline{\Omega} \times \overline{\Omega} \times [0,1]}$. 
\end{itemize}
As a consequence, if conditions in a)-b) hold, then $(\bar{x},\bar{y}, \bar{\lambda})$ is not a positive global maximum of $\mc{C}_v$. 
As a further consequence, if $\Omega$ is strictly convex, $\bar{\lambda}\in (0,1)$, $\bar{x}\neq \bar{y}$ and condition in b) hold, then $(\bar{x},\bar{y}, \bar{\lambda})$ is not a global maximum of $\mc{C}_v$.
\end{lemma}

\begin{proof}
We can assume $\bar{\lambda}\in (0,1)$, otherwise $\mc{C}_v(\bar{x},\bar{y}, \bar{\lambda})=0$. Set $\bar{z}:= \bar{\lambda} \bar{x} + (1-\bar{\lambda})\bar{y}$. 
If $[\bar{x},\bar{y}]\subset \partial \Omega$, then $\bar{z} \in \partial \Omega$ and hence by \eqref{eq_cond_bordo_extra} we have $v(\bar{x})=v(\bar{y})=v(\bar{z})$, which implies $\mc{C}_v(\bar{x},\bar{y}, \bar{\lambda}) =0$. 
Thus we focus on $[\bar{x},\bar{y}] \not \subset \partial \Omega$.

Roughly speaking, by the assumptions $v$ is concave along the segment $[\bar{x},\bar{y}]$, near $\bar{y}$; by moving $\bar{y}$ a little closer to $\bar{x}$, but keeping their convex combination the same, we expect that this procedure increases the value of $\mc{C}_v$.
Let us see this in details. 
By \eqref{eq_cond_per_max_cont}, for some small $\lambda^* \in (0, \bar{\lambda})$, ($\lambda^* \approx 0$) we have
$$\mc{C}_v(\bar{x},\bar{y},\lambda^*)< 0.$$
Set $\bar{y}^*:=\lambda^* \bar{x} + (1-\lambda^*) \bar{y}$, ($\bar{y}^* \approx \bar{y}$) it means
\begin{equation}\label{eq_dim_z*}
v(\bar{y}^*) > \lambda^*v(\bar{x}) + (1-\lambda^*) v(\bar{y}).
\end{equation}
Choose $\mu \in (0,1)$ in such a way 
$\mu \bar{x} + (1-\mu) \bar{y}^* = \bar{\lambda} \bar{x} + (1-\bar{\lambda})\bar{y}$, 
that is
$$\mu:= \frac{\bar{\lambda}-\lambda^*}{1-\lambda^*} \in (0,1);$$
notice that $\bar{y}^*$, rather than $\bar{y}$, is closer to $\bar{x}$, while $\mu$ maintains the middle combination constant.
Moreover \eqref{eq_dim_z*} clearly implies
$$\mu v(\bar{x}) + (1-\mu) v(\bar{y}^*) > \bar{\lambda} v(\bar{x}) + (1-\bar{\lambda}) v(\bar{y})$$
from which $\mc{C}_v(\bar{x}, \bar{y}^*, \mu) > \mc{C}_v(\bar{x},\bar{y},\bar{\lambda})$, that is the claim. 
We conclude by observing that case a) cannot occur on a strictly convex domain, unless $\bar{x}=\bar{y}$.
\end{proof}

\begin{remark}
\label{rem_direction_derivat}
In order to apply Lemma \ref{lemma_concav_bordo}, we notice that \eqref{eq_cond_per_max_cont} is equivalent to check
$$ \frac{v\big(\bar{y} + \lambda(\bar{x}-\bar{y})\big)-v(\bar{y})}{\lambda} > v(\bar{x})-v(\bar{y}) \quad \hbox{for $\lambda \approx 0^+$}.$$
Thus it is sufficient to verify
\begin{equation}\label{eq_rem_implic_Cv}
\limsup_{\lambda \to 0^+} \frac{v\big(\bar{y} + \lambda(\bar{x}-\bar{y})\big)-v(\bar{y})}{\lambda} > v(\bar{x})- v(\bar{y}) 
\end{equation}
or equivalently, if $v \in C^1(\overline{\Omega})$ 
$$\partial_{\bar{x}-\bar{y}} v(\bar{y}) \equiv \nabla v(\bar{y}) \cdot (\bar{x}-\bar{y}) > v(\bar{x})-v(\bar{y})$$
or similarly
$$v(\bar{x}) < \nabla v(\bar{y}) \cdot (\bar{x}-\bar{y}) + v(\bar{y}).$$
Notice that, saying
$$v(x) < \nabla v(\bar{y}) \cdot (x-\bar{y}) + v(\bar{y}) \quad \hbox{for $x \in \overline{\Omega}\setminus\{\bar{y}\}$}$$
means that $v$ lies strictly beneath the plane tangent to $v$ in the boundary point $\bar{y} \in \partial \Omega$ (see also \eqref{eq_cond_bordo_piani}).
\end{remark}

\begin{remark}
\label{rem_strong_assump_nderiv}
By assuming $v\in C^1(\Omega)$ and the stronger assumption -- which clearly implies \eqref{eq_rem_implic_Cv} -- 
\begin{equation}\label{eq_rem_strong_cond_Cv}
\limsup_{\lambda \to 0^+} \frac{v\big(\bar{y} + \lambda(\bar{x}-\bar{y})\big)-v(\bar{y})}{\lambda} = + \infty 
\end{equation}
the proof of Lemma \ref{lemma_concav_bordo} can be simplified: indeed, in the case $[\bar{x},\bar{y}]\not \subset \partial \Omega$, being for $\eps$ small $\bar{y} + \eps(\bar{y}-\bar{x}) \in \Omega$ and exploiting $\mc{C}_v(\bar{x},\bar{y},\bar{\lambda}) \geq \mc{C}_v\big(\bar{x},\bar{y} + \eps(\bar{y}-\bar{x}), \bar{\lambda}\big)$, we get 
$$(1-\bar{\lambda}) \frac{v(\bar{y})-v\big(\bar{y}+\eps(\bar{y}-\bar{x})\big)}{\eps} \geq \frac{v\big(\bar{\lambda}\bar{x} + (1-\bar{\lambda})\bar{y}\big) - v\big(\bar{\lambda} \bar{x} + (1-\bar{\lambda})\bar{y} + \eps (1-\bar{\lambda})(\bar{y}-\bar{x})\big)}{\eps}.$$
Thus, sending $\eps \to 0^+$, we obtain $- \infty \geq (1-\bar{\lambda})\nabla v(\bar{\lambda}\bar{x} + (1-\bar{\lambda})\bar{y}) \cdot (\bar{y}-\bar{x});$
being $\bar{\lambda}\bar{x} + (1-\bar{\lambda})\bar{y} \in \Omega$, we have a contradiction.
\end{remark}

We can finally deal with transformations of a function $u$; notice that we are not requiring $\varphi$ to be monotone nor concave.

\begin{corollary}
\label{corol_concav_bounda}
Let $\Omega \subset\R^N$, $N\geq 1$, be open, bounded and convex. 
Let $k \in \R$ and $u$ be a function with 
$$u=k \; \hbox{ and } \; \partial_{\nu} u>0 \quad \hbox{ on $\partial \Omega$};$$
here $\nu$ is the interior normal vector. 
Let $\varphi: \R \to \R$ be such that $\varphi \in C^1(\R \setminus \{k\})$ and
$$\limsup_{t \to k} \varphi'(t)= + \infty.$$ 
Set $v:= \varphi(u)$. 
If $(\bar{x},\bar{y}, \bar{\lambda})$ is a global maximum of $\mc{C}_v$$_{|\overline{\Omega} \times \overline{\Omega} \times [0,1]}$, then $\mc{C}_v(\bar{x},\bar{y}, \bar{\lambda})=0$ or $(\bar{x},\bar{y}) \in \textup{int}\left(\overline{\Omega} \times \overline{\Omega}\right)$.
In particular, if $\Omega$ is strictly convex, then $\mc{C}_v(\bar{x},\bar{y}, \bar{\lambda})=0$ may happen only if $\bar{x}=\bar{y}$ or $\bar{\lambda} \in \{0,1\}$.
\end{corollary}

\begin{proof}
Assume by contradiction $\mc{C}_v(\bar{x},\bar{y}, \bar{\lambda})>0$ and $(\bar{x},\bar{y}) \notin \textup{int}\left(\overline{\Omega} \times \overline{\Omega}\right)$; with no loss of generality, we assume
$\bar{y} \in \partial \Omega$, $\bar{x}\in \overline{\Omega}$; we want to ensure condition \eqref{eq_rem_implic_Cv} (actually, \eqref{eq_rem_strong_cond_Cv}) in order to apply Lemma \ref{lemma_concav_bordo}. 
Fixed such points, by $\partial_{\nu(\bar{y})} u(\bar{y})>0$, we have that also $\partial_{\bar{x}-\bar{y}} u(\bar{y}) > 0$ on $\partial \Omega$ (here we use that $[\bar{x},\bar{y}] \not \subset \partial \Omega$, thus $\bar{x}-\bar{y} \not \perp \nu$ is pointing inward). 
Thus
$$\ell_{\bar{x},\bar{y}}:= \partial_{x-\bar{y}} u(\bar{y}) = \lim_{\lambda \to 0^+} \frac{u(\bar{y}+\lambda (\bar{x}-\bar{y})) - u(\bar{y})}{\lambda} >0;$$
in particular, $u(\bar{y}+\lambda (\bar{x}-\bar{y})) >u(\bar{y})$ for $\lambda$ small.
As a consequence, by the mean value theorem, for each $\lambda$ small there exists $t(\lambda) \in \big(u(\bar{y}+\lambda(\bar{x}-\bar{y})), u(\bar{y})\big)$ (and thus $t(\lambda) \to u(\bar{y})=k$ as $\lambda \to 0$) such that
\begin{align*}
\MoveEqLeft \limsup_{\lambda \to 0^+} \frac{v\big(\bar{y} + \lambda(\bar{x}-\bar{y})\big)-v(\bar{y})}{\lambda} = \limsup_{\lambda \to 0^+} \frac{\varphi\big(u\big(\bar{y} + \lambda(\bar{x}-\bar{y})\big)\big)-\varphi(u(\bar{y}))}{\lambda} \\
& = \limsup_{\lambda \to 0^+} \bigg(\varphi'(t(\lambda)) \frac{u\big(\bar{y} + \lambda(\bar{x}-\bar{y})\big)-u(\bar{y})}{\lambda}\bigg) = + \infty >v(\bar{x})-v(\bar{y}).
\tag*{\qed}
\end{align*}
\renewcommand{\qedsymbol}{}
\vspace{-1em}
\end{proof}

We move now to a result which gives better information on the concavity function in a tubular neighborhood of the boundary, whenever this is assumed strongly convex; namely, $v=\varphi(u)$ is shown to be strictly convex near the boundary. 
We refer to \cite[Propositions 2.2, 2.3 and 2.4]{MRS24} (see also \cite[Lemmas 2.1 and 2.4]{Kor83Co}, \cite[Lemma 4.3]{CaFr85}, \cite[Proposition 3.2]{Sak87}).

\begin{proposition}[\cite{MRS24}]
\label{prop_bound_restricted_info}
Let $\Lambda \subset \R^N$, $N\geq 1$, be open, bounded, strictly convex, with $\partial \Lambda \in C^{2,\alpha}$. 
Let moreover $v \in C^1(\overline{\Lambda})$ be such that
\begin{equation}\label{eq_cond_bordo_piani}
v(x) > \nabla v (y) \cdot (x-y) + v(y) 
\end{equation}
for any $y \in \partial \Lambda$ and $x \in \overline{\Lambda}$, $x \neq y$. Then all the global positive maxima of $\mc{C}_v$$_{|\overline{\Lambda} \times \overline{\Lambda} \times [0,1]}$ lie in $\textup{int}\left(\overline{\Lambda} \times \overline{\Lambda}\right)\times [0,1]$.
\end{proposition}

\begin{proposition}[\cite{MRS24}]
\label{prop_info_bound_c2}
Let $\Omega \subset \R^N$, $N\geq 1$, be open, bounded, strongly convex, with $\partial \Omega \in C^{2,\alpha}$. 
Let moreover $u \in C^1(\overline{\Omega}) \cap C^2(\overline{\Omega} \setminus \Omega_{\eta})$ for some $\eta>0$, such that
$$u>0 \; \hbox{ in $\Omega$}, \quad u=0 \; \hbox{ on $\partial \Omega$}, \quad \partial_{\nu}u>0 \; \hbox{ on $\partial \Omega$}.$$
Let $\varphi \in C^2((0,+\infty), \R)$ be such that
$$\varphi''<0<\varphi' \; \hbox{ near $0$}, \quad \lim_{t \to 0^+} \varphi'(t)=+\infty, \quad \lim_{t \to 0^+} \frac{\varphi(t)}{\varphi'(t)} =\lim_{t\to 0^+} \frac{\varphi'(t)}{\varphi'(t)} =0.$$
Set $v=\varphi(u)$. Then there exists $\delta \in (0,\eta)$ such that
\begin{equation}\label{eq_hess_def_neg}
D^2 v(x) \quad \hbox{is definite negative for any $x\in\Omega \setminus \Omega_{\delta}$}
\end{equation}
and \eqref{eq_cond_bordo_piani} holds for any $y \in \Omega \setminus \Omega_{\delta}$ and $x \in \Omega$, $x \neq y$.
\end{proposition}

In the previous result, $D^2 v$ denotes the Hessian matrix of $v$. 
The strategy of application of the previous results is the following: while $\Omega$ is the domain of reference, $\Lambda$ will be a smaller domain, where the inequality \eqref{eq_cond_bordo_piani} holds actually up to the boundary $\partial \Lambda \subset \Omega \setminus \Omega_{\delta}$.
Moreover, this information is inherited by $C^2$ approximating sequences, as shown by the following result.

\begin{proposition}[\cite{MRS24}]
\label{prop_concav_bound_perturb}
Let $\Lambda \subset \R^N$, $N\geq 1$, be convex, bounded and satisfying the interior sphere condition.
Let moreover $v_{\eps}$ be such that, as $\eps \to 0$, $v_{\eps} \to v$ in $C^1(\overline{\Lambda}) \cap C^2(\overline{\Lambda} \setminus \Lambda_{\eta})$ for some $\eta>0$, where $v$ satisfies \eqref{eq_cond_bordo_piani} and \eqref{eq_hess_def_neg}. 
Then, for $\eps$ sufficiently small, 
all the global positive maxima of $\mc{C}_{v_{\eps}}$$_{|\overline{\Lambda} \times \overline{\Lambda} \times [0,1]}$ lie in $\textup{int}\left(\overline{\Lambda} \times \overline{\Lambda}\right)\times [0,1]$.
\end{proposition}

\begin{remark}[Propagation from the boundary]
We notice that an information of the type \eqref{eq_hess_def_neg}, for some class of problems (requiring, in particular, $p=2$, $f(x,t)\equiv g(t)$ and $g(0)>0$, such as the semilinear torsion problem) automatically implies the concavity of the solution on the whole domain \cite{KeMn93, Ste22, ChWe23}.
\end{remark}

 \subsection{Concavity and perturbed concavity in the interior}
\label{sec_concav_interior}

We deal now with the information on the concavity function in the interior of $\Omega$; here the equation solved by $u$ plays its role.
We start by recalling some results on exact concavity, see \cite[Theorem 3.1]{Ken85} and \cite[Proposition 2.1]{MRS24}.

\begin{theorem}
\label{thm_concav_inter_class}
Let $\Omega \subset \R^N$, $N\geq 2$, be open, bounded, 
convex, and let $v \in C^2(\Omega) \cap C(\overline{\Omega})$ be a solution of
$$-\sum_{i,j} a_{ij}(\nabla v) \partial_{ij} v = b(x,v,\nabla v) \quad \hbox{in $\Omega$}.$$
Assume $a_{ij}$ is uniformly elliptic, that is 
\begin{equation}\label{eq_strict_elliptic}
 \sum_{i,j} a_{ij}(\xi) \eta_i \eta_j \geq C |\eta|^2 \quad \hbox{for each $\xi \in (\nabla v)(\Omega)$ and $\eta \in \R^N$}
 \end{equation}
and 
$$t \in v(\Omega) \mapsto b(x,t, \xi) \; \hbox{nonincreasing $\quad$ for $\xi \in (\nabla v)(\Omega)$},$$
$$(x,t) \in \Omega \times v(\Omega) \mapsto b(x, t, \xi) \; \hbox{jointly harmonic concave $\quad$ for $\xi \in (\nabla v)(\Omega)$}.$$
Assume moreover one of the following:
\begin{itemize}
\item $t \in v(\Omega) \mapsto b(x, t, \xi)$ strictly decreasing for $\xi \in (\nabla v)(\Omega)$,
\item $(x,t) \in \Omega \times v(\Omega) \mapsto b(x, t, \xi) $ strictly jointly harmonic concave for $\xi \in (\nabla v)(\Omega)$,
\item $a_{ij}$ and $b$ smooth enough, namely $a_{ij} \in C^1(\R^N)$, $\nabla_x b, \nabla_{\xi} b \in L^{\infty}_{loc}(\Omega\times \R\times\R^N)$.
\end{itemize}
Let $(\bar{x},\bar{y},\bar{\lambda}) \in \Omega \times \Omega \times [0,1]$ be a maximum of $\mc{C}_v$$_{|\overline{\Omega} \times \overline{\Omega} \times [0,1]}$. Then 
$$\nabla v(\bar{x}) = \nabla v(\bar{y}) = \nabla v \big(\bar{\lambda}\bar{x} + (1-\bar{\lambda})\bar{y}\big)$$
and 
$\mc{C}_v(\bar{x},\bar{y},\bar{\lambda}) \leq 0$. 
\end{theorem}

\begin{proof}
The same result for $b$ strictly decreasing is given in \cite[Theorem 3.1]{Ken85} (see also \cite[Theorem 3.13]{Kaw85R}), from whose proof it is clear that $b$ strictly harmonic concave works as well. 
To deal with the nonstrict case, the problem under $a_{ij} \in C^{1,\alpha}_{loc}(\R^N)$, $b(\cdot, t, \cdot) \in C^{1,\alpha}_{loc}(\Omega \times \R^N)$ for some $\alpha \in (0,1)$ has been considered in \cite[Lemma 1.5 and Theorem 1.3]{Kor83Co} and \cite[Lemma 3.2]{CaSp82}, where the proof is based on a perturbation argument (see also \cite[proof of Proposition 2.8]{BuSq20}). 
A different and more direct proof, which allows to assume less regularity, has been given in \cite[Proposition 2.1]{MRS24} (see also \cite[Theorem 2.1]{GrPo93}). 
\end{proof}

We recall now some theorems about perturbed concavity. 
To state the results we will use the following notation: for $u: \Omega \to \R$ and $b: \Omega \times \R \to \R$ we write
\begin{align*}
\mc{JC}_{b(\cdot, u(\cdot))}(x,y,\lambda) := & \, \mc{JC}_{b}\big((x,u(x)), (y,u(y)), \lambda\big) \\
=& \, \lambda b(x,u(x)) + (1-\lambda) b(y,u(y)) - b\big(\lambda x + (1-\lambda)y, \lambda u(x) + (1-\lambda)u(y) \big),
\end{align*}
\begin{align*}
\mc{HC}_{b(\cdot, u(\cdot))}(x,y,\lambda) := & \, \mc{HC}_{b}\big((x,u(x)), (y,u(y)), \lambda\big) \\
=& \, \frac{b(x,u(x)) b(y,u(y))}{\lambda b(y,u(y)) + (1-\lambda) b(x,u(x))} - b(\lambda x + (1-\lambda)y, \lambda u(x) + (1-\lambda)u(y) \big).
\end{align*}

\smallskip

The following theorem is given in \cite[Lemmas 2.3 and 2.9]{BuSq20}; see also \cite[Theorems 2.2 and 2.3]{ABCS23}.%
\footnote{We highlight that in \cite{BuSq20, ABCS23} a different notation for $\mc{JC}$ and $\mc{HC}$ has been used: the concavity function has opposite sign (and also a reflection in $\lambda$ in \cite{ABCS23}), while the equation has the form $a_{ij}(\nabla v) = b(x,v,\nabla v)$. To obtain what we state now, it is sufficient to consider $v \leadsto -v$, $a_{ij}(\xi) \leadsto a_{ij}(-\xi)$, $b(x,t,\xi) \leadsto b(x,-t,-\xi)$; in particular, the monotonicity in $t$ of the source changes.}
See Section \ref{sec_eigen_phi_unbound} for some comments on the case $\mu=0$.

\begin{theorem}[\cite{BuSq20}]
\label{thm_approx_concav_bucsquas}
Let $\Omega \subset \R^N$ be open, convex, and let $v \in C^2(\Omega) \cap C(\overline{\Omega})$ be a solution of 
$$-\sum_{i,j} a_{ij}(\nabla v) \partial_{ij} v = b(x,v,\nabla v)$$
with $a_{ij}$ symmetric and uniformly elliptic. 
Assume that $\mc{C}_v$ assume a positive interior maximum $(\bar{x},\bar{y},\bar{\lambda}) \in \Omega \times \Omega \times [0,1]$, in particular, set $\bar{z}:=\bar{\lambda} \bar{x} + (1-\bar{\lambda})\bar{y}$, we have $v(\bar{z}) < \bar{\lambda} v(\bar{x}) + (1-\bar{\lambda}) v(\bar{y}).$ 
Then 
$$\nabla v(\bar{x})=\nabla v(\bar{y}) =\nabla v(\bar{z})=:\bar{\xi}.$$
Assume moreover that $b$ strictly decreases over the interested segment, that is
$$\partial_t b(\bar{z}, t, \bar{\xi}) \leq -\mu <0 \quad \hbox{for $t \in [v(\bar{z}), \bar{\lambda} v(\bar{x}) + (1-\bar{\lambda})v(\bar{y})]$}.$$
Then
$$\mc{C}_v(\bar{x},\bar{y},\bar{\lambda}) \leq \frac{1}{\mu} \mc{HC}_{b(\cdot, v(\cdot), \bar{\xi})}(\bar{x},\bar{y},\bar{\lambda}) 
 \leq \frac{1}{\mu} \mc{JC}_{b(\cdot, v(\cdot), \bar{\xi})} (\bar{x},\bar{y},\bar{\lambda}).$$
\end{theorem}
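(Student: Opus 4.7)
First, I would read off the gradient equalities from the first-order optimality conditions. Since $\mc{C}_v(\bar x,\bar y,\lambda)$ vanishes at $\lambda\in\{0,1\}$ while being assumed strictly positive at $\bar\lambda$, one automatically has $\bar\lambda\in(0,1)$. Differentiating $\mc{C}_v$ at $(\bar x,\bar y,\bar\lambda)$ in the $x$-coordinates gives $\bar\lambda\nabla v(\bar x)-\bar\lambda\nabla v(\bar z)=0$, where $\bar z:=\bar\lambda\bar x+(1-\bar\lambda)\bar y$, and similarly in the $y$-coordinates $\nabla v(\bar y)=\nabla v(\bar z)$; hence all three gradients coincide with a common value $\bar\xi$.

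For the quantitative bound, the partial Hessian of $(x,y)\mapsto\mc{C}_v(x,y,\bar\lambda)$ at $(\bar x,\bar y)$ is negative semidefinite. I would test it against a two-parameter family of directions $(aw,bw)$ with $a,b>0$ and $w\in\R^N$; the $xx$-, $yy$- and $xy$-blocks are, respectively, $\bar\lambda D^2v(\bar x)-\bar\lambda^2 D^2v(\bar z)$, $(1-\bar\lambda)D^2v(\bar y)-(1-\bar\lambda)^2 D^2v(\bar z)$ and $-\bar\lambda(1-\bar\lambda)D^2v(\bar z)$, and the binomial identity $a^2\bar\lambda^2+2ab\bar\lambda(1-\bar\lambda)+b^2(1-\bar\lambda)^2=(a\bar\lambda+b(1-\bar\lambda))^2$ collapses the expansion to
\[ a^2\bar\lambda\, D^2v(\bar x)+b^2(1-\bar\lambda)\,D^2v(\bar y)\,\preceq\,\bigl(a\bar\lambda+b(1-\bar\lambda)\bigr)^2 D^2v(\bar z) \]
as quadratic forms on $\R^N$. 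Taking the Frobenius inner product against the positive definite matrix $A:=(a_{ij}(\bar\xi))$ preserves this sign, and evaluating the equation at the three points (all with gradient $\bar\xi$) converts $\sum_{i,j}a_{ij}(\bar\xi)\partial_{ij}v(p)$ into $-b(p,v(p),\bar\xi)$. Writing $B_1,B_2,B_0$ for the values of $b$ at $\bar x,\bar y,\bar z$, this produces $\bigl(a\bar\lambda+b(1-\bar\lambda)\bigr)^2 B_0\leq a^2\bar\lambda B_1+b^2(1-\bar\lambda)B_2$ for every $a,b>0$; minimizing the right-hand side under the normalization $a\bar\lambda+b(1-\bar\lambda)=1$ is an elementary Cauchy--Schwarz (or Lagrange) problem whose optimum equals $B_1B_2/\bigl(\bar\lambda B_2+(1-\bar\lambda)B_1\bigr)$, so
\[ B_0\leq \frac{B_1B_2}{\bar\lambda B_2+(1-\bar\lambda)B_1}. \]

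To close, I would invoke the strict decay $\partial_tb(\bar z,\cdot,\bar\xi)\leq-\mu$ on the segment $[v(\bar z),\bar v^*]$, where $\bar v^*:=\bar\lambda v(\bar x)+(1-\bar\lambda)v(\bar y)>v(\bar z)$ by positivity of the maximum; this yields $B_0\geq b(\bar z,\bar v^*,\bar\xi)+\mu\,\mc{C}_v(\bar x,\bar y,\bar\lambda)$, and inserting it in the previous display rearranges exactly to $\mu\,\mc{C}_v(\bar x,\bar y,\bar\lambda)\leq \mc{HC}_{b(\cdot,v(\cdot),\bar\xi)}(\bar x,\bar y,\bar\lambda)$, with the $\mc{JC}$ bound following since the harmonic mean is dominated by the arithmetic mean. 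The main obstacle will be the optimization step, which genuinely requires $B_0,B_1,B_2>0$ for the harmonic mean to be the correct minimum (otherwise the optimum degenerates and one only recovers a convex-combination bound, i.e.\ the weaker $\mc{JC}$ inequality). I expect this positivity to be automatic in the harmonic-concavity regime the statement is targeted at -- and it can likely be enforced by exploiting $\partial_tb\leq -\mu$ together with $\mc{C}_v(\bar x,\bar y,\bar\lambda)>0$ -- but isolating it cleanly, or splitting the argument into the strictly positive and the degenerate cases, will be the most delicate bookkeeping.
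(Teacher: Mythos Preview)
The paper does not prove this theorem; it is quoted from \cite[Lemmas 2.3 and 2.9]{BuSq20} (see also \cite{ABCS23}). Your argument is precisely the Korevaar--Kennington maximum-principle scheme that underlies those references: first-order conditions at an interior maximum of $\mc{C}_v$ force the three gradients to coincide, the two-parameter second-order test $(aw,bw)$ produces the weighted Hessian inequality, tracing against the positive-definite coefficient matrix $A=(a_{ij}(\bar\xi))$ converts Hessians into values of $b$ via the equation, and the Cauchy--Schwarz optimization in $(a,b)$ gives the harmonic-mean bound on $B_0$. The final use of $\partial_t b\leq-\mu$ on $[v(\bar z),\bar v^*]$ is also exactly how the cited works close the estimate.

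Your caveat about positivity of $B_1,B_2$ is well placed and reflects the situation in \cite{BuSq20}: the $\mc{HC}$ conclusion is only meaningful when the harmonic mean is defined, and in all the applications of this paper one works under the positivity condition \eqref{eq_cond_posit} which guarantees $b>0$. Without positivity one simply takes $a=b=1$ in your inequality $(a\bar\lambda+b(1-\bar\lambda))^2B_0\leq a^2\bar\lambda B_1+b^2(1-\bar\lambda)B_2$ and obtains directly $B_0\leq\bar\lambda B_1+(1-\bar\lambda)B_2$, i.e.\ the $\mc{JC}$ bound, so the second inequality in the statement holds unconditionally. This case split is exactly what the literature does, and your plan already anticipates it.
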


\begin{remark}[Comparison with mid-concavity]
\label{rem_mid_concav}
It is known that if $v$ is continuous on an open set, then
$$\mc{C}_v(x,y,\lambda) \leq 0 \; \hbox{ for each $\lambda \in [0,1]$} \iff \mc{C}_v(x,y,\tfrac{1}{2})\leq 0;$$
that is, $v$ is concave if and only if it is \emph{mid-concave}
$$\mc{C}^m_v(x,y):= \frac{v(x)+v(y)}{2} - v\left(\frac{x+y}{2}\right) \leq 0.$$
It is thus possible to develop the above theory with $\lambda$ fixed to $\frac{1}{2}$, as done e.g. in \cite{Kaw85R, GrPo93, Juu10}, but we use here the full $\mc{C}_v$, since the statements on the perturbed concavity are stronger.
We anyway highlight some differences now. 

The arguments of Remark \ref{rem_strong_assump_nderiv} still apply, thus Corollary \ref{corol_concav_bounda} holds true. 
In \cite[Theorem 2.1]{GrPo93} Theorem \ref{thm_concav_inter_class} is shown for $\mc{C}^m$, but the argument can be extended to $\mc{C}$ \cite[proof of Proposition 2.1]{MRS24}. 
See also \cite[Theorems 3.4 and 3.5]{Juu10} for a result on viscosity solutions.

Moreover, in \cite[Lemma 3.2]{GrPo93} (see also \cite[Lemma 3.12]{Kaw85R}), when $v$ negatively explodes on the boundary (that is the case of $v=\log(u)$), the authors provide a tool which ensures that $\mc{C}^m_v$ cannot get positive while approaching the boundary, in the sense: for any $x_n, y_n \in \Omega$
\begin{equation}\label{eq_distante_bordo}
d\big( (x_n, y_n), \partial(\Omega \times \Omega)\big) \to 0 \implies \limsup_{n \to +\infty} \mc{C}^m_v (x_n, y_n) \leq 0;
\end{equation}
here they use that $\Omega$ is strictly convex and $\lambda = \frac{1}{2}$ in a crucial way; see anyway \cite[Lemma 1.30]{Mor24} for a generalization.

This information is not necessary when working with exact concavity, since the argument is set on a fixed $\Omega_{\delta}$, where $v=\varphi(u)$ is bounded. 
On the other hand, this a priori information on the whole $\Omega$ is indeed used for perturbed concavity: thus, to get a perturbed concavity result on $\log(u)$ we need to work with $\mc{C}^m$; see Theorem \ref{thm_weigh_eigen_sigma}. 
By \cite[Corollary 1]{NgNi93}, from a bound on $\mc{C}^m_{\log(u)}$ we can obtain a bound also on $\mc{C}_{\log(u)}$ up to doubling the error; this still allows to apply Hyers-Ulam Theorem \cite{HyUl52}. See Remark \ref{rem_varphi_unbounded} for details.
\end{remark}

\section{The approximation argument}
\label{sec_approx_argu}

We set up now our approximation argument, inspired by \cite{Sak87, BMS22, MRS24}. 
The aim of this Section (Steps \hyperlink{Step1}{\rm{1}}--\hyperlink{Step8}{\rm{8}}) is to build a regularized problem to whose solutions the concavity results in Section \ref{sec_diff_bound} apply (up to a proper choice of the nonlinearity and of the transformation), and whose solutions convergence to the one of the original problem. 
In this process, we need to deal also with the dependence on $x$.
Since the approximation problem can be adapted to a great variety of nonlinearities (which go beyond the examples we treat in this paper), we keep the argument for a general $f=f(x,t)$ (set $F(x,t):= \int_0^t f(x,\tau) d \tau$)
\begin{equation}\label{eq_general_fxu}
\begin{cases}
-\Delta_p u = f(x,u) & \quad \hbox{ in $\Omega$},\\ 
u >0 & \quad \hbox{ in $\Omega$}, \\
u =0 & \quad \hbox{ on $\partial \Omega$},
\end{cases}
\end{equation}
 with some additional assumptions listed below; see anyway Sections \ref{sec_gener_result}, \ref{sec_pertub_concav} and \ref{sec_disc_assumpt} for the assumptions that will be considered in our main results Corollaries \ref{corol_gen_exact_concav}-\ref{corol_gen_perturb_concav}. 
 Here we assume:
\begin{itemize}
\item[(f1)]\hypertarget{(f1)} sub $p$-growth: $|f(x,t)| \leq C(t^{p-1}+1)$;
\item[(f2)]\hypertarget{(f2)} Hopf boundary lemma holds: for the sake of simplicity we will assume $f(x,t)>0$ for $t>0$;%
\footnote{More general cases could be treated, for example, $f(x,t) \geq - \zeta(t)$ with $\zeta(0)=0$, $\zeta$ continuous, nondecreasing and one of the following holds: $\zeta(t)=0$ for some $t \in [0, t_0)$ or $\int_0^1 \frac{1}{(\zeta(t)t)^{1/p}}=+\infty$; in particular, one can assume $f(x,t) \geq - C t^{p-1}$.}
\item[(f3)]\hypertarget{(f3)} regularity of solutions, ensured by: $f \in C^{1,\alpha}_{loc}(\Omega \times (0,+\infty))$, and $f(\cdot, t) \in C^{0,\alpha}(\overline{\Omega})$ for $t>0$. 
\end{itemize}
We consider also an invertible transformation $\varphi = \varphi(t)\in C^{2,\alpha}_{loc}((0,+\infty))$, such that
\begin{equation}\label{eq_def_phi_trans}
\tag{$\varphi_1$}
\lim_{t \to 0^+} \varphi'(t)=+\infty, \quad \varphi'' < 0 < \varphi' \, \hbox{ near $0$}, \quad \lim_{t\to 0^+} \frac{\varphi(t)}{\varphi'(t)} = \lim_{t \to 0^+} \frac{\varphi'(t)}{\varphi''(t)} =0, 
\end{equation}
and the following technical assumptions hold
\begin{equation}\label{eq_phi_trans_techn}
\tag{$\varphi_2$}
(\varphi')^{-2} \in C^{1,\alpha}_{loc}((0,+\infty)), \quad \big|\varphi''(t) (\varphi'(t))^{-{p-1}}\big|\leq C\left(1+|t|^{p-1}\right) \; \hbox{ for $t>0$.}
\end{equation}
Notice that $\psi:=\varphi^{-1}: \R \to (0,+\infty)$, but we are not requiring $\varphi$ to be positive, nor well defined in $0$.
In this Section we also require: 
\begin{itemize}
\item $\Omega$ strongly convex and with $\partial \Omega \in C^{2,\alpha}$,
\end{itemize}
and
\begin{itemize}
\item[(fs)]\hypertarget{(fs)}
coercivity and uniqueness, ensured by: $|f(x,t)| \leq C(1+t^{q-1})$ for some $q \in (0,p-1)$, and $t \mapsto \frac{f(x, t)}{t^{p-1}}$ strictly decreasing. 
\end{itemize}
We refer to Remark \ref{rem_weighted_eigenfunctions} for the eigenfunction case $q=p-1$, while to Section \ref{sec_gener_result} for the case of general convex $\Omega$.

\smallskip
The approximation argument is developed in several steps. 
\medskip

\textbf{Step 1.} \hypertarget{Step1}
First we observe that, under our assumptions -- in particular \hyperlink{(fs)}{\rm{(fs)}} -- actually a positive solution of the problem exists by a global minimization process: see e.g. \cite[Theorem 2.1, Section 5.2]{LaUr66}. 
If $p>N$ then clearly $u\in C^{0,1-\frac{N}{p}}(\Omega) \subset L^{\infty}(\Omega)$. Assume thus $p\leq N$ (see also Remark \ref{rem_weighted_eigenfunctions}). 
Set 
$$G(x,t,\xi):=\frac{1}{p}|\xi|^p - F(x,t),$$
$u$ is a minimizer of $v \mapsto \int_{\Omega} G(x,v,\nabla v)$. 
Thanks to \cite[Theorem 3.2, Section 5.3]{LaUr66}, we have that the global minimizer $u$ is in $L^{\infty}(\Omega)$.

\medskip

\textbf{Step 2.} \hypertarget{Step2}
More generally, by \cite[Theorem 2.1]{BPS22} (see also \cite[Propositions 1.2 and 1.3]{GuVe89}), all the solutions belong to $L^{\infty}(\Omega)$, and thus, being $\partial \Omega \in C^{1,\alpha}$, thanks to \cite[Theorem 1]{Lie88} (see also \cite[Corollary 1.1]{GuVe89}), all the solutions are in $C^{1,\beta}(\overline{\Omega})$ for some $\beta = \beta(p, N) \in (0,1)$; thus uniqueness holds by Lemma \ref{lem_uniquen}.
In particular, all the solutions are minima.

\medskip

\textbf{Step 3.} \hypertarget{Step3}
We already observed that $u\in C^{1,\beta}(\overline{\Omega})$. More precisely, we have
$$\norm{u}_{C^{1,\beta}(\overline{\Omega})} \leq C\big(p, N, \norm{u}_{\infty}, \norm{f(\cdot, u)}_{\infty}, \Omega\big).$$
By Hopf Lemma \cite[Theorem 5]{Vaz84} (see also \cite[Lemma A.3]{Sak87}, \cite[Proposition 2.2]{GuVe89}) we have 
$$ \partial_{\nu} u >0 \quad \hbox{on $\partial \Omega$};$$
thus there exists $\eta>0$ such that
$$\inf_{\Omega_{\eta}} u > 0, \quad \inf_{\Omega \setminus \Omega_{\eta}} |\nabla u|>0. $$

We observe that, being $\Omega$ convex, we have $u\in W^{2,2}_{loc}(\Omega \setminus \Omega_{\eta} )$ by \cite{Kad64} (see also \cite[Section 4.3, Theorem 5.2]{LaUr66}). 
Thus, being $\partial \Omega \in C^{2,\alpha}$ and $f(\cdot, t) \in C^{0,\alpha}(\overline{\Omega})$ we obtain thanks to \cite[Section 4.6, Theorem 6.3]{LaUr66} 
$$u \in C^2(\overline{\Omega} \setminus \Omega_{\eta}).$$
We consider moreover $\delta \in (0,\eta)$ sufficiently small to be fixed (see Step \hyperlink{Step7}{\rm{7}}) such that
$$\Omega_{\eta} \subset \Omega_{\delta}\subset \Omega;$$
with smooth boundaries. Similarly, for $k=1...5$, we may assume $\Omega_{\delta/k}$ convex and smooth (see Proposition \ref{prop_boundar_approximat}); notice that we actually can substitute these sets with nicer suitable approximations, if needed.

\medskip

\textbf{Step 4.} \hypertarget{Step4}
Consider the functional
$$J(v):=\frac{1}{p} \int_{\Omega} |\nabla v|^p - \int_{\Omega} F(x,v) $$
to which $u$ is a critical point. We consider a regularization $I_{\eps} \approx J$ defined as follows: let us introduce a function $K=K(t)$ -- to be fixed, see Step \hyperlink{Step8}{\rm{8}} -- such that 
\begin{itemize}
\item $K\geq 0$, and $K>0$ in $(0,+\infty)$,
\item $K \in C^{1}((0,+\infty))$, and $K^{\frac{2}{p}} \in C^{1,\alpha}_{loc}((0,+\infty))$, 
\item $|K'(t)| \lesssim 1+t^{p-1}$ for $t > 0$.
\end{itemize}
Then define, for each $\eps>0$
$$I_{\eps}(v):= \frac{1}{p} \int_{\Omega} \left( \eps K(v)^{\frac{2}{p}} + |\nabla v|^2\right)^{\frac{p}{2}} - \int_{\Omega} F(x,v);$$
clearly $I_0 \equiv J$. 
Similarly to $J$, for each $\eps>0$ we gain the existence of $u_{\eps}\in W^{1,p}(\Omega)$, global minimizer of $I_{\eps}$.

\smallskip
\textbf{Step 4.1.} \hypertarget{Step4.1}
By exploiting the equicoercivity and the fact that $u_{\eps}$ are minima, we obtain for $\eps \in (0,1)$
$$C \norm{u_{\eps}}_{W^{1,p}_0(\Omega)}^p \leq I_{\eps}(u_{\eps}) \leq I_{\eps}(0) \leq I_1(0)$$
thus equibounded, which means that $u_{\eps} \wto \bar{u}$ in $W^{1,p}_0(\Omega)$. 
As a consequence, exploiting that $J$ is lower semicontinuous, $J \leq I_{\eps}$, $u_{\eps}$ are minima, $I_{\eps} \to J$ (by dominated convergence theorem) and $u$ is a minimum, we obtain
$$J(\bar{u}) \leq \liminf_{\eps \to 0} J(u_{\eps}) \leq \liminf_{\eps \to 0} I_{\eps}(u_{\eps}) \leq \liminf_{\eps \to 0} I_{\eps}(u) = J(u) \leq J(\bar{u}).$$
Being $J(u)=J(\bar{u})$ and the minimum unique, we have $\bar{u}=u$ and thus $u_{\eps} \wto u$ in $W^{1,p}_0(\Omega)$.
Moreover we have, by the above computation, $J(u_{\eps}) \to J(u)$ which, together with $\int_{\Omega} F(x,u_{\eps}) \to \int_{\Omega} F(x,u)$ (given by the $p$-subhomogeneous growth), imply $\norm{\nabla u_{\eps}}_p \to \norm{\nabla u}_p$. 
Thus we have 
$$u_{\eps} \to u \quad \hbox{in $W^{1,p}_0(\Omega)$.}$$

\smallskip

If $p>N$ we have $W^{1,p}(\Omega) \hookrightarrow C^{0,1-\frac{N}{p}}(\Omega)$, thus $\norm{u_{\eps}}_{C^{0,1-\frac{N}{p}}(\Omega)}$ is equibounded. 
In the following two steps, hence, we can restrict to $p\leq N$.

\smallskip
\textbf{Step 4.2.} \hypertarget{Step4.2}
Set 
$$G_{\eps}(x,t,\xi):=\frac{1}{p}\left( \eps K(t)^{\frac{2}{p}} + |\xi|^2\right)^{\frac{p}{2}} - F(x,t),$$
$u_{\eps}$ is a minimizer of $v \mapsto \int_{\Omega} G_{\eps}(x,v,\nabla v)$.
Due to the assumptions on $K$, 
by \cite[Theorem 3.2, Section 5.3]{LaUr66} we have that
the minimizers $u_{\eps}$ are in $L^{\infty}(\Omega)$ for $\eps \in (0,1)$. More precisely
$$\norm{u_{\eps}}_{\infty}\leq C\big(\norm{u_{\eps}}_{q}, p, \textup{meas}(\Omega)\big),$$
for $q=p^*$ (if $p<N$) or some $q< \infty$ (if $p=N$) and thus, by Step \hyperlink{Step4.1}{\rm{4.1}}, they are equibounded.
As a consequence 
$$ F(x,u_{\eps}) \leq C_1, \quad K(u_{\eps}) \leq C_2 \quad \hbox{ in $\Omega$}$$
for suitable $C_1, C_2>0$.

\smallskip

\textbf{Step 4.3.} \hypertarget{Step4.3}
By Step \hyperlink{Step4.2}{\rm{4.2}} we have
$$ \frac{1}{p} |\xi|^p - C \leq G_{\eps}(x,t,\xi) \leq C |\xi|^p + C$$ 
for $|t|\leq \sup_{\eps} \norm{u_{\eps}}_{\infty}$, 
which by \cite[Theorem 3.1]{GiGi82} directly implies that the minimizers $u_{\eps}$ are Hölder continuous. 
To obtain a more explicit equibound, we see $u_{\eps}$ as solutions of the equation 
$$-\dive_{\xi}\big((\nabla_{\xi} G_{\eps})(x,u_{\eps}, \nabla u_{\eps})\big) + (\partial_t G_{\eps})(x,u_{\eps}, \nabla u_{\eps})=0 \quad \hbox{in $\Omega$}$$
that is
 $$-\dive\big(A^{\eps}(u_{\eps},\nabla u_{\eps})\big) = f_{\eps}(x,u_{\eps},\nabla u_{\eps}) \quad \hbox{ in $\Omega$}$$
where
$$A^{\eps}(t,\xi):=(\eps K(t)^{\frac{2}{p}} + |\xi|^2)^{\frac{p-2}{2}} \xi,$$
$$f_{\eps}(x,t,\xi):=f(x,t) - \frac{\eps}{p} (\eps + |\xi|^2 K(t)^{-\frac{2}{p}})^{\frac{p-2}{2}} K'(t).$$
We can thus apply \cite[Theorem 4.1, Section 5.4]{LaUr66} (see also \cite[Corollary 4.2]{Tru67}, \cite[Theorem 2.3]{BPS22})
and obtain the existence of a $\beta_0=\beta_0(p, K, \sup_{\eps}\norm{u_{\eps}}_{\infty}) \in (0,1)$ and 
 $C=C(p, K, \sup_{\eps}\norm{u_{\eps}}_{\infty}, \delta)$, such that
$$\norm{u_{\eps}}_{C^{0,\beta_0}(\overline{\Omega_{\delta/5}})}\leq C; $$
we can assume $\beta_0 < \beta$.%
\footnote{Notice that actually $ \norm{u_{\eps}}_{C^{0,\beta_0}(\overline{\Omega})}\leq C'=C'(p, K, \sup_{\eps}\norm{u_{\eps}}_{\infty}, \Omega)$ depending on the geometry of $\Omega$.}

\smallskip

\textbf{Step 4.4.} \hypertarget{Step4.4}
By exploiting the convergence in Step \hyperlink{Step4.1}{\rm{4.1}} and the uniform estimate in Step \hyperlink{Step4.3}{\rm{4.3}} (or at the end of Step \hyperlink{Step4.1}{\rm{4.1}}), we obtain
$$u_{\eps} \to u \quad \hbox{in $C^{0,\beta}(\overline{\Omega_{\delta/5}})$}.$$
By this convergence and Step \hyperlink{Step3}{\rm{3}}, there exists $C=C(\delta)$ such that
$$\tfrac{1}{C} \leq u_{\eps} \leq C \quad \hbox{ in $\Omega_{\delta/4}$}$$
for $\eps$ small. As a consequence 
$$\tfrac{1}{C_1} \leq F(x,u_{\eps}) \leq C_1, \quad \tfrac{1}{C_2} \leq K(u_{\eps}) \leq C_2 \quad \hbox{ in $\Omega_{\delta/4}$}$$
for suitable $C_1, C_2>0$.

\medskip

\textbf{Step 5.} \hypertarget{Step5}
Set $a^{\eps}_{ij}(t,\xi):= \partial_{\xi_j} A^{\eps}_i (t,\xi) $, and assumed $ \partial_t K(u_{\eps}) \leq C_2$, by Step \hyperlink{Step4.3}{\rm{4.3}} and Step \hyperlink{Step4.4}{\rm{4.4}}
we can apply \cite{Dib83, Tol84} (see also \cite{Lie88}, \cite[Section 4.6]{LaUr66}) 
to get the existence of $\beta_2 \in (0, 1)$ such that $u_{\eps} \in C^{1,\beta_2}(\Omega)$ and
$$\norm{u_{\eps}}_{C^{1,\beta_2}(\overline{\Omega_{\delta/3}})} \leq C;$$
here $\beta_2 = \beta_2(\norm{u}_{\infty}, \norm{f(\cdot, u)}_{\infty}, C_2, p)$ and $C =C(\delta, \norm{u}_{\infty}, \norm{f(\cdot, u)}_{\infty}, C_2, p)$.
We can assume $\beta_2 \leq \beta$ and $\beta_2 < \alpha$. 
By Ascoli-Arzelà theorem,
$$u_{\eps} \to u \quad \hbox{in $C^{1,\beta_2}(\overline{\Omega_{\delta/3}})$}.$$
By this convergence and Step \hyperlink{Step3}{\rm{3}}, 
$$\inf_{\Omega \setminus \Omega_{\eta}} |\nabla u_{\eps}|>0, \qquad \sup_{\overline{\Omega_{\delta/3}}} |\nabla u_{\eps}| \leq C'$$
for $\eps>0$ small. 

\medskip

\textbf{Step 6.} \hypertarget{Step6}
By \cite[Section 4.6, Theorem 6.4]{LaUr66} we conclude that there exists $\beta_3 \in (0,1)$, $\beta_3=\beta_3(C_1, C_2, \alpha, \delta)$, such that
$$u_{\eps} \in C^{2,\beta_3}(\Omega_{\delta/3});$$
we can assume $\beta_3 \leq \beta_2$. 
Moreover 
$$\norm{u_{\eps}}_{C^{2,\beta_3}(\overline{\Omega_{\delta/2}}\setminus \Omega_{\delta})} \leq C$$
for some $C= C(C_1, C_2, \alpha, \delta)$. 
From which, by Ascoli-Arzelà theorem, we obtain
$$u_{\eps} \to u \quad \hbox{in $C^1(\overline{\Omega_{\delta/2}}) \cap C^2(\overline{\Omega_{\delta/2}} \setminus \Omega_{\delta})$}.$$

\medskip

\textbf{Step 7.} \hypertarget{Step7}
Consider the transformation $\varphi$ as in \eqref{eq_def_phi_trans}. 

\smallskip

\textbf{Step 7.1.} \hypertarget{Step7.1}
Set $v:= \varphi(u)$, from the estimate in Step \hyperlink{Step4}{\rm{4}} we get
$$\tfrac{1}{C'} \leq v \leq C' \quad \hbox{ in $\Omega_{\delta/4}$},$$
where $C'=C'(\delta)$. 
Moreover, since $u \in C(\overline{\Omega})$, together with $u>0$ in $\Omega$, $u=0$ on $\partial \Omega$ and $\partial_{\nu} u >0$ on $\partial \Omega$, by Corollary \ref{corol_concav_bounda} we obtain that $\mc{C}_v$ cannot attain a maximum (over $ \overline{\Omega} \times \overline{\Omega} \times [0,1]$) on $\partial (\Omega \times \Omega) \times [0,1]$.%
\footnote{Notice that, up to now, $v$ may be unbounded and $\mc{C}_v$ not well defined on the boundary.}

Moreover, since $u \in C^1(\overline{\Omega}) \cap C^2(\overline{\Omega}\setminus \Omega_{\eta})$ by Step \hyperlink{Step3}{\rm{3}}, and applying Proposition \ref{prop_info_bound_c2}, we obtain that, for $\delta$ sufficiently small,
\begin{equation}\label{eq_for_apply_prop}
\begin{cases}
D^2 v <0 &\quad \hbox{in $\Omega \setminus \Omega_{\delta}$},
\\
v(x) - v(x_0) < \nabla v (x_0) \cdot (x-x_0) &\quad \hbox{for each $x_0 \in \Omega \setminus \Omega_{\delta}$, $x \in \Omega\setminus \{x_0\}$}.
\end{cases}
\end{equation}

\smallskip

\textbf{Step 7.2.} \hypertarget{Step7.2}
 Set $v_{\eps}:= \varphi(u_{\eps}).$ 
From the estimates in Step \hyperlink{Step4}{\rm{4}} and Step \hyperlink{Step7.1}{\rm{7.1}} we get
$$\tfrac{1}{C'} \leq v_{\eps} \leq C' \quad \hbox{ in $\Omega_{\delta/4}$}, \qquad |\nabla v_{\eps}| \leq C'' \quad \hbox{ in $\Omega_{\delta/3}$}$$
for some $C'=C'(\delta)>0$ and $C''=C''(\delta)>0$; this, combined with the convergence in Step \hyperlink{Step6}{\rm{6}}, gives 
$$v_{\eps} \to v \quad \hbox{in $C^1(\overline{\Omega_{\delta/2}}) \cap C^2(\overline{\Omega_{\delta/2}} \setminus \Omega_{\delta})$}.$$
Thanks to Proposition \ref{prop_concav_bound_perturb}, $\mc{C}_{v_{\eps}}$ cannot attain its positive maximum (over $ \overline{\Omega_{\delta/2}} \times \overline{\Omega_{\delta/2}} \times [0,1]$) on $\partial (\Omega_{\delta/2} \times \Omega_{\delta/2}) \times [0,1]$, for $\eps$ small.

\medskip

\textbf{Step 8.} \hypertarget{Step8}
Let us consider now the equation satisfied by $v_{\eps}$. Consider $\psi = \varphi^{-1}$, we have $u_{\eps} = \psi(v_{\eps})$.

By assuming \eqref{eq_phi_trans_techn} and \eqref{eq_ipot_aggiunt_phi_eigenf}, we make the following choice
$$K(\psi(t) )\equiv (\psi'(t))^{p},$$
i.e. $K(t) := \frac{1}{(\varphi'(t))^{p}}$. 
With this choice, set 
$$H_{\eps}(\xi):= \left(\eps + |\xi|^2\right)^{\frac{p}{2}}$$
we have
$$-\dive\big( (\nabla H_{\eps})(\nabla v_{\eps})\big) = B_{\eps}(x,v_{\eps}, \nabla v_{\eps})$$
where
\begin{equation}\label{eq_def_Beps}
B_{\eps}(x,t,\xi):= p \frac{f(x,\psi(t))}{(\psi'(t))^{p-1}} + p H_{\eps}(\xi)^{\frac{p-2}{p}} \left((p-1) |\xi|^2 - \eps \right)\frac{\psi''(t)}{\psi'(t)} .
\end{equation}

\begin{remark}[Eigenfunction case]
\label{rem_weighted_eigenfunctions}
Before proceeding to the main proofs, we comment here the case of the $p$-linear growth of $f$, when it has the particular form 
$$f(x,t) = a(x) |t|^{p-2} t.$$
We show here how to adapt the previous steps to this case. 
We further assume
\begin{equation}\label{eq_ipot_aggiunt_phi_eigenf}
\tag{$\varphi s$}
\hbox{if $N\geq 3$ and $p \in (2, N)$, then } \; \big|\varphi''(t) (\varphi'(t))^{-3}\big|\leq C\left(1+|t|^{\frac{N+p}{N-p}}\right) \; \hbox{ for $t>0$,}
\end{equation}
which means, in terms of $K$,
\begin{itemize}
\item if $N\geq 3$ and $p \in (2,N)$, then $\big|K(t)^{-\frac{p-2}{p}} K'(t)\big| \lesssim 1+ t^{\frac{N+p}{N-p}}$ for $t > 0$.
\end{itemize}

\smallskip

\emph{Step \hyperlink{Step1}{1} and \hyperlink{Step4}{4}, existence:} 
both existence results for $J$ and $I_{\eps}$ can be made by considering (recall that $a \in L^{\infty}(\Omega)$)
$$\inf\left\{ J(u) \mid u \in W^{1,p}_0(\Omega), \; \int_{\Omega} a(x) u^p = 1 \right\}, \quad \inf\left\{ I_{\eps}(u) \mid u \in W^{1,p}_0(\Omega), \; \int_{\Omega} a(x) u^p = 1 \right\};$$
we notice that the functionals are coercive on the subspace where the weighted $p$-norm is prescribed. 
Thus we can find Lagrange multipliers $\lambda$ and $\lambda_{\eps}$ and solutions $u$ and $u_{\eps}$ (see e.g. \cite[Theorem 6.3.2]{Ber77}). 

\smallskip

\emph{Step \hyperlink{Step2}{2} and \hyperlink{Step4.1}{4.1}, uniqueness and convergence:} 
we observe that, by compact embeddings, $u_{\eps} \wto \bar{u}$ in $W^{1,p}_0(\Omega)$ implies $u_{\eps} \to \bar{u}$ in $L^p(\Omega)$, thus (being $a$ bounded) $\int_{\Omega} a(x) u_{\eps}^p\to \int_{\Omega} a(x) \bar{u}^p$; in particular, $\int_{\Omega} a(x) \bar{u}^p = 1$. 
To get $u=\bar{u}$ we can thus repeat the same arguments, observing that we have uniqueness up to a scaling (thanks to Lemma \ref{lem_uniquen}) and the noninvariant constraint $\int_{\Omega} a(x) u^p = 1 $.

\smallskip

\emph{Step \hyperlink{Step4.2}{4.2}, equiboundedness for the perturbed problem (when $p\leq N$)}: 
$u_{\eps}$ satisfies the equation
$$-\dive\big(A^{\eps}(u_{\eps},\nabla u_{\eps})\big) = f_{\eps}(x,u_{\eps},\nabla u_{\eps}) \quad \hbox{ in $\Omega$},$$
where $A_{\eps}(t,\xi)=(\eps K(t)^{\frac{2}{p}} + |\xi|^2)^{\frac{p-2}{2}} \xi$ but now
$$f_{\eps}(x,t,\xi):= \lambda_{\eps} a(x) t^{p-1} - \frac{\eps}{p} (\eps + |\xi|^2 K(t)^{-\frac{2}{p}})^{\frac{p-2}{2}} K'(t).$$

First, we observe the equiboundedness of the Lagrange multipliers $\lambda_{\eps}$: indeed, 
for any positive $\varphi \in W^{1,p}_0(\Omega)$, $\lambda_{\eps}$ is given by
$$\lambda_{\eps} = \frac{1}{ \int_{\Omega} a(x) u_{\eps}^{p-1} \varphi} \left( \int_{\Omega} A^{\eps}(u_{\eps}, \nabla u_{\eps}) \cdot \nabla \varphi + \frac{\eps}{p} \int_{\Omega} \left(\eps + |\nabla u_{\eps}|^2 K(u_{\eps})^{-\frac{2}{p}}\right)^{\frac{p-2}{2}} K'(u_{\eps}) \varphi \right). $$
Since $u_{\eps} \to u$ in $W^{1,p}_0(\Omega)$ and almost everywhere, by exploiting that $u_{\eps}$ and $\nabla u_{\eps}$ are dominated by $L^p$ functions, the Young inequality for products and the assumptions on $K$, we obtain that the right-hand side is bounded. 

Next, by the assumptions on $K$, we can apply \cite[Theorem 7.1, Section 4.7]{LaUr66} (see also the proof therein and \cite[Theorem 5.1, Section 2.5]{LaUr66} for the case $p=N$) and get $u_{\eps} \in L^{\infty}(\Omega)$ with
$$\norm{u_{\eps}}_{\infty} \leq C$$
where $C=C(N,p, |\Omega|, \norm{u_{\eps}}_{q})$, $q=p^*$ if $p<N$ and $q<\infty$ if $p=N$, and thus equibounded.
\end{remark}

\subsection{Proof of general results}
\label{sec_gener_result}

We show now that the conclusions of Theorems \ref{thm_main_conc_exact} and \ref{thm_approx_concav_harmonic} hold in more general cases; 
we will furnish the statements of general results for the sake of completeness, even if they appear a bit cumbersome.
We refer to Corollaries \ref{corol_gen_exact_concav} and \ref{corol_gen_perturb_concav} for statements with more explicit assumptions, and to Section \ref{sec_applic} for applications to specific nonlinearities, some of which mentioned in the introduction.

Let us consider the following abstract assumptions on $B_{\eps}$, introduced in \eqref{eq_def_Beps}:
\begin{equation}\label{eq_cond_monoton}
\tag{$B_1$}
t \mapsto B_{\eps}(x,t,\xi) \quad \hbox{nonincreasing}, 
\end{equation}
\begin{equation}\label{eq_cond_strict_monoton}
\tag{$B_2$}
\partial_t B_{\eps} \leq - \mu \quad \hbox{for some $\mu>0$ (uniform in $\eps>0$)},
\end{equation}
\begin{equation}\label{eq_cond_posit}
\tag{$B_3$}
B_{\eps}> 0, 
\end{equation}
\begin{equation}\label{eq_cond_concav}
\tag0{$B_4$}
(x,t) \mapsto B_{\eps}(x,t,\xi) \quad \hbox{jointly harmonic concave}.
\end{equation}
The domain of reference is given by
\begin{equation}\label{eq_domain_refernc_Beps}
(x,t,\xi) \in \Omega_{\delta/2} \times \in v_{\eps} (\Omega_{\delta/2}) \times \nabla v_{\eps}(\Omega_{\delta/2})
\end{equation}
 where $t \in v_{\eps} (\Omega_{\delta/2}) \subset [\tfrac{1}{C'},C'] $ while $|\xi| \in |\nabla v_{\eps}|(\Omega_{\delta/2}) \subset [0, C'']$ 
(for $\eps$ small) thanks to Step \hyperlink{Step7.2}{\rm{7.2}}. The main information given by $C'$ and $C''$ is that $\xi$ is bounded and $t$ is bounded and far from zero, while $x$ is far from the boundary.

For the sake of clearness, at some point (see Sections \ref{sec_pertub_concav} and \ref{sec_disc_assumpt}) we will require a particular form of $f$, that is 
\begin{equation}\label{eq_particular_f_choice}
\tag{$F_1$}
f(x,t) \equiv h(x,t) g(t) + k(x,t) 
\end{equation}
where, set $G(t):=\int_0^t g(\tau) d\tau$, we require
\begin{equation}\label{eq_ipotesi_aggiuntiva_G>0}
\tag{$F_2$}
G(t)>0 \quad \hbox{ for $t>0$}
\end{equation}
and%
\footnote{Notice that, in the regular case $p=2$, that is $\eps=0$, we can allow $h=0$.}
\begin{equation}\label{eq_ipotesi aggiuntiva_h>0}
\tag{$F_3$}
h(x,t) > 0 \quad \hbox{for $t>0$, $x \in \Omega$};
\end{equation}
notice that, differently from $f$, we do not require $h$ to be finite in $t=0$. 
Notice anyway that the arguments may be adapted to other kind of nonlinearities.%
\footnote{Rather than \eqref{eq_particular_f_choice}, other possible choices could be set up according to the specific problem. For instance, $f(x,t)=\zeta(g(t))$ for some suitable function $\zeta$.}
In this setting we will require $g$ and $\psi$ to be related by 
\begin{equation}\label{eq_cond_on_g}
\tag{$F_4$}
\frac{g(\psi(t))}{(\psi'(t))^{p-1}} = \frac{\psi''(t)}{\psi'(t)}
 \end{equation}
that is (up to additive constants and positive multiplicative constants) $\varphi(t) = \int_1^t (G(s))^{-\frac{1}{p}} ds$.

Thus $B_{\eps}$ takes the form
\begin{equation}\label{eq_charact_Beps}
B_{\eps}(x,t,\xi)= p \frac{\psi''(t)}{\psi'(t)} \left( h(x,\psi(t)) + H_{\eps}(\xi)^{\frac{p-2}{p}} \left((p-1) |\xi|^2 - \eps \right) \right) + p \frac{k(x,\psi(t))}{(\psi'(t))^{p-1}}.
\end{equation}
In Section \ref{sec_disc_assumpt} we will give some sufficient conditions on $\psi$, $k$, and $h$ to ensure \eqref{eq_cond_monoton}, \eqref{eq_cond_strict_monoton}, \eqref{eq_cond_posit} and \eqref{eq_cond_concav}.

\subsubsection{Exact concavity.} 


We can state now the main result about exact concavity. See also Corollary \ref{corol_gen_exact_concav} below.

\begin{theorem}[Exact concavity]
\label{thm_gen_exact_concav}
Let $\Omega \subset \R^N$, $N\geq 2$, be open, bounded, convex, with $\partial \Omega \in C^{1,\alpha}$,
and let $p\in (1,+\infty)$.
Let $u$ be a solution of \eqref{eq_general_fxu}, and let $f$ and $\varphi$ satisfy \hyperlink{(f1)}{\rm{(f1)}}--\hyperlink{(f3)}{\rm{(f3)}}, \eqref{eq_def_phi_trans}-\eqref{eq_phi_trans_techn}, and one among \hyperlink{(fs)}{\rm{(fs)}} or $f(x,t) = a(x) |t|^{p-2} t$ and \eqref{eq_ipot_aggiunt_phi_eigenf}.
Assume moreover that \eqref{eq_cond_monoton} and \eqref{eq_cond_concav} hold. 
Then $\varphi(u)$ is concave.
\end{theorem}

\begin{proof}
Assume first $\Omega$ smooth and strongly convex.
We start observing that, exploiting the boundedness of $|\nabla u_{\eps}|$ by Step \hyperlink{Step5}{\rm{5}}, the equation verified by $v_{\eps}$
$$-\sum_{i,j}\tilde{a}^{\eps}_{ij}(\nabla v_{\eps}) \partial_{ij} v_{\eps} = B_{\eps}(x,v_{\eps},\nabla v_{\eps}),$$
where $\tilde{a}^{\eps}_{ij}(\xi) := p (\eps + |\xi|^2)^{\frac{p-4}{2}} \left( (p-2) \xi_i \xi_j + (\eps + |\xi|^2) \delta_{ij}\right)$, is uniformly elliptic.

We check the assumptions of Theorem \ref{thm_concav_inter_class}: by \eqref{eq_cond_monoton}, \eqref{eq_cond_concav} and the regularity assumed on $f$ (see also \eqref{eq_cond_monoton_strict} for the strict monotonicity), together with $v_{\eps} \in C^2(\Omega_{\delta/2})$ given by Step \hyperlink{Step6}{\rm{6}}, we see that Theorem \ref{thm_concav_inter_class} applies and thus $\mc{C}_{v_{\eps}} $ cannot assume a maximum in $\Omega_{\delta/2} \times \Omega_{\delta/2} \times [0,1]$. 
Combined with Step \hyperlink{Step7.2}{\rm{7.2}}, we see that $\mc{C}_{v_{\eps}}\in C(\overline{\Omega_{\delta/2}} \times \overline{\Omega_{\delta/2}} \times [0,1])$ cannot have a positive maximum, which means that $\mc{C}_{v_{\eps}} \leq 0$ on $\Omega_{\delta/2} \times \Omega_{\delta/2} \times [0,1]$. 
By the (pointwise) convergence obtained in Step \hyperlink{Step4}{\rm{4}}, we obtain that $\mc{C}_{v} \leq 0$ on $\Omega_{\delta/2} \times \Omega_{\delta/2} \times [0,1]$.
This is on the other hand true for any $\delta>0$ (small): this means that $\mc{C}_{v} \leq 0$ on $\Omega \times \Omega \times [0,1]$, that is $\varphi(u)$ is concave.

To pass from a smooth, strongly convex domain to a more general convex $\Omega$ (with $\partial \Omega \in C^{1,\alpha}$), we make an approximation process -- based on the uniqueness and minimality of the solution -- as in \cite[Section 4.1]{BMS22} (see also \cite[Section 5]{Sak87}); we give some details in the case of the eigenfunction.
First, being $\Omega$ convex, from \cite[Corollary 4.2]{Tru67} (see also \cite[Corollary 3.7]{CaDe04})
all the solutions belong to $C(\overline{\Omega})$ and thus, being $\partial \Omega \in C^{1,\alpha}$, thanks to \cite[Theorem 1]{Lie88} all the solutions are in $C^{1,\beta}(\overline{\Omega})$; thus uniqueness -- up to scaling -- holds by Lemma \ref{lem_uniquen}.
Let now $u \in W^{1,p}_0(\Omega) \cap C(\overline{\Omega})$ be the solution on $\Omega$ -- obtained by minimization with constraint $\int_{\Omega} a(x) u^p = 1$ -- and let $\Omega^k \subset \Omega $ be a sequence of smooth strongly convex sets which approximate $\Omega$ in the Hausdorff distance (see Proposition \ref{prop_boundar_approximat}), and define $J_k(v):= \frac{1}{p} \int_{\Omega^k} |\nabla v|_p^p - \int_{\Omega^k} a(x) v^p$. 
We consider $u_k \in W^{1,p}_0(\Omega^k)$ critical points of $J_k$ obtained by constrained minimization $\int_{\Omega^k} a(x) u_k^p = 1$, and extend $u_k$ to $\Omega$ by $u_{k}=0$ on $\Omega \setminus \Omega^k$. 

We clearly have $J_k(u_k) = J(u_{k})$ and thus
$$\frac{1}{p}\norm{u_k}_{W^{1,p}_0(\Omega)}^p = J(u_k) +1= J_k(u_k) +1\leq J_k(u_1) +1\leq J_1(u_1)+1 = J(u_1)+1$$
which implies $u_k \wto \bar{u}$ in $W^{1,p}_0(\Omega)$ and strongly in $L^p(\Omega)$; in particular $\int_{\Omega} a(x) \bar{u}^p =1$. 

Let now $\eps_k:= 2 \sup_{\Omega \setminus \Omega^k} u$, and define $v_k:= (\int_{\Omega^k} a(x) (u-\eps_k)_+)^{-1} (u-\eps_k)_+$. 
Notice that $\eps_k \to 0$, since $\max_{\overline{\Omega} \setminus \Omega^k} u = u(x_k)$ with $d(x_k, \partial \Omega) \leq \sup_{x \in \Omega} d(x, \Omega_k) 
 \to 0$. 
Since $v_k \to u $ in $W^{1,p}_0(\Omega)$ we obtain
$$J(\bar{u}) \leq \liminf_{k} J(u_k) = \liminf_{k} J_k(u_k) \leq \liminf_k J_k(v_k) = J(u) \leq J(\bar{u})$$
from which, by uniqueness of the minimizer, $\bar{u}=u$. As a consequence, since the concavity claim holds for each $u_k$, then by pointwise convergence it holds also for $u$.
\end{proof}

\subsubsection{Perturbed concavity} 
\label{sec_pertub_concav}

We pass now to the statement and proof of a general result about perturbed concavity; see also Corollary \ref{corol_gen_perturb_concav} below.
We assume here 
\begin{equation}\label{eq_cond_phi_boun}
\tag{$\varphi_3$}
\varphi \in C(\overline{\Omega});
\end{equation}
 see Remark \ref{rem_varphi_unbounded} for more general $\varphi$ unbounded, and Section \ref{sec_eigen_phi_unbound} for the logarithmic case. 
For the sake of simplicity, we also assume \eqref{eq_particular_f_choice}, \eqref{eq_ipotesi aggiuntiva_h>0}, \eqref{eq_cond_on_g} and
\begin{equation}\label{eq_cond_sempl_perturb}
\tag{$F_5$}
k(x,t) \equiv 0\quad \hbox{and} \quad h(x,t) \equiv h(x)
\end{equation}
and set, for each $\delta>0$, 
\begin{equation}\label{eq_def_mfrak}
\mathfrak{m}_{\delta}:= \min_{x \in \overline{\Omega_{\delta/2}}} h(x)>0, \quad \mathfrak{M}_{\delta} := \max_{x \in \overline{\Omega_{\delta/2}}} h(x)>0,
\end{equation}
in light of \eqref{eq_ipotesi aggiuntiva_h>0}.
We also assume 
\begin{equation}\label{eq_cond_harm_concave}
\tag{$F_6$}
t \mapsto \frac{\psi''(t)}{\psi'(t)} \quad \hbox{ harmonic concave}.
\end{equation}

\begin{theorem}[Perturbed concavity]
\label{thm_gen_perturb_concav}
Let $\Omega \subset \R^N$, $N\geq 2$, be bounded, strongly convex, with $\partial \Omega \in C^{2,\alpha}$, and let $p\in (1,+\infty)$.
Let $u$ be a solution of \eqref{eq_general_fxu}, and let $f$ and $\varphi$ satisfy \hyperlink{(f1)}{\rm{(f1)}}--\hyperlink{(f3)}{\rm{(f3)}}, \eqref{eq_def_phi_trans}-\eqref{eq_phi_trans_techn}, \eqref{eq_particular_f_choice}, \eqref{eq_ipotesi aggiuntiva_h>0}, \eqref{eq_cond_on_g} and \hyperlink{(fs)}{\rm{(fs)}}, together with \eqref{eq_cond_phi_boun}, \eqref{eq_cond_sempl_perturb} and \eqref{eq_cond_harm_concave}. 
Assume moreover that \eqref{eq_cond_strict_monoton} holds. 
Then $v=\varphi(u)$ satisfies 
\begin{equation}\label{eq_general_error_estimate}
\mc{C}_{v} \leq \frac{1}{\mu} \frac{p}{m_{\delta}} \bigg( 2 + \frac{\mathfrak{M}_{\delta} - \mathfrak{m}_{\delta}}{\mathfrak{m}_{\delta}}\bigg) (\mathfrak{M}_{\delta}-\mathfrak{m}_{\delta}) \quad \hbox{ in $ \overline{\Omega} \times \overline{\Omega} \times [0,1]$}
\end{equation}
for some $\delta>0$, where
$$m_{\delta} := \min_{x \in \overline{\Omega_{\delta/2}} } \frac{\psi'(v(x))}{\psi''(v(x))}>0.$$
\end{theorem}

\begin{proof} 
By Step \hyperlink{Step7.1}{\rm{7.1}} we have that the maximum $(x_0, y_0, \lambda_0)$ of $\mc{C}_v$ cannot be attained on the boundary, thus it belongs to $\Omega \times \Omega \times [0,1]$; in particular, there exists $\delta>0$ sufficiently small such that $(x_0,y_0,\lambda_0) \in \Omega_{\delta/2} \times \Omega_{\delta/2} \times [0,1]$. Clearly, $(x_0,y_0,\lambda_0)$ is also the maximum point over $ \overline{\Omega_{\delta/2}} \times \overline{\Omega_{\delta/2}} \times [0,1]$.

Let now $(x_{\eps},y_{\eps},\lambda_{\eps}) \in \overline{\Omega_{\delta/2}} \times \overline{\Omega_{\delta/2}} \times [0,1]$ be the point of maximum of $\mc{C}_{v_{\eps}}$; by Step \hyperlink{Step7.2}{\rm{7.2}}, for $\eps>0$ small, it cannot be on the boundary, thus $(x_{\eps},y_{\eps},\lambda_{\eps}) \in \Omega_{\delta/2} \times \Omega_{\delta/2} \times [0,1]$. 
Moreover, by the convergence in Step \hyperlink{Step7.2}{\rm{7.2}}, we have $(x_{\eps},y_{\eps},\lambda_{\eps}) \to (x_0,y_0,\lambda_0)$; notice that $(x_{\eps}, y_{\eps}, \lambda_{\eps})$, differently from $(x_0,y_0,\lambda_0)$, depends also on $\delta$.
Then, by Theorem \ref{thm_approx_concav_bucsquas} 
$$\nabla v_{\eps}(x_{\eps}) = \nabla v_{\eps} (z_{\eps}) = \nabla v_{\eps} (y_{\eps}) =: \xi_{\eps},$$
where $z_{\eps}:=\lambda_{\eps} x_{\eps} + (1-\lambda_{\eps}) y_{\eps}$. 
By \eqref{eq_cond_strict_monoton} in particular we have 
$$\partial_t B_{\eps}(z_{\eps},t,\xi_{\eps}) \leq - \mu <0 \quad \hbox{for $t \in [v_{\eps}(z_{\eps}), \lambda_{\eps} v_{\eps}(x_{\eps}) + (1-\lambda_{\eps})v_{\eps}(y_{\eps})]$}. $$
Moreover
$$\mc{C}_{v_{\eps}}(x_{\eps},y_{\eps},\lambda_{\eps}) \leq \frac{1}{\mu} \mc{HC}_{B_{\eps}(\cdot, v_{\eps}(\cdot), \xi_{\eps})}(x_{\eps},y_{\eps},\lambda_{\eps}). $$
Being \eqref{eq_cond_strict_monoton} uniform in $\eps$ we can pass to the limit and obtain
$$\mc{C}_{v}(x_0, y_0, \lambda_0) \leq \frac{1}{\mu} \mc{HC}_{B_0(\cdot, v(\cdot), \xi_0)}(x_0, y_0, \lambda_0).$$
We need to estimate $\mc{HC}_{B_0(\cdot, v(\cdot), \xi_0)}(x_0, y_0, \lambda_0)$. 
We observe
$$B_0(x,t,\xi)= p \frac{\psi''(t)}{\psi'(t)} \big( h(x,\psi(t)) + (p-1) |\xi|^p \big) + p \frac{k(x,\psi(t))}{(\psi'(t))^{p-1}}.$$
In light of \eqref{eq_cond_sempl_perturb}%
\footnote{Otherwise we can, respectively, estimate the $\mc{HC}_{B_0(\cdot, v(\cdot), \xi_0)}(x,y,\lambda)$ by exploiting Proposition \ref{prop_diff_harm_concav}, and make some error estimates for $h(x,t)$, or we factorize $h(x,t)$ as $c(x) h_1(x,t)$, where $c$ has small oscillations and $h_1$ has some joint concavity property.} 
we have
$$B_0(x,t,\xi)= \frac{ b_{\xi}(x)}{\rho(t)} $$
where
 $$\rho(t):= \frac{\psi'(t)}{\psi''(t)} \quad b_{\xi}(x) := p \big( h(x) + (p-1) |\xi|^p \big);$$
here $\rho$ is convex and positive for $t>0$, and $b$ is locally bounded and positive.
Namely (we omit the subscripts) we have 
 \begin{align*}
\MoveEqLeft 
\mc{HC}_{B_0(\cdot, v(\cdot), \xi_0)}(x,y,\lambda) \\
\equiv & 
\frac{\frac{b(x)}{\rho(v(x)) }\frac{b(y)}{\rho(v(y))}}{\lambda \frac{b(y)}{ \rho(v(y))} + (1-\lambda)\frac{b(x)}{ \rho(v(x))}} - \frac{b(\lambda x + (1-\lambda)y)}{ \rho(\lambda v(x) + (1-\lambda) v(y))} \\
=& \frac{1}{ \rho(\lambda v(x) + (1-\lambda) v(y))} \Big( b(x) b(y) \frac{ \rho(\lambda v(x) + (1-\lambda) v(y))}{ \lambda b(y) \rho(v(x)) + (1-\lambda) b(x) \rho(v(y))} - b(\lambda x + (1-\lambda)y)\Big) \\
\leq & \frac{1}{ \rho(\lambda v(x) + (1-\lambda) v(y))} \left( \frac{\mathfrak{M}_{\delta,\xi}^2}{\mathfrak{m}_{\delta,\xi}} \frac{ \rho(\lambda v(x) + (1-\lambda) v(y))}{ \lambda \rho(v(x)) + (1-\lambda) \rho(v(y))} - \mathfrak{m}_{\xi,\delta} \right) \\
\leq & \frac{1}{ \rho(\lambda v(x) + (1-\lambda) v(y))} \left( \frac{\mathfrak{M}_{\delta,\xi}^2-\mathfrak{m}_{\delta,\xi}^2}{\mathfrak{m}_{\delta,\xi}}\right)
\end{align*}
where 
$$\mathfrak{m}_{\delta,\xi}:= \min_{x \in \overline{\Omega_{\delta/2}}} b_{\xi}(x) \quad \mathfrak{M}_{\delta,\xi} := \max_{x \in \overline{\Omega_{\delta/2}}} b_{\xi}(x) .$$
By definition of $m_{\delta}>0$ we have (see also Step \hyperlink{Step7}{\rm{7}}) 
$$\mc{HC}_{B_0(\cdot, v(\cdot), \xi_0)}(x_0, y_0, \lambda_0) \leq \frac{1}{m_{\delta}} \bigg( 2 + \frac{\mathfrak{M}_{\delta,\xi} - \mathfrak{m}_{\delta,\xi}}{\mathfrak{m}_{\delta,\xi}}\bigg) (\mathfrak{M}_{\delta,\xi}-\mathfrak{m}_{\delta,\xi});$$
therefore
$$\mc{C}_{v}(x_0, y_0, \lambda_0) \leq \frac{1}{\mu} \frac{p}{m_{\delta}} \bigg( 2 + \frac{\mathfrak{M}_{\delta} - \mathfrak{m}_{\delta}}{\mathfrak{m}_{\delta}}\bigg) (\mathfrak{M}_{\delta}-\mathfrak{m}_{\delta})$$
(notice indeed that $\mathfrak{M}_{\delta,\xi} - \mathfrak{m}_{\delta,\xi} = \mathfrak{M}_{\delta} - \mathfrak{m}_{\delta}$ and that $\mathfrak{m}_{\delta, \xi} \geq \mathfrak{m}_{\delta}$).
This means that \eqref{eq_general_error_estimate} holds.
\end{proof}

\begin{remark}
Consider \eqref{eq_general_error_estimate}. 
If we additionally assume $h \in L^{\infty}(\Omega)$ and $\inf_{\Omega} h >0$, then the maximum and minimum of $h$ on $\Omega_{\delta}$ can be estimated with the ones in $\Omega$, that we call respectively $\mathfrak{M}$ and $\mathfrak{m}$, leading to
$$\mc{C}_{v} \leq \frac{1}{\mu} \frac{p}{m_{\delta}} \bigg( 2 + \frac{\mathfrak{M}- \mathfrak{m}}{\mathfrak{m}}\bigg) (\mathfrak{M}-\mathfrak{m}) \quad \hbox{ in $ \overline{\Omega} \times \overline{\Omega} \times [0,1]$};$$
notice that $\mathfrak{M}-\mathfrak{m} = \textup{osc}(h)$.
Moreover, if $h \in W^{1,\infty}(\Omega)$ then
$$\mc{C}_{v} \leq \frac{1}{\mu} \frac{p}{m_{\delta}} \bigg( 2 + \frac{\diam(\Omega) \norm{\nabla h}_{\infty} }{\mathfrak{m}}\bigg) \diam(\Omega) \norm{\nabla h}_{\infty} \quad \hbox{ in $ \overline{\Omega} \times \overline{\Omega} \times [0,1]$}. $$
\end{remark}

\begin{remark}[Unbounded transformations]
\label{rem_varphi_unbounded}
Consider the case $\varphi$ negatively unbounded in the origin (e.g. the logarithm), i.e.
$$\lim_{t \to 0} \varphi(t)=-\infty.$$ 
Assume $\Omega$ strictly convex and with smooth boundary.
By \cite[Lemma 3.2]{GrPo93} we have that the mid-concavity function $\mc{C}^m_v$ is nonpositive near the boundary (in the sense of \eqref{eq_distante_bordo}), thus it is bounded from above. 
If $\mc{C}^m_v \leq 0$ we are done, otherwise it admits a (positive) maximum point $(x_0,y_0) \in \Omega \times \Omega$. Then we argue as in the general case (essentially with $\lambda_0=\frac{1}{2}$, by adapting Theorem \ref{thm_approx_concav_bucsquas} to $\mc{C}^m_v$) and obtain
$$\mc{C}^m_{v} \leq \frac{1}{\mu} \frac{p}{m_{\delta}} \bigg( 2 + \frac{\mathfrak{M}_{\delta} - \mathfrak{m}_{\delta}}{\mathfrak{m}_{\delta}}\bigg) (\mathfrak{M}_{\delta}-\mathfrak{m}_{\delta}) \quad \hbox{ in $ \overline{\Omega} \times \overline{\Omega}$}.$$
By \cite[Corollary 1]{NgNi93} we can estimate the concavity function in terms of the mid-concavity function up to a factor $2$, that is
$$\mc{C}_{v} \leq \frac{2}{\mu} \frac{p}{m_{\delta}} \bigg( 2 + \frac{\mathfrak{M}_{\delta} - \mathfrak{m}_{\delta}}{\mathfrak{m}_{\delta}}\bigg) (\mathfrak{M}_{\delta}-\mathfrak{m}_{\delta}) \quad \hbox{ in $ \overline{\Omega} \times \overline{\Omega} \times [0,1]$}.$$
To this relation we can apply Hyers-Ulam Theorem \cite{HyUl52} (as in Remark \ref{rem_ulam_thm}).

We observe that this argument cannot be applied to the case of eigenfunctions (and logarithmic transformation): indeed, in this case $\mu=0$ and the above estimate cannot be set. See Section \ref{sec_eigen_phi_unbound} for a different approach. 
On the other hand, different problems could be treated in this way, such as showing that the solutions of suitable perturbations of the eigenfunction equation, e.g.
$$f(x,u) = a(x) u^{p-1} + \sigma u^q,$$
with $q \in [0, p-1)$ and $\sigma$ small, are almost log-concave; see e.g. \cite[Proposition 3.4]{ABCS23}. We leave the details to the interested reader.
Notice that it is natural to consider, in the choice of the transformation, the biggest power, which in this case is $p-1$ (see also Corollary \ref{corol_sum_powers} and \cite{Lio81}).
\end{remark}

\subsubsection{Discussions on the assumptions and corollaries}
\label{sec_disc_assumpt}

To discuss the properties of $B_{\eps}$ in \eqref{eq_def_Beps}, we notice that $ (p-1) |\xi|^2 - \eps $ has a variable sign when $\eps \neq 0$. 
We thus simplify the argument by requiring \eqref{eq_particular_f_choice} together with \eqref{eq_ipotesi_aggiuntiva_G>0}, \eqref{eq_ipotesi aggiuntiva_h>0} and \eqref{eq_cond_on_g}, which imply that $B_{\eps}$ takes the form \eqref{eq_charact_Beps}.

Recalled the domain of reference given in \eqref{eq_domain_refernc_Beps}, for these values of $(x,t,\xi)$ we clearly have
$$h(x, \psi(t)) + H_{\eps}(\xi)^{\frac{p-2}{p}} \left((p-1) |\xi|^2 - \eps\right) >0\quad \hbox{for $\eps$ small}.$$
Considered
\begin{equation}\label{eq_assump_Theta}
\tag{$A_1$}
0< \Theta < \inf_{\Omega_{\delta/2} \times \in [\tfrac{1}{C'},C'] } h(x,\psi(t))
 \end{equation}
$\Theta$ to be fixed (independent on $\eps$), we have also
$$ \Theta + H_{\eps}(\xi)^{\frac{p-2}{p}} \left((p-1) |\xi|^2 - \eps\right) >0 \quad \hbox{for $\eps$ small}.$$
We restrict to such $\eps$, and rewrite $B_{\eps}$ as
 $$B_{\eps}(x,t,\xi)=p \frac{\psi''(t)}{\psi'(t)} \left( \Theta + H_{\eps}(\xi)^{\frac{p-2}{p}} \left((p-1) |\xi|^2 - \eps \right) \right) + p \big(h(x,\psi(t))-\Theta\big) \frac{\psi''(t)}{\psi'(t)} + p \frac{k(x,\psi(t))}{(\psi'(t))^{p-1}}.$$
 
\medskip

$\bullet$ \emph{Monotonicity}. 
If we assume
\begin{equation}\label{eq_ipot_psi_monot}
\tag{$A_2$}
 t \mapsto \frac{\psi''(t)}{\psi'(t)}, \quad t \mapsto \frac{\psi''(t)}{\psi'(t)} \big(h(x,\psi(t))-\Theta \big) \quad \hbox{and} \quad t \mapsto \frac{k(x,\psi(t))}{(\psi'(t))^{p-1}}\quad \hbox{ nonincreasing},
\end{equation}
then \eqref{eq_cond_monoton} holds; 
if one of the three above is strictly decreasing, then
\begin{equation}\label{eq_cond_monoton_strict}
t \mapsto B_{\eps}(x,t,\xi) \quad \hbox{strictly decreasing}. 
\end{equation}

If moreover
\begin{equation}\label{eq_ipot_psi_mustrett}
\tag{$A_3$}
\partial_t \left (\frac{\psi''}{\psi'}\right)(t) \leq - \tilde{\mu}<0, 
\end{equation}
for some $\tilde{\mu}>0$, then
there exists $\mu=\mu(\delta)>0$ such that \eqref{eq_cond_strict_monoton} holds.

\medskip

$\bullet$ \emph{Positivity.} If we assume
\begin{equation}\label{eq_ipot_psi_positiv}
\tag{$A_4$}
\frac{\psi''(t)}{\psi'(t)} \geq 0 , \quad \frac{k(x,\psi(t))}{(\psi'(t))^{p-1}}\geq 0, \quad \frac{\psi''(t)}{\psi'(t)} +\frac{k(x,\psi(t))}{(\psi'(t))^{p-1}}> 0
\end{equation}
then \eqref{eq_cond_posit} holds.

\medskip

$\bullet$ \emph{Concavity.} 
The harmonic concavity request in \eqref{eq_cond_concav} is, in general, difficult to check, due to the sum of different terms (but in some particular cases it can be checked only through the harmonic concavity of $ t \mapsto \frac{\psi''(t)}{\psi'(t)} $, see Case 2 of the proof Theorem \ref{thm_main_conc_exact}, Remark \ref{rem_BMS_assumptions} below and Remark \ref{rem_comp_singular} for some comments). 
To discuss the harmonic concavity of the sum, we rely on Proposition \ref{prop_propriet_concav_varie}. 
Indeed, if we assume
\begin{equation}\label{eq_ipot_psi_concav}
\tag{$A_5$}
 t \mapsto t^2 \frac{\psi''(t)}{\psi'(t)}, \quad (x,t) \mapsto t^2 \frac{\psi''(t)}{\psi'(t)} \big(h(x,\psi(t))-\Theta \big) \quad \hbox{and} \quad (x,t) \mapsto t^2 \frac{k(x,\psi(t))}{(\psi'(t))^{p-1}} \quad \hbox{jointly concave},
\end{equation}
then
\begin{equation}\label{eq_strong_cond_jointconcav}
(x,t) \mapsto t^2 B_{\eps}(x,t,\xi) \quad \hbox{jointly concave}.
\end{equation}
If we assume $B_{\eps}$ positive (see \eqref{eq_ipot_psi_positiv}), by Proposition \ref{prop_propriet_concav_varie} we see that \eqref{eq_strong_cond_jointconcav} is stronger than \eqref{eq_cond_concav}.

\smallskip

As a consequence of the above discussion, we obtain the following corollaries of Theorems \ref{thm_gen_exact_concav} and \ref{thm_gen_perturb_concav}.

\begin{corollary}[Exact concavity] 
\label{corol_gen_exact_concav}
Let $\Omega \subset \R^N$, $N\geq 2$, be open, bounded, convex, with $\partial \Omega \in C^{1,\alpha}$,
and let $p\in (1,+\infty)$.
Let $u$ be a solution of \eqref{eq_general_fxu}, and let $f$ and $\varphi$ satisfy \hyperlink{(f1)}{\rm{(f1)}}--\hyperlink{(f3)}{\rm{(f3)}}, \eqref{eq_def_phi_trans}-\eqref{eq_phi_trans_techn},
\eqref{eq_particular_f_choice}, \eqref{eq_ipotesi_aggiuntiva_G>0}, \eqref{eq_ipotesi aggiuntiva_h>0} \eqref{eq_cond_on_g}, one among \hyperlink{(fs)}{\rm{(fs)}} or $f(x,t) = a(x) |t|^{p-2} t$ and \eqref{eq_ipot_aggiunt_phi_eigenf}.
Assume moreover that \eqref{eq_assump_Theta} and \eqref{eq_ipot_psi_monot}, together with \eqref{eq_cond_concav} (implied e.g. by \eqref{eq_ipot_psi_positiv}-\eqref{eq_ipot_psi_concav}). 
Then $\varphi(u)$ is concave.
\end{corollary}

\begin{corollary}[Perturbed concavity]
\label{corol_gen_perturb_concav}
Let $\Omega \subset \R^N$, $N\geq 2$, be bounded, strongly convex, with $\partial \Omega \in C^{2,\alpha}$, and let $p\in (1,+\infty)$.
Let $u$ be a solution of \eqref{eq_general_fxu}, and let $f$ and $\varphi$ satisfy 
\hyperlink{(f1)}{\rm{(f1)}}--\hyperlink{(f3)}{\rm{(f3)}},
\eqref{eq_def_phi_trans}, \eqref{eq_phi_trans_techn}, 
\eqref{eq_particular_f_choice}, \eqref{eq_ipotesi aggiuntiva_h>0}, \eqref{eq_cond_on_g}, \hyperlink{(fs)}{\rm{(fs)}}, together with \eqref{eq_cond_phi_boun}, \eqref{eq_cond_sempl_perturb} and \eqref{eq_cond_harm_concave}. 
Assume moreover that \eqref{eq_assump_Theta}, \eqref{eq_ipot_psi_monot} and \eqref{eq_ipot_psi_mustrett} hold. 
Then $v=\varphi(u)$ satisfies 
$$\mc{C}_{v} \leq \frac{1}{\mu} \frac{p}{m_{\delta}} \bigg( 2 + \frac{\mathfrak{M}_{\delta} - \mathfrak{m}_{\delta}}{\mathfrak{m}_{\delta}}\bigg) (\mathfrak{M}_{\delta}-\mathfrak{m}_{\delta}) \quad \hbox{ in $ \overline{\Omega} \times \overline{\Omega} \times [0,1]$}$$
for some $\delta>0$, where
$m_{\delta} = \min_{x \in \overline{\Omega_{\delta/2}} } \frac{\psi'(v(x))}{\psi''(v(x))}>0$ and $\mathfrak{m}_{\delta}, \mathfrak{M}_{\delta}$ as in \eqref{eq_def_mfrak}.
\end{corollary}

We observe that conditions \eqref{eq_ipot_psi_monot}, \eqref{eq_ipot_psi_mustrett}, \eqref{eq_ipot_psi_positiv} and \eqref{eq_ipot_psi_concav} depend on $f$ in an implicit way (with respect to $g$); stating general explicit assumptions on $f$ which imply these conditions might be a bit cumbersome; in turn, we prefer to present various examples, including the one in Theorem \ref{thm_main_conc_exact}, the ones in Remark \ref{rem_BMS_assumptions} below, and the one in Corollary \ref{corol_sum_powers}.

\begin{remark}
\label{rem_BMS_assumptions}
We observe that, if $h \equiv 1$ and $k\equiv 0$, that is
$$f(x,t)\equiv g(t),$$
(we avoid the presence of an $x$-dependent weight to keep the presentation simple) then \eqref{eq_ipot_psi_monot}, \eqref{eq_ipot_psi_positiv} and \eqref{eq_cond_concav} collapse to
$$ t \mapsto \frac{\psi''(t)}{\psi'(t)} \quad \hbox{nonincreasing and harmonic concave},$$
thus monotonicity \eqref{eq_cond_monoton} and harmonic concavity \eqref{eq_cond_concav} of $B_{\eps}$ are ensured by this request.
A sufficient condition to fulfill this requirement can be expressed in terms of $g$ (recall indeed the relation \eqref{eq_cond_on_g}), and it is given in \cite{BMS22} (see also \cite[Theorem 4.4]{CaFr85}, \cite[Corollary 2]{Kaw85W}) which reads as follows:
\begin{itemize}
\item[(i)] $G^{1/p}$ concave,
\item[(ii)] $\frac{g}{G}$ harmonic concave. 
\end{itemize}
See \cite[Section 4.5]{BMS22} for details. This ensures that $\varphi(t) = \int_1^t (G(s))^{-\frac{1}{p}} ds$ is concave. 
Class of functions covered by this choice (but not by the assumptions in \cite{Ken85}) are, for example, $g(t)=\log(1+t)$ (allowing non-power nonlinearities), $g(t)=1-t$ (allowing linear perturbation of the Laplacian); see also Section \ref{sec_singular} for another interesting application to singular equations.
We will see that the possibility of $k(x,t)$ in \eqref{eq_particular_f_choice} of not being zero allows more general cases than \cite{BMS22}, even in the autonomous semilinear framework (see Corollary \ref{corol_sum_powers}).
\end{remark}

\section{Applications}
\label{sec_applic}

As we saw in Corollaries \ref{corol_gen_exact_concav} and \ref{corol_gen_perturb_concav}, the machinery of the previous Sections allows to treat the case of a general $g$ (in the spirit of \cite{BMS22, MRS24}, see Remark \ref{rem_BMS_assumptions} and Corollary \ref{corol_sum_powers} below); we focus here our attention on the power case, that is
$$f(x,t) = a(x) t^{q}$$
with $q \in (0,p-1]$, and $a(x) > 0$, which clearly satisfies \hyperlink{(f1)}{\rm{(f1)}}--\hyperlink{(f3)}{\rm{(f3)}} and one among \hyperlink{(fs)}{\rm{(fs)}} and $f(x,t) = a(x) t^{p-1}$. 

We are now ready to prove the main theorems.

\begin{proof}[Proof of Theorem \ref{thm_main_conc_exact}] 
We write ($k\equiv 0$)
\begin{equation}\label{eq_singular_decompos}
f(x,t) = \big(a(x) t^{-q_1}\big) t^{q_2} \equiv h(x,t) g(t) 
\end{equation}
with $ q= q_2 -q_1 $, $q_1 \in [0, p-1-q]$ and $q_2 \in [q,p-1]$. 
Thus we have \eqref{eq_particular_f_choice}--\eqref{eq_cond_on_g}.

\textbf{Case 1:} $q_2 \neq p-1$. By \eqref{eq_cond_on_g} $\varphi$ can be chosen as
\begin{equation}\label{eq_camb_var_pot}
\varphi(t)= \zeta t^{\gamma}
\end{equation}
with $\gamma := \frac{p-1-q_2}{p} \in (0,\frac{p-1}{p}) \subset (0,1)$ and $\zeta := \zeta(p,q_2)= \left(\frac{q_2+1}{p}\right)^{1/p} \frac{p}{p-1-q_2}$; for exact concavity the constant $\zeta>0$ plays no role, while for the perturbed concavity it is involved in the coefficients of the estimate. 
By \eqref{eq_camb_var_pot} we have $\psi(t)= \frac{1}{\zeta^{1/\gamma}} t^{1/\gamma}$ and
$$\frac{\psi''(t)}{\psi'(t)} = \frac{1-\gamma}{\gamma} \frac{1}{t}$$
which is positive, decreasing, harmonic concave (thus \eqref{eq_cond_harm_concave} holds), and such that $t^2\frac{\psi''(t)}{\psi'(t)} = \frac{1-\gamma}{\gamma} t$ is concave. 
Moreover
$$ \frac{\psi''(t)}{\psi'(t)} \big(h(x,\psi(t))-\Theta \big) = \frac{1-\gamma}{\gamma} \frac{1}{t} \bigg(\frac{1}{\zeta^{-q_1/\gamma}} a(x) t^{-q_1/\gamma} - \Theta \bigg)$$
where we recall (see \eqref{eq_assump_Theta}) that $\Theta$ is chosen in such a way $\frac{1}{\zeta^{-q_1/\gamma}} a(x) t^{-q_1/\gamma} - \Theta$ is far from zero. 
In particular the above quantity is always positive. 
With no loss of generality we discuss monotonicity and concavity of
$$\kappa(x,t):= \frac{1}{t} \big( a(x) t^{-q_1/\gamma} -\tilde{\Theta} \big)$$
where $\tilde{\Theta} := \zeta^{-q_1/\gamma} \Theta$.

\medskip

\emph{Monotonicity:} to meet the condition \eqref{eq_ipot_psi_monot} for \eqref{eq_cond_monoton} we compute
$$\partial_t \kappa(x,t)= \frac{1}{t^2} \Big(- \Big(\frac{q_1}{\gamma}+1\Big) a(x) t^{-q_1/\gamma-1} + \tilde{\Theta}\Big);$$
by assuming $q_1 < \gamma$ and $\tilde{\Theta}$ sufficiently small (depending also on $q_1, q_2, \gamma$), we have the claim (both \eqref{eq_cond_monoton} and \eqref{eq_cond_strict_monoton}). 
Namely, choosing $\tilde{\Theta} < \frac{1}{2} \big(\frac{q_1}{\gamma}+1\big) (\inf_{\Omega_{\delta/2}} a) \norm{v}_{\infty}^{-q_1/\gamma-1}$ (and consequently $\eps$ small), we have
$$\partial_t \kappa(x,t) \leq -\frac{1}{2} \Big(\frac{q_1}{\gamma}+1\Big) \Big(\inf_{\Omega_{\delta/2}} a\Big) \norm{v}_{\infty}^{-q_1/\gamma-1}$$
and thus
$$\partial_t B_{\eps}(x,t,\xi) \leq - \frac{p}{2} \frac{1-\gamma}{\gamma} \frac{1}{\zeta} \Big(\frac{q_1}{\gamma}+1\Big) \Big(\inf_{\Omega_{\delta/2}} a\Big) \norm{u}_{\infty}^{-q_1-\gamma}=: - \mu .$$
Hence we actually obtain \eqref{eq_ipot_psi_mustrett}, which implies the stronger \eqref{eq_cond_strict_monoton}.

\medskip

\emph{Concavity:} to meet the condition \eqref{eq_ipot_psi_concav} for \eqref{eq_cond_concav} we discuss the joint concavity of
$$(x,t) \mapsto t^2 \kappa(t) = a(x) t^{1-q_1/\gamma} - \tilde{\Theta} t ;$$
clearly it is sufficient to discuss the joint concavity of 
$$(x,t) \mapsto a(x) t^{1-q_1/\gamma}.$$
We restrict to $q_1 \in [0, \gamma]$ and exploit that $x\mapsto a(x)$ is $\theta$-concave, for some $\theta \in [1,+\infty]$.

Consider first $q_1 \in (0, \gamma)$: we observe that $t \mapsto t^{1-q_1/\gamma}$ is $\omega = \frac{\gamma}{\gamma-q_1} \in (0, +\infty)$ concave. 
By Proposition \ref{prop_propriet_concav_varie} we have $(x,t) \mapsto a(x) t^{1-q_1/\gamma}$ is jointly $(\frac{1}{\theta} + \frac{1}{\omega})^{-1}$-concave.
By imposing $(\frac{1}{\theta} + \frac{1}{\omega})^{-1}=1$ we get $\gamma = \frac{\theta(p-1-q)}{1+ \theta p}$ and $q_1 = \frac{p-1-q}{1+\theta p}$, $ q_2 = \frac{p-1+ \theta p q}{1+\theta p}$; in particular, by the restriction $q_1 < \gamma$, we have $\theta>1$.
If now $q_1=\gamma$, then $\omega=\infty$ and the product function is concave if $\theta =1$; in this case $\gamma = q_1= \frac{p-1-q}{1+ p} ,$ $q_2 = \frac{p-1+pq}{1+p}$.
If instead $q_1=0$, then $\omega=1$ and the product function is concave if $\theta=\infty$ (i.e. $a$ is constant); in this case $\gamma = \frac{p-1-q}{p}$, $q_2=q$.
In all the above relations we have that $\gamma>0$ forces $p < q-1$.
Under these assumptions we have \eqref{eq_cond_concav}.

\smallskip

\textbf{Case 2:} $q_2 = p-1$, (possibly $p<q-1$), i.e. $q_1 = p-1-q$. Then $\varphi(t) = \log(t)$ (up to constants), and $\psi(t) = e^t$, thus $\frac{\psi''(t)}{\psi'(t)} = 1$ which is positive and nonincreasing, but $t^2 \frac{\psi''(t)}{\psi'(t)} = t^2$ is not concave.
Moreover
$$\frac{\psi''(t)}{\psi'(t)} \big(h(x,\psi(t))-\Theta \big) = a(x) e^{-q_1 t} - \Theta.$$
The function is nonincreasing (thus we have \eqref{eq_cond_monoton}), but never harmonic concave 
if $q_1 \neq 0$. 
Thus we assume $q_1=0$ (that is, $q_2=q=p-1$) and hence $\frac{\psi''(t)}{\psi'(t)} \big(h(x,\psi(t))-\Theta \big) = a(x) - \Theta$ which is (jointly, harmonic) concave if $a(x)$ is so.
Again, under these assumptions we have \eqref{eq_cond_concav}.

\smallskip

We conclude by applying Corollary \ref{corol_gen_exact_concav}. 
\end{proof}

We see that the statements in Example \ref{exam_hardy_henon} follow by Theorem \ref{thm_main_conc_exact} and straightforward computations. Focusing on one of these results, we propose now another application of Corollary \ref{corol_confron_3}, where $a_2$ is nonconstant; similar statements hold for the other examples.

\begin{corollary}[Hardy-Hénon type equation]
\label{corol_hard_hen_perturb}
Let $\Omega$ open, bounded and convex be a subset of the hyperoctant $\{ x\in \R^N \mid x_i > 0 \; \hbox{for each $i$}\}$, $N\geq 2$, and let $p\in (1,+\infty)$.
Consider $\omega \in [0,1]$ and the positive solutions $u$ and $v$ of 
$$\begin{cases}
-\Delta_p u = |x|_2^{\omega} & \quad \hbox{ in $\Omega$},\\ 
u =0 & \quad \hbox{ on $\partial \Omega$},
\end{cases}
\qquad
\begin{cases}
-\Delta_p v = |x|_1^{\omega} & \quad \hbox{ in $\Omega$},\\ 
v =0 & \quad \hbox{ on $\partial \Omega$}.
\end{cases}$$
Then $v^{\frac{p-1}{\omega+ p}}$ is concave (strictly, if $p=2$) and
$$\norm{u^{\frac{p-1}{\omega+ p}}-v^{\frac{p-1}{\omega+ p}}}_{\infty} \leq C \left((N^{\omega}-1) \norm{\max\{|x|_2^{\omega}, |x|_1^{\omega} \}}_{L^{\infty}(\Omega)}\right)
^{\kappa_{q}\frac{1}{\omega+ p}}$$
if $p\geq 2$, $C=C(p, \Omega, \norm{|x|_1^{\omega}}_{\infty},\norm{|x|_2^{\omega}}_{\infty})>0$, $q=p^*$ if $2 \leq p<N$ and $q<\infty$ if $p\geq N \geq 2$, while
$$\norm{u^{\frac{p-1}{\omega+ p}}-v^{\frac{p-1}{\omega+ p}}}_{\infty} \leq C \left((N^{\omega}-1) \norm{\max\{|x|_2^{\omega}, |x|_1^{\omega} \}}_{L^{\infty}(\Omega)}\right)
^{\kappa_{q}
\frac{p-1}{\omega+ p}}$$
if $p\leq 2$ and $\partial \Omega \in C^{1,\alpha}$, $C=C(p, \Omega, \alpha, \norm{|x|_1^{\omega}}_{\infty}, \norm{|x|_2^{\omega}}_{\infty})>0$, $q=2^*$ if $N\geq 3$ and $q <\infty$ if $N= 2$.
Here $|x|_1$ is the $1$-norm in $\R^N$ and $|x|=|x|_2$ is the Euclidean norm.
In particular the difference goes to zero as $\omega \to 0$. 
\end{corollary}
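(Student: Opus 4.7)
The plan is to package Corollary \ref{corol_hard_hen_perturb} as a direct application of two machines already established in the paper: Theorem \ref{thm_main_conc_exact} for the concavity of $v^{\frac{p-1}{\omega+p}}$, and Corollary \ref{corol_confron_3} for the quantitative comparison between $u$ and $v$.

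For the concavity statement, I would first observe that on the positive orthant $\{x_i>0 \text{ for all } i\}$ the map $x\mapsto |x|_1=\sum_i x_i$ is positive and linear, hence concave. Consequently $|x|_1^{\omega}$ is $\theta$-concave with $\theta=1/\omega\geq 1$, since $(|x|_1^{\omega})^{1/\theta}=|x|_1$ is concave. The regularity hypothesis $a\in C^{1,\alpha}_{loc}(\Omega)$ holds trivially because $\overline{\Omega}$ is contained in the open positive orthant where $|x|_1^{\omega}$ is smooth. Applying Theorem \ref{thm_main_conc_exact} with $q=0$ and this $\theta$ yields concavity of $v^{\gamma}$ with $\gamma=\frac{\theta(p-1)}{1+\theta p}=\frac{p-1}{\omega+p}$, as claimed. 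For strictness when $p=2$, I would invoke the constant rank theorem applied to the semilinear transformed equation in the spirit of Caffarelli--Friedman (see the discussion following Theorem \ref{thm_intr_recap}); this step is the main obstacle, since $|x|_1^{\omega}$ is \emph{not} strictly $\frac{1}{\omega}$-concave (its $\omega$-th root is affine), so strictness must come from a rank-constancy plus boundary-behavior argument rather than from strict concavity of the weight itself.

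For the quantitative estimate, I would apply Corollary \ref{corol_confron_3} with $a_1(x)=|x|_2^{\omega}$ and $a_2(x)=|x|_1^{\omega}$ (the latter being the $\theta$-concave one with $\theta=1/\omega$), which produces a bound of the form $\|u^{\gamma}-v^{\gamma}\|_\infty\leq C\|a_1-a_2\|_\infty^{\kappa\gamma}$ with $\gamma=\frac{p-1}{\omega+p}$ and $\kappa=\kappa_{p^*}$ or $\kappa_{2^*}$ depending on whether $p\geq 2$ or $p\leq 2$. The essential input is then the estimate on $\|a_1-a_2\|_\infty$. On the positive orthant the elementary norm comparison $|x|_2\leq |x|_1\leq \sqrt{N}\,|x|_2$ (Cauchy--Schwarz applied to $(x_i)_i$ against $(1)_i$) gives, for $\omega\in[0,1]$,
\[
0\leq |x|_1^{\omega}-|x|_2^{\omega}\leq (N^{\omega/2}-1)|x|_2^{\omega}\leq (N^{\omega}-1)\max\{|x|_1^{\omega},|x|_2^{\omega}\},
\]
where the last inequality uses $N^{\omega/2}-1\leq N^{\omega}-1$ for $N\geq 1$ and $\omega\in[0,1]$. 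Taking $L^\infty$-norms and inserting into Corollary \ref{corol_confron_3} yields the two displayed estimates.

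Finally, the convergence of the right-hand side to $0$ as $\omega\to 0$ is immediate since $N^{\omega}-1\to 0$ while the constants $C$ remain uniformly bounded (as the $C^{0,\alpha}$-norms of $|x|_i^{\omega}$ stay controlled on $\overline{\Omega}$, which is compactly contained in the open positive orthant). The bulk of the argument is therefore bookkeeping once the norm comparison on the positive orthant is in hand; the only genuinely delicate point is promoting concavity to strict concavity in the semilinear case $p=2$.
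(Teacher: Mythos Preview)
Your proposal is correct and follows essentially the same route as the paper, which invokes Example \ref{exam_hardy_henon} (i.e.\ Theorem \ref{thm_main_conc_exact}) for the concavity and Corollary \ref{corol_confron_3} together with the norm comparison $|x|_1\leq N|x|_2$ for the quantitative bound. The paper does not justify the strictness claim for $p=2$ in its proof at all, so your discussion of the constant rank argument is actually more careful than the original.
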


\begin{proof}
By the equivalence of the norms in $\R^N$ we know that $|x|_1 \leq N |x|_2 \leq N^{3/2} |x|_1$, thus
$$\norm{|x|_2^{\omega} - |x|_1^{\omega}}_{L^{\infty}(\Omega)} \leq (N^{\omega}-1) \norm{\max\{|x|_2^{\omega}, |x|_1^{\omega} \}}_{L^{\infty}(\Omega)}.$$
The claim follows from Example \ref{exam_hardy_henon} and Corollary \ref{corol_confron_3}.
\end{proof}

\begin{remark}
We notice that, for each $\omega \in (0,1]$, highlighting the dependence on $\omega$, we have that $v_{\omega}^{\frac{p-1}{\omega+p}}$ is concave, and moreover $\norm{v_{\omega}^{\frac{p-1}{\omega+p}}}_{\infty} \leq C$. 
Thus by \cite[Theorem 10.9]{Roc97}, up to a subsequence, we have $v_{\omega}^{\frac{p-1}{\omega+p}} \to \bar{v}$ as $\omega \to 0$ in $L^{\infty}_{loc}(\Omega)$, for some $\bar{v}$ concave: by Corollary \ref{corol_hard_hen_perturb} we thus obtain that, up to a subsequence, $u_{\omega}^{\frac{p-1}{\omega+ p}}$ converge to a concave function as $\omega \to 0$.
\end{remark}

\begin{proof}[Proof of Theorem \ref{thm_approx_concav_harmonic}]
The conclusion follows from the same arguments of proof of Theorem \ref{thm_main_conc_exact} and applying Corollary \ref{corol_gen_perturb_concav}. 
In particular, by choosing $q_1=0$ we obtain ($v=u^{\gamma}$)
$$\mc{C}_{u^{\gamma}}(x_0, y_0, \lambda_0) \leq p \frac{1}{\mu_{a,u}} \frac{1}{m_{\delta,u}} \left( 2 + \frac{\osc(a)}{\mathfrak{m}_{\delta}}\right) \osc(a)$$
where
$$m_{\delta, u} := \frac{\gamma}{1-\gamma} \min_{\overline{\Omega_{\delta/2}} } u(x),
\qquad \mathfrak{m}_{\delta}:= \min_{x \in \overline{\Omega_{\delta/2}}}a(x), 
\quad \mu_{a,u,\delta}:= \frac{p}{2} \frac{1-\gamma}{\gamma} \frac{1}{\zeta} \mathfrak{m}_{\delta} \norm{u}_{\infty}^{-\gamma},$$
with $\gamma = \frac{p-1-q}{p}$, $\zeta = \left(\frac{q+1}{p}\right)^{1/p} \frac{p}{p-1-q}$.
That is
$$\mc{C}_{u^{\gamma}}(x_0, y_0, \lambda_0) \leq \zeta \bigg( \frac{\norm{u}_{\infty}}{\min_{\overline{\Omega_{\delta/2}} } u(x)} \bigg)^{\gamma} \bigg( 2 + \frac{\osc(a)}{\mathfrak{m}_{\delta}}\bigg) \frac{\osc(a)}{\mathfrak{m}_{\delta}}.
\vspace{-2em}
$$
\end{proof}

\begin{proof}[Proof of Corollary \ref{cor_conv_oscul}]
First we observe that
$$\norm{a_n - a_{\infty}}_{\infty} = O(\osc(a_n)).$$
Moreover we have, by the equiboundedness of $\norm{a_n}_{\infty}$ and regularity theory \cite[Theorem 2.1]{BPS22}, $\norm{u_n}_{\infty} \leq C$ and, by Corollary \ref{corol_stima_basso}, $\min_{\overline{\Omega_{\delta/2}}} u_n \geq C > 0$.
The claim thus follows by Theorem \ref{thm_approx_concav_harmonic}.
\end{proof}

As a final result in this Section, we show an application in the case $k\nequiv 0$ (see \eqref{eq_particular_f_choice}), which allows to include some cases covered by \cite{Ken85}, but not by \cite{BMS22} (see Remark \ref{rem_BMS_assumptions}), of the type $f(x,t) \equiv g(t) + k(t)$ (we avoid the presence of $x$-dependent weights for the sake of exposition).
For simplicity we consider the sum of powers.

\begin{corollary}
\label{corol_sum_powers}
Let $\Omega \subset \R^N$, $N\geq 2$, be open, bounded, convex, and let $p\in (1,+\infty)$.
Let $q,r \in (0, p-1)$ and $u$ be a positive solution of 
$$\begin{cases}
-\Delta_p u = u^q + u^r & \quad \hbox{ in $\Omega$},\\ 
u =0 & \quad \hbox{ on $\partial \Omega$},
\end{cases}$$
with $r \leq q$ and 
\begin{equation}\label{eq_cond_sum_power}
(p+1)q - p r \leq p-1.
\end{equation}
Then $u^{\frac{p-1-q}{p}}$ is concave.
\end{corollary} 

\begin{proof}
We clearly have \hyperlink{(f1)}{\rm{(f1)}}--\hyperlink{(f3)}{\rm{(f3)}} and \hyperlink{(fs)}{\rm{(fs)}}.
Set $g(t)=t^q$ and $k(t)=t^r$, we have \eqref{eq_particular_f_choice}--\eqref{eq_cond_on_g} and (up to constants) $\varphi(t) = t^{\frac{p-q-1}{p}}$. 
We see that $t \mapsto \frac{k(\psi(t))}{(\psi'(t))^{p-1}} = t^{\frac{pr-(p-1)q-(p-1)}{p-q-1}}$ is nonincreasing and $t \mapsto t^2 \frac{k(\psi(t))}{(\psi'(t))^{p-1}}=t^{\frac{pr-(p+1)q+(p-1)}{p-q-1}}$ is concave by the assumptions; thus we conclude.
\end{proof}

\begin{remark}
Notice that for $p=2$, \eqref{eq_cond_sum_power} gives $3q - 2r \leq 1$, which actually is the condition to impose in order to have
\begin{equation}\label{eq_cond_extra_kenn}
 t \mapsto t^{\gamma} f(t) \quad \hbox{strictly decreasing}, \quad t \mapsto t^{\frac{3\gamma-1}{\gamma}} f(t^{1/\gamma}) \quad \hbox{concave},
 \end{equation}
$\gamma=\frac{1-q}{2}$, requested in \cite[Theorem 3.3]{Ken85}.
Additionally, we highlight that, in Corollary \ref{corol_sum_powers}, it is the biggest exponent who leads the choice of the transformation function. 
Moreover, it remains open to determinate if the restrictions on $q,r$ in Corollary \ref{corol_sum_powers} are sharp.
\end{remark}

\subsection{Weighted eigenfunctions}
\label{sec_eigen_phi_unbound}

We consider 
$$f(x,t) = a(x) |t|^{p-2} t.$$
The approximation argument has been commented in Remark \ref{rem_weighted_eigenfunctions}, while exact concavity has been considered in Case 2 of the proof of Theorem \ref{thm_main_conc_exact}. 
We give now some insights on perturbed concavity.

When $p=2$ and $\varphi(t)=\log(t)$ the equation solved by $v=\varphi(u)$ is given by
$$-\Delta v = |\nabla v|^2 + 1;$$
thus even in the semilinear case we see that the main issue is given by the fact that the nonlinearity of the transformed equation does not have a derivative in $t$ far from zero.
This problem has been tackled in different ways: in \cite[Proposition 3.4]{ABCS23} the authors study a perturbed equation, with an additional term which allows to gain the desired assumption on the nonlinearity (see Remark \ref{rem_varphi_unbounded}).
In \cite[Proposition 2.8]{BuSq20} they consider instead an approximation process, by staying far from the boundary; this approach would require cumbersome computations in order to be implemented in the nonregular quasilinear setting, since a double approximation process should be set up.
Another possibility is to pass to the parabolic equation, see \cite{GMrS25}.

\medskip

Here we propose a different approach, by estimating the concavity function of the transformation 
$$\varphi_{\sigma}(t) :=\log(u-\sigma)$$
in the set where it is well defined, i.e. $\{u>\sigma\}$; notice that, since we cannot ensure the quasiconcavity of $u$, this set is generally not a priori convex. 
Near the boundary we have instead an information on $\log(u)$.
For the sake of simplicity we present the argument for $p=2$, but it can be easily adapted to any $p \in (1,+\infty)$.

\begin{theorem}
\label{thm_weigh_eigen_sigma}
Let $\Omega \subset \R^N$, $N\geq 2$, be open, bounded, strictly convex, with $\partial \Omega \in C^{2,\alpha}$. 
Then there exists $\sigma_0>0$ such that, for each $\sigma \in (0,\sigma_0]$, set 
$$\Lambda_{\sigma} :=\{u> \sigma\}$$
we have%
\footnote{Notice that, arguing as in Remark \ref{rem_varphi_unbounded}, we have also 
$$\mc{C}_{\log(u)}(x,y,\lambda) \leq 2\max_{\Lambda_{\sigma}\times \Lambda_{\sigma}} \mc{C}_{\log(u)} \quad \hbox{for $(x, y,\lambda) \in \big(\Omega \times \Omega) \setminus\big(\Lambda_{\sigma} \times \Lambda_{\sigma}\big) \times [0,1].
$}$$}
$$\mc{C}_{\log(u)}^m(x,y) \leq \max_{\Lambda_{\sigma}\times \Lambda_{\sigma}} \mc{C}^m_{\log(u)} \quad \hbox{for $(x, y) \in \big(\Omega \times \Omega) \setminus\big(\Lambda_{\sigma} \times \Lambda_{\sigma}\big)$}$$
and
$$\mc{C}_{\log(u-\sigma)}(x,y,\lambda) (x,y) \leq \frac{1}{\sigma} 
C \, \osc(a) $$
for any $(x,y) \in \Lambda_{\sigma}\times \Lambda_{\sigma}$ with $[x,y] \subset \Lambda_{\sigma}$ and $\lambda \in [0,1]$, where 
$C=C(a,u,\delta) := e^{\norm{u}_{\infty}} \bigg( 2 + \frac{\osc(a)}{\mathfrak{m}_{\delta}}\bigg) \frac{1}{\mathfrak{m}_{\delta}}$.
\end{theorem}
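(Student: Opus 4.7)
The plan is to prove the two assertions separately, since they address different parts of the domain. The first assertion exploits the boundary blow-down of $\log u$ in order to localize the maximum of $\mc{C}^m_{\log u}$; the second uses the $\sigma$-shift to restore the strict $t$-monotonicity of the transformed right-hand side, which is exactly the obstruction that prevents a direct application of Theorem \ref{thm_approx_concav_bucsquas} to the eigenfunction.

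For the first claim, I would observe that $\mc{C}^m_{\log u}$ is continuous on $\Omega\times\Omega$ and, since $\log u \to -\infty$ on $\partial\Omega$, the Grossi--Porru boundary lemma \cite[Lemma 3.2]{GrPo93} gives $\limsup \mc{C}^m_{\log u}(x_n,y_n)\le 0$ as $(x_n,y_n)\to \partial(\Omega\times\Omega)$. If $\mc{C}^m_{\log u}\le 0$ identically the inequality is trivial; otherwise the positive supremum is attained on a compact subset of $\Omega\times\Omega$, and taking $\sigma_0$ strictly smaller than $\min u$ on that compact set forces every global maximizer into $\Lambda_\sigma\times\Lambda_\sigma$ for every $\sigma\le\sigma_0$. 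In particular the maximum over the whole $\Omega\times\Omega$ coincides with the maximum over $\Lambda_\sigma\times\Lambda_\sigma$, giving the first inequality on the complementary set.

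For the second claim, I would set $v:=\log(u-\sigma)$ on $\Lambda_\sigma$; substituting $u=e^v+\sigma$ in $-\Delta u=a(x)u$ produces
\begin{equation*}
-\Delta v \;=\; |\nabla v|^2 + a(x) + \sigma\, a(x)\, e^{-v} \;=:\; B(x,v,\nabla v) \quad\text{on } \Lambda_\sigma.
\end{equation*}
The shift generates the extra $\sigma$-term, whose derivative $\partial_t B=-\sigma\, a(x)\, e^{-t}$ is strictly negative (in contrast with the unshifted case, where it vanishes and blocks Theorem \ref{thm_approx_concav_bucsquas}); since $v\le\log\norm{u}_\infty\le\norm{u}_\infty$, one gets the uniform bound $\partial_t B\le -\sigma\,\mathfrak{m}_\delta\, e^{-\norm{u}_\infty}=:-\mu<0$. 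At any interior maximum $(\bar x,\bar y,\bar\lambda)$ of $\mc{C}_v$ over triples with $[\bar x,\bar y]\subset\Lambda_\sigma$, the standard $p=2$ elliptic regularity gives $v\in C^2$ locally, and Theorem \ref{thm_approx_concav_bucsquas} applies to yield
\begin{equation*}
\mc{C}_v(\bar x,\bar y,\bar\lambda) \;\le\; \tfrac{1}{\mu}\,\mc{HC}_{B(\cdot,v(\cdot),\bar\xi)}(\bar x,\bar y,\bar\lambda).
\end{equation*}
Splitting $B$ into the $(x,t)$-constant piece $|\bar\xi|^2$, the term $a(x)$, and the uniformly small term $\sigma a(x)e^{-v}$, and exploiting Proposition \ref{prop_diff_harm_concav} together with the harmonic-concavity-defect computation of \eqref{eq_general_error_estimate} with $h=a$, bounds the right-hand side by a constant multiple of $\osc(a)(2+\osc(a)/\mathfrak{m}_\delta)/\mathfrak{m}_\delta$. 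Combined with $1/\mu = e^{\norm{u}_\infty}/(\sigma\,\mathfrak{m}_\delta)$, this produces the stated $C_{a,u,\delta}\osc(a)/\sigma$.

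The main technical obstacle is that $\Lambda_\sigma$ is not a priori convex (quasiconcavity of $u$ is not known), so $\mc{C}_v(x,y,\lambda)$ is only meaningful under the extra assumption $[x,y]\subset\Lambda_\sigma$. One must therefore verify that an interior maximum of $\mc{C}_v$ over this admissible set cannot escape to its boundary; this is handled by running the same boundary-behaviour argument as in Part 1, now applied to $v=\log(u-\sigma)$, which blows down to $-\infty$ as $u\to \sigma^+$, so that the Grossi--Porru mechanism still localizes a positive maximum in the interior of $\Lambda_\sigma\times\Lambda_\sigma$.
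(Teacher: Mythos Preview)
Your proposal follows essentially the same route as the paper: the first assertion is handled via the Grossi--Porru boundary lemma \cite[Lemma 3.2]{GrPo93} for $\mc{C}^m_{\log u}$, and the second via the shifted transformation $v=\log(u-\sigma)$, which yields the equation $-\Delta v=|\nabla v|^2+a(x)(1+\sigma e^{-v})$ with $\partial_t B=-\sigma a(x)e^{-t}\le -\sigma\,\mathfrak m_\delta\,e^{-\|u\|_\infty}=:-\mu_\sigma$, so that Theorem~\ref{thm_approx_concav_bucsquas} applies and gives the bound $\tfrac{1}{\mu_\sigma}(2+\osc(a)/\mathfrak m_\delta)\osc(a)$, i.e.\ exactly $\tfrac{1}{\sigma}C_{a,u,\delta}\osc(a)$. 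Your identification of the possible non-convexity of $\Lambda_\sigma$ and the need to restrict to segments $[x,y]\subset\Lambda_\sigma$ is a point that the paper leaves implicit; note, however, that your proposed fix via ``the Grossi--Porru mechanism'' does not literally apply to the full concavity function on a non-convex set (the midpoint $z$ could approach $\partial\Lambda_\sigma$ while $x,y$ stay away, which would send $\mc{C}_v\to+\infty$, not $-\infty$), so one should rather argue via the blow-down of $v(x)$ or $v(y)$ at the endpoints together with the constraint $[x,y]\subset\Lambda_\sigma$, or work on the convex set $\Omega_{\delta_0}\subset\Lambda_\sigma$ produced in the first step.
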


\begin{proof}
By \cite[Lemma 3.2]{GrPo93} (see also \eqref{eq_distante_bordo} and \cite{Mor24}) 
$\mc{C}^m_{\log(u)}$ does not attain a maximum near the boundary $\partial (\Omega \times \Omega)$. Thus there exists $\Omega_{\delta_0}$ such that
$$\mc{C}_{\log(u)}^m(x,y) \leq \max_{\Omega_{\delta_0} \times \Omega_{\delta_0}} \mc{C}_{\log(u)}^m \quad \hbox{for $(x, y) \in \big(\Omega \times \Omega) \setminus\big(\Omega_{\delta_0}\times \Omega_{\delta_0}\big)$.}$$
For $\sigma$ sufficiently small we have $\Omega_{\delta_0} \subset \Lambda_{\sigma}$.
Consider the transformation $\varphi_{\sigma}: (\sigma, +\infty) \to \R$, 
$\varphi_{\sigma}(t):=\log(t-\sigma),$ 
with inverse $\psi_{\sigma}: \R\to (\sigma, +\infty)$ given by $\psi_{\sigma}(t)=e^t+\sigma$. 
Set $v_{\sigma}:= \varphi_{\sigma}(u)$.
The function $v_{\sigma}$ satisfies
$$-\Delta v_{\sigma} = \frac{\psi_{\sigma}''(v_{\sigma})}{\psi_{\sigma}'(v_{\sigma})} |\nabla v_{\sigma}|^2 + \frac{f(x,\psi_{\sigma}(v_{\sigma}))}{\psi_{\sigma}'(v_{\sigma})}$$
where $\frac{\psi_{\sigma}''(t)}{\psi_{\sigma}'(t)} \equiv 1$, which is positive and nonincreasing, and
$$ \frac{f(x,\psi_{\sigma}(t))}{\psi_{\sigma}'(t)} = a(x) \frac{e^{t} +\sigma}{e^{t}} = a(x) \left(1 +\sigma e^{-t}\right)$$
which is positive and decreasing in $t$. 
In particular $\partial_t \Big(\frac{f(x,\psi_{\sigma}(t))}{\psi'_{\sigma}(t)}\Big) = - \sigma a(x) e^{-t}$ and thus
$$\partial_t \bigg(\frac{f(x,\psi_{\sigma}(t))}{\psi'_{\sigma}(t)}\bigg) \leq - \sigma e^{-\norm{u}_{\infty}} \mathfrak{m}_{\delta} =: - \mu_{\sigma} < 0$$
which implies, by Theorem \ref{thm_approx_concav_bucsquas}, 
$$\mc{C}_{\log(u-\sigma)}(x,y,\lambda)(x,y,\lambda) \leq \frac{1}{\mu_{\sigma}} \bigg( 2 + \frac{\osc(a)}{\mathfrak{m}_{\delta}}\bigg) \osc(a)$$
for each $[x,y] \subset \{u > \sigma\}.$
\end{proof}

\subsection{Singular equations}
\label{sec_singular}

We consider now singular equations, that is $q<0$.%
\footnote{We thank Lorenzo Brasco for having brought to our attention this problem.}
To deal with this case, we introduce a regularized problem (with respect to the source), apply previous results and then pass to the limit.
We start by recalling the following results in \cite[Theorems 1.3 and 1.5]{CST16} (see therein the definition of solution) and \cite[Corollary 2.4]{CaDe04}; see also \cite[Teorema 2.3]{Gla02} for the case $a(x) \equiv 1$ and \cite{BoOr10} for the case $p=2$.

\begin{theorem}[\cite{CST16, CaDe04}]
\label{thm_exist_singular}
Let $\Omega \subset \R^N$, $N\geq 2$, be open, bounded, with $\partial \Omega \in C^{2,\alpha}$, and let $p \in (1,+\infty)$.
Assume $q \in (-\infty, 0)$ and $a \in L^{\infty}(\Omega)$. 
Then there exists a unique solution $u$ of \eqref{eq_general_problem*}. 
If $q \in [-1,0)$ then $u \in W^{1,p}_0(\Omega)$, while if $q \in (-\infty, -1)$ then $u^{\frac{p-1-q}{p}} \in W^{1,p}_0(\Omega)$. 
Moreover, if $a$ is constant, then $u \in C(\overline{\Omega})$.
\end{theorem}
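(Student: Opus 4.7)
The plan is to prove existence by approximation of the singular nonlinearity, deriving energy estimates adapted to the exponent $q$, and then passing to the limit; uniqueness will follow from the monotonicity of $t \mapsto a(x) t^q$ via a Díaz--Saa-type comparison.

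First, for each $n \in \N$ I would consider the regularized problem $-\Delta_p u_n = a(x) (u_n + 1/n)^q$ in $\Omega$ with $u_n = 0$ on $\partial \Omega$. Since $q<0$, the right-hand side is now bounded (by $\|a\|_\infty n^{-q}$), so standard variational or sub-/super-solution methods produce a unique positive $u_n \in W^{1,p}_0(\Omega) \cap C^{1,\beta}(\overline{\Omega})$ (cf.\ the regularity discussion in Section \ref{sec_approx_argu}). Because $t \mapsto (t+1/n)^q$ is decreasing in $t$ and $n \mapsto (t+1/n)^q$ is increasing in $n$, weak comparison shows that $(u_n)_n$ is monotone increasing, and comparison with a large multiple of the $p$-torsion function (or an explicit sub-solution constructed from $(1-|x|^p)^{1/(p-1)}$-type barriers, rescaled) gives a uniform $L^\infty$-bound $u_n \leq C$.

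Next comes the key energy estimate. The admissible transformation is $v_n := u_n^{(p-1-q)/p}$, and the natural test function in the equation for $u_n$ is $u_n^{-q}$ (which lies in $W^{1,p}_0(\Omega)$ by the boundary Lipschitz regularity of $u_n$): integration by parts gives
\begin{equation*}
(-q) \int_\Omega u_n^{-q-1} |\nabla u_n|^p\, dx = \int_\Omega a(x)\, dx \leq \|a\|_\infty |\Omega|,
\end{equation*}
which is exactly an equibound for $v_n$ in $W^{1,p}_0(\Omega)$, by virtue of the identity $|\nabla v_n|^p = \bigl(\tfrac{p-1-q}{p}\bigr)^p u_n^{-(q+1)}|\nabla u_n|^p$. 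In the subrange $q \in [-1,0)$ one alternatively tests directly with $u_n$ to obtain $\int |\nabla u_n|^p \lesssim \int u_n^{1+q} \leq C$, since $1+q \geq 0$, which yields the stronger conclusion $u_n \in W^{1,p}_0(\Omega)$ with a uniform bound. A Hopf-type lower bound, obtained from a sub-solution of the form $c\, d(x,\partial \Omega)^{p/(p-1-q)}$ built on an interior-sphere geometry, ensures $u_n \geq \underline{u} > 0$ on compact subsets of $\Omega$, so that the limit remains strictly positive.

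The passage to the limit is then standard: by monotonicity $u_n \nearrow u$ pointwise, the uniform $W^{1,p}_0$-bound on $v_n$ (resp.\ on $u_n$) combined with the $L^\infty$-bound yields $\nabla u_n \to \nabla u$ in $L^p_{loc}$ via Boccardo--Murat compactness for the $p$-Laplacian; Fatou takes care of the global bounds. The limit $u$ satisfies the equation in the sense made precise in \cite{CST16}, and one has $u^{(p-1-q)/p} \in W^{1,p}_0(\Omega)$ in all cases, and $u \in W^{1,p}_0(\Omega)$ when $q \in [-1,0)$. Uniqueness is then a direct consequence of the hidden-convexity/Díaz--Saa inequality applied to $u \mapsto \int |\nabla u|^p$ composed with a suitable power transformation, together with the strict monotonicity of $t\mapsto a(x) t^q$ in $t$.

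The main obstacle lies in the range $q<-1$: here the variational structure on $W^{1,p}_0$ collapses (the would-be primitive of $t^q$ blows up on the boundary), so one is forced to work in the transformed variable $v = u^{(p-1-q)/p}$. The admissibility of the test function $u_n^{-q}$ up to $\partial\Omega$, as well as the sharpness of the boundary behavior recovered for the limit, hinge crucially on the Hopf-type lower bound for the approximating family, which in turn requires the careful construction of distance-function sub-solutions and exploits the $C^{2,\alpha}$-regularity of $\partial \Omega$.
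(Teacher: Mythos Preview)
The paper does not supply its own proof of this theorem: it is explicitly quoted from \cite[Theorems 1.3 and 1.5]{CST16} (with related references \cite{CaDe04,Gla02,BoOr10}), and the proof is deferred entirely to that source. So there is no ``paper's proof'' to compare against; what one can say is that your outline is essentially the strategy of \cite{CST16} itself --- regularize via $(u+1/n)^q$, obtain monotonicity and a uniform $L^\infty$ bound by comparison, derive the key energy estimate by testing with $u_n^{-q}$ (which is exactly what produces the bound on $u_n^{(p-1-q)/p}$ in $W^{1,p}_0$), test instead with $u_n$ when $q\in[-1,0)$, pass to the limit, and conclude uniqueness by a D\'iaz--Saa/hidden-convexity argument adapted to the power transformation. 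You have also correctly located the main technical pressure point (the range $q<-1$, where $u\notin W^{1,p}_0$ and one must work through the transformed variable).

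One small correction: the displayed identity $(-q)\int u_n^{-q-1}|\nabla u_n|^p = \int a(x)$ should be an inequality, since testing yields $\int a(x)(u_n+1/n)^q u_n^{-q}$ on the right, and $(u_n+1/n)^q u_n^{-q}\le 1$ (with equality only in the limit). Also, the uniform $L^\infty$ bound is not obtained by comparison with a \emph{multiple} of the $p$-torsion function (the right-hand side $a(x)(u_n+1/n)^q$ is not uniformly bounded in $n$); one rather uses a supersolution built from a suitable power of the torsion function or distance, matching the boundary behaviour $d^{p/(p-1-q)}$ you mention for the subsolution. These are cosmetic; the architecture is right and coincides with the cited source.
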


We focus on the case $a(x)\equiv 1$ and $q \in [-1, 0)$.
Consider, for any $\eta>0$, the regularized equation
$$
\begin{cases}
-\Delta_p u_{\eta} = (u_{\eta} + \eta )^q & \quad \hbox{ in $\Omega$},\\ 
u_{\eta} >0 & \quad \hbox{ in $\Omega$}, \\
u_{\eta} =0 & \quad \hbox{ on $\partial \Omega$},
\end{cases}
$$
so that we have
$$f(x,t) \equiv g(t)=(t+\eta)^q.$$
In this case, it is hard to deduce concavity properties of powers of $u_{\eta}$, which makes the approach due to \cite{Kor83Co, Ken85} difficult to apply. 
We will base the proof thus on the abstract approach developed here and in \cite{BMS22}.
We obtain the following result.
\begin{theorem}
Let $\Omega \subset \R^N$, $N\geq 2$, be open, bounded, convex, with $\partial \Omega \in C^{2,\alpha}$, and let $p\in (1,+\infty)$. Let $u \in W^{1,p}_0(\Omega)\cap C(\overline{\Omega})$ be the solution of \eqref{eq_general_problem*} with $q \in [-1,0)$ and $a(x)\equiv 1$. Then $u^{\frac{p-1-q}{p}}$ is concave.
\end{theorem}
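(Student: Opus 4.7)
My plan is to approximate the singular problem by a regular family, exactly as anticipated in the Main Difficulties paragraph: for each $\eta > 0$ let $u_\eta \in C^{1,\beta}(\overline\Omega)$ be the unique positive solution of
\[
-\Delta_p u_\eta = g_\eta(u_\eta) := (u_\eta + \eta)^q \quad \text{in } \Omega, \qquad u_\eta = 0 \quad \text{on } \partial\Omega,
\]
whose existence and regularity are standard since $g_\eta$ is smooth, bounded by $\eta^q$ and positive on $[0,\infty)$. As $\eta \mapsto g_\eta(t)$ is monotonically decreasing for $q < 0$, comparison yields that $u_\eta$ is monotonically increasing as $\eta \to 0^+$; the monotone limit must coincide with the unique solution $u$ provided by Theorem \ref{thm_exist_singular}, and Dini's theorem on any compact $K \Subset \Omega$, together with the strong maximum principle, gives uniform convergence on $K$ and a positive uniform lower bound $u_\eta \geq \inf_K u > 0$.

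The heart of the argument is to apply the abstract exact concavity result (Theorem \ref{thm_gen_exact_concav}, or equivalently the criterion in Remark \ref{rem_BMS_assumptions}) to the regularized equation, taking in \eqref{eq_particular_f_choice} the decomposition $h \equiv 1$, $k \equiv 0$, $g = g_\eta$. The associated transformation is
\[
\varphi_\eta(t) := \int_1^t G_\eta(s)^{-1/p}\,ds, \qquad G_\eta(s) := \int_0^s (\tau+\eta)^q\,d\tau,
\]
and both BMS conditions can be verified by direct computation: $G_\eta^{1/p}$ is concave because $G_\eta''(s) = q(s+\eta)^{q-1} \leq 0$ together with $G_\eta > 0$ forces $p\,G_\eta G_\eta'' \leq 0 \leq (p-1)(G_\eta')^2$; harmonic concavity of $g_\eta/G_\eta$ is equivalent to convexity of $G_\eta/g_\eta$, which for $q \in (-1,0)$ reads $\frac{1}{q+1}\bigl[(t+\eta) - \eta^{q+1}(t+\eta)^{-q}\bigr]$ with second derivative $-q\,\eta^{q+1}(t+\eta)^{-q-2} > 0$, and for $q=-1$ reads $(t+\eta)\log\bigl((t+\eta)/\eta\bigr)$ with second derivative $(t+\eta)^{-1} > 0$. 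Consequently $\varphi_\eta(u_\eta)$ is concave on $\Omega$ for every $\eta > 0$.

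For $q \in (-1,0)$ the passage to the limit is transparent: $G_\eta(s) \to s^{q+1}/(q+1)$ pointwise, locally uniformly on $(0,\infty)$, so $\varphi_\eta \to \varphi_0$ locally uniformly, with
\[
\varphi_0(t) = \frac{p\,(q+1)^{1/p}}{p-1-q}\bigl(t^{(p-1-q)/p}-1\bigr).
\]
Combined with the uniform convergence $u_\eta \to u$ on compact subsets of $\Omega$ (where $u$ is bounded away from zero), this gives $\varphi_\eta(u_\eta) \to \varphi_0(u)$ locally uniformly; since pointwise limits of concave functions are concave and $\varphi_0$ is affinely equivalent to $t \mapsto t^{(p-1-q)/p}$, the claim follows.

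The main obstacle is the endpoint $q = -1$, for which the unnormalized $\varphi_\eta$ tends to zero, while the target exponent $(p-1-q)/p = 1$ asks for plain concavity of $u$. The remedy is to rescale by a positive factor (which preserves concavity of $\varphi_\eta(u_\eta)$), setting $\tilde\varphi_\eta := (\log(1/\eta))^{1/p}\,\varphi_\eta$. Writing $\log(1+s/\eta) = \log(s+\eta) + \log(1/\eta)$ one checks that $\log(1+s/\eta)/\log(1/\eta) \to 1$ as $\eta \to 0^+$ with uniform control on compact subsets of $(0,\infty)$, so dominated convergence gives $\tilde\varphi_\eta(t) \to t - 1$ locally uniformly. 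Passing to the limit then yields concavity of $u - 1$, equivalently of $u = u^{(p-1-(-1))/p}$, completing the proof.
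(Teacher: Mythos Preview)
Your proof is correct and follows essentially the same route as the paper: regularize the singular nonlinearity by $g_\eta(t)=(t+\eta)^q$, verify the BMS conditions of Remark~\ref{rem_BMS_assumptions} for $G_\eta$ (concavity of $G_\eta^{1/p}$ and convexity of $G_\eta/g_\eta$), deduce concavity of $\varphi_\eta(u_\eta)$ from Theorem~\ref{thm_gen_exact_concav}, and pass to the limit, with the rescaling by $(\log(1/\eta))^{1/p}$ at the endpoint $q=-1$. The only minor difference is in the convergence step: the paper quotes \cite[Lemma~4.5 and proof of Theorem~4.6]{CST16} to get $u_\eta\to u$ in $W^{1,p}_0(\Omega)$ directly, whereas you argue via monotonicity in $\eta$, comparison, and Dini; your argument is fine but you gloss over the identification of the monotone limit with the solution $u$ of the singular problem, which ultimately also relies on the machinery of \cite{CST16}.
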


\begin{proof}
By \cite[Lemma 4.5 and proof of Theorem 4.6]{CST16} we have that 
$$u_{\eta} \to u \quad \hbox{in $W^{1,p}_0(\Omega)$;}$$
in particular almost everywhere and $u_{\eta}$ is dominated. 

Assume first $q \in (-1,0)$.
It is straightforward to verify that $g(t)=(t+\eta)^q$ satisfies the assumptions of Corollary \ref{corol_gen_exact_concav} (see in particular \eqref{eq_def_phi_trans}-\eqref{eq_phi_trans_techn} and Remark \ref{rem_BMS_assumptions} (i)-(ii)); in particular, since
$$G(t)=\frac{(t+\eta)^{q+1}-\eta^{q+1}}{q+1}$$
being $q \in (-1, 0)$, we have that $G^{1/p}$ is concave and $\frac{g}{G}$ is harmonic concave (since $\frac{G}{g}$ is convex). 
Therefore, for each $\eta>0$ we have $\varphi_{\eta}(u_{\eta})$ concave, where
$$\varphi_{\eta}(t) := \int_1^t \left((s+\eta)^{q+1}-\eta^{q+1}\right)^{-1/p} ds;$$
we notice that $\varphi_{\eta}$ cannot be expressed in terms of elementary functions. 
By $u_{\eta} \to u$ (notice that $\left((s+\eta)^{q+1}-\eta^{q+1}\right)^{-1/p} \leq s^{-\frac{q+1}{p}}$) we have (up to constants)
\begin{equation}\label{eq_conv_qo_sing}
\varphi_{\eta}(u_{\eta}) \to u^{\frac{p-1-q}{p}} \quad \hbox{almost everywhere}.
\end{equation}
Thus, being $u$ continuous, we obtain that $u^{\frac{p-1-q}{p}}$ is concave. 

Consider now $q= -1$; we have
$$G(t) = \log(t+\eta) - \log(\eta)$$
and again $G^{1/p}$ is concave and $\frac{g}{G}$ is harmonic concave. 
Therefore, for each $\eta>0$ we have $\varphi_{\eta}(u_{\eta})$ concave, where
$$\varphi_{\eta}(t) := \int_1^t \left(\log(s+\eta)-\log(\eta)\right)^{-1/p} ds; $$
again $\varphi_{\eta}$ does not have an elementary expression.
Being (observe that $\frac{\log(\frac{1}{\eta})}{\log(s+\eta)-\log(\eta)} \leq C_t $ for $s \in (t, +\infty)$)
$$\big(\log(\tfrac{1}{\eta})\big)^{\frac{1}{p}} \varphi_{\eta}(t) \sim t - 1 \quad \hbox{as $\eta \to 0$}$$
we have (up to constants)
$$\big(\log(\tfrac{1}{\eta})\big)^{\frac{1}{p}} \varphi_{\eta}(u_{\eta}) \to u $$
and hence $u $ is concave (notice $\frac{p-1-q}{p}=1$ when $q=-1$).
\end{proof}

\begin{remark}
Consider \eqref{eq_conv_qo_sing}. Being $\varphi_{\eta}(u_{\eta})$ concave on the bounded set $\Omega$, converging almost everywhere pointwise to $u^{\frac{p-1-q}{p}}$, then a known result \cite[Theorem 10.8]{Roc97} automatically implies that $\varphi_{\eta}(u_{\eta}) \to u^{\frac{p-1-q}{p}}$ in $L^{\infty}_{loc}(\Omega)$. 
We highlight that one may use this result to directly obtain the concavity of $u^{\frac{p-1-q}{p}}$ and gaining the continuity of $u$ in $\Omega$ \emph{a posteriori}, furnishing an alternative proof to \cite{CaDe04}.
\end{remark}

\begin{remark}
\label{rem_comp_singular}
(i) The case $q < -1$ was treated in the semilinear framework $p=2$, in a different way by \cite{BGP99}, by dealing with the boundary through a direct use of the equation itself and a maximum principle on the function $x \mapsto |\nabla u|^2 + \frac{2}{q+1} u^{q+1}$ (see \cite[Lemmas 3.1 and 3.3]{BGP99}). 
In our setting, the range $q < -1$ creates more difficulties for the applicability of the concavity results; in particular $t \mapsto t^{\frac{p-1-q}{p}}$ is no more concave with singular derivative.

We notice that also regularity issues arise when $q < -1$ (see Theorem \ref{thm_exist_singular} and \cite{CES19, Gua21}). 
In particular, the solution does not generally belong to $W^{1,p}_0(\Omega)$, but $u^{\frac{p-1-q}{p}}$ does, where the power is exact the same of the expected concavity.

(ii) Consider now $a(x)$ nonconstant. To apply the arguments of Section \ref{sec_gener_result}, the very first thing to check is if $t \mapsto t^2 \frac{\psi''(t)}{\psi'(t)}$ is concave, which is stronger (see also \cite[Proposition B.3]{GMrS25}) and more difficult than showing $t \mapsto \frac{\psi'(t)}{\psi''(t)}$ convex (as done in \cite{BMS22}, see also Remark \ref{rem_BMS_assumptions}); both leads to the harmonic concavity of $ \frac{\psi''(t)}{\psi'(t)}$, but in our case we need the above property to deal with sums (see \eqref{eq_ipot_psi_concav}). 
We have
$$t^2 \frac{\psi''(t)}{\psi'(t)} = \left( \varphi^2 G^{1/p} \tfrac{g}{G}\right)(\psi(t))$$
which seems not easy to handle (recall that, in our case, $\varphi$ and $\psi$ are not explicit);
some numerical simulations for $q=-1+\frac{1}{n}$ suggest indeed that the above function is not always concave.
\end{remark}

\section{Further results}
\label{sec_further_results}

In this Section we furnish several results in different frameworks: the techniques mainly rely on perturbation arguments, based on fine estimates.

\subsection{Superhomogeneous equations}
\label{sec_superhomog}

In the superhomogeneous case $f(x,t)= g(t)=t^q$ with $q>1$, quasiconcavity was already conjectured by Sacks (see \cite[Remark 9]{Kaw85W}). 
As for the semilinear case \cite{Lin94,LeVa08}, we expect the solutions to be $\frac{p-1-q}{p}$-concave (i.e. $u^{-\frac{q-p+1}{p}}$ is convex) in $\Omega$ for any $q \in (p-1, p^*-1)$; notice that this is weaker than the $\log$-concavity. 
We further remark that this power of $u$ is related to the definition of \emph{pressure} in the porous medium equation \cite{LeVa08} and it has a relevant role in the classification of solutions of the Sobolev critical equation through the use of the so called \emph{P-function}
\cite{CFP24}
(see also \cite{SeZo02,CFR20} for $p<N$ and $q=p^*-1$).

When $p=2$ we see that the equation solved by $v=\varphi(u)=-u^{-\frac{q-1}{2}}$ (which we want to show being concave) is
$$- \Delta v = - \frac{1}{v}\left(\frac{q+1}{q-1} |\nabla v|^2 + \frac{q-1}{2}\right);$$
thus even in the semilinear case we see that the nonlinearity does not satisfy the basic assumption on the monotonicity of Theorem \ref{thm_concav_inter_class}, i.e. it is not nonincreasing. 
Notice anyway that, being negative, it is harmonic concave by direct definition, and moreover $\lim_{t\to 0} \varphi'(t) = +\infty$ (see Corollary \ref{corol_concav_bounda}).

The strategy employed by \cite{Lin94} is the following: the author exploits the strong log-concavity of the first eigenfunction to show that for $q$ near $1$ the solutions are quasiconcave, while through a continuation argument it is shown to hold for each $q>1$ -- when the solution is a ground state -- see Remark \ref{rem_lin_argument}. 
We recall that a ground state is (up to a scaling) a minimizer of $\inf\left\{ \int_{\Omega} |\nabla u|^p \mid u \in W^{1,p}_0(\Omega), \; \int_{\Omega} |u|^q = 1\right\}$.
We refer also to \cite[Lemma 4.8]{LeVa08} where an evolutive argument has been used (see also \cite{Lio81, Ken88}).

We know by \cite[Lemma 3]{Lin94} (see also \cite[Proposition 4.3]{BrFr20}) that there exists a unique positive solution whenever $\Omega$ is bounded and convex, $p=2$ and $q>1$, $q$ close to $1$.
For a general $p>2$, in \cite[Corollary 1.4]{BrLi23} they recently showed the uniqueness of the positive ground state when $q>p-1$, $q$ close to $p-1$, and $\Omega$ is connected and $\partial \Omega \in C^{1,\alpha}$. 
In a similar setting, here we show concavity properties of a general solution of the equation, through a uniform convergence (see also \cite{GrPa08}).

\begin{theorem}
\label{thm_superhomog}
Let $\Omega \subset \R^N$ $N\geq 2$, be open, bounded, $\partial \Omega \in C^{1,\alpha}$, and let $p\in (1,+\infty)$. 
Consider
$$\begin{cases}
-\Delta_p u_q = u_q^{q} & \quad \hbox{ in $\Omega$},\\ 
u_q >0 & \quad \hbox{ in $\Omega$}, \\
u_q =0 & \quad \hbox{ on $\partial \Omega$},
\end{cases}$$
with $q \in (p-1, p^*-1)$. 
Then $v_q:=\frac{u_q}{\norm{u_q}_{\infty}}\to u$ in $L^{\infty}(\Omega)$ as $q \to p -1$, where $u$ is the first eigenfunction of the $p$-Laplacian in $\Omega$, normalized with $\norm{u}_{\infty}=1$.

Assume now in addition $\Omega$ convex. Then $\log(u)$ is concave and
$$\norm{\log(v_q) - \log(u)}_{\infty} \to 0 \quad \hbox{as $q \to p-1$}$$
together with
$$\log(v_q) \to \log(u) \quad \hbox{in $L^{\infty}(\Omega_{\delta})$}$$
for each $\delta>0$. 
As a consequence:
\begin{itemize}
\item if $N=2$, then for each $\eps>0$ we have $\log(u_q)$ is $\eps$-uniform concave in $\Omega_{\delta}$ for $q \in (p-1, q_0(\Omega,p,\eps, \delta))$.
\item if $p=2$, then $\log(u_q)$ is strongly concave in $\Omega_{\delta}$, for $q \in (p-1, q_0(\Omega,N,\delta))$; in particular, the level sets of $u_q$ in $\Omega_{\delta}$ 
are strictly convex and $u_q$ has a single (and nondegenerate) critical point in $\Omega_{\delta}$. 
\end{itemize}
\end{theorem}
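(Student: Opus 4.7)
The plan is in three broad stages: pass to the rescaled eigenvalue limit, upgrade the convergence enough to obtain the logarithmic version uniformly on all of $\Omega$, and finally transfer the concavity of the limit through Propositions \ref{prop_strong_conc} and \ref{prop_eps_strong_conc}. Set $v_q := u_q/\norm{u_q}_\infty$ and $\lambda_q := \norm{u_q}_\infty^{q-(p-1)}$, so that $v_q$ solves $-\Delta_p v_q = \lambda_q v_q^q$ in $\Omega$ with $v_q = 0$ on $\partial\Omega$ and $\norm{v_q}_\infty = 1$. Testing against $v_q$ and using $v_q^{q+1}\leq v_q^p$ together with the variational characterization of $\lambda_1$ gives $\lambda_q \geq \lambda_1$. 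For the upper bound I compare the Rayleigh-type quotient of $v_q$ with that of the (suitably rescaled) first eigenfunction; using that near $q = p-1$ the positive solution is unique (\cite{Lin94, BrLi23}) and depends continuously on $q$ through a standard minimax argument, I obtain $\lambda_q \leq C$ uniformly on a right-neighborhood of $p-1$. With $\lambda_q v_q^q$ uniformly bounded in $L^\infty(\Omega)$, Lieberman's estimate \cite{Lie88} yields $\norm{v_q}_{C^{1,\beta}(\overline\Omega)} \leq C$ uniformly in $q$, and Arzelà-Ascoli gives (along subsequences) $v_q \to v_\infty$ in $C^1(\overline\Omega)$ and $\lambda_q\to\lambda_\infty$. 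The limit solves $-\Delta_p v_\infty = \lambda_\infty v_\infty^{p-1}$, $v_\infty \geq 0$, $v_\infty = 0$ on $\partial\Omega$, $\norm{v_\infty}_\infty = 1$; the strong maximum principle forces $v_\infty > 0$ in $\Omega$, hence $\lambda_\infty = \lambda_1$ and $v_\infty = u$, the normalized first eigenfunction. Uniqueness of the limit upgrades the convergence to the full family.

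For the logarithmic convergence, the statement in $L^\infty(\Omega_\delta)$ is immediate: by Hopf $u\geq c_\delta>0$ there, hence $v_q\geq c_\delta/2$ for $q$ close to $p-1$, and $\norm{\log v_q - \log u}_{L^\infty(\Omega_\delta)} \leq C_\delta \norm{v_q-u}_\infty \to 0$. The global $L^\infty(\Omega)$ statement is more delicate. By Hopf \cite[Theorem 5]{Vaz84} applied to $u$ and a uniform-in-$q$ comparison with a torsion-type subsolution tailored to the interior sphere condition at each boundary point, I establish the two-sided linear bound $c_1 d(x,\partial\Omega) \leq v_q(x), u(x) \leq c_2 d(x,\partial\Omega)$ in a neighborhood of $\partial\Omega$ with constants independent of $q$. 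Since $v_q-u$ vanishes on $\partial\Omega$, integrating $\nabla(v_q-u)$ along inward normals gives $|v_q(x)-u(x)|\leq \norm{v_q-u}_{C^1(\overline\Omega)}\, d(x,\partial\Omega)$, and therefore
$$\left|\log\frac{v_q(x)}{u(x)}\right| \leq \frac{|v_q(x)-u(x)|}{\min\{v_q(x),u(x)\}} \leq \frac{1}{c_1}\norm{v_q-u}_{C^1(\overline\Omega)} \to 0$$
uniformly in $x \in \Omega$.

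For the concavity consequences, the limit $u$ is log-concave by Theorem \ref{thm_intr_recap_plaplac} with exponent $q = p-1$ (which yields $0$-concavity). If $N=2$, $u$ is strictly log-concave in $\Omega$, so $\log u$ restricted to the compact set $\overline{\Omega_\delta}$ is uniformly concave by Proposition \ref{prop_eps_strong_conc}(i); combining the $L^\infty(\Omega_\delta)$ convergence with Proposition \ref{prop_eps_strong_conc}(ii) gives $\eps$-uniform concavity of $\log v_q$—and hence of $\log u_q$, which differs by a constant—on $\Omega_\delta$ for $q\in (p-1, q_0(\eps,\delta))$. If $p=2$, the equation is semilinear and on $\Omega_\delta$ the coefficient $v_q^{q-1}$ is $C^{0,\alpha}$-bounded away from $0$ and $\infty$; classical Schauder estimates upgrade the convergence to $C^2(\Omega_\delta)$, while Theorem \ref{thm_intr_recap} gives strong log-concavity of $u$ on every $\Omega'\Subset\Omega$. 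Proposition \ref{prop_strong_conc} then transfers strong log-concavity to $\log u_q$ on $\Omega_\delta$ for $q$ in a small right-neighborhood of $p-1$, and from strong concavity one reads off strict convexity of the level sets and uniqueness and nondegeneracy of the critical point.

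The main obstacle is the uniform Hopf lower bound $v_q(x)\geq c_1 d(x,\partial\Omega)$ with $c_1$ independent of $q$ required in the second step; without it one cannot control $(v_q-u)/u$ near $\partial\Omega$ and the global logarithmic convergence would fail. This ingredient comes from the uniform $C^{1,\beta}(\overline\Omega)$ bound combined with the $C^1$-proximity $v_q\to u$ and a standard barrier construction, but it is the place where all the previous regularity work is used in an essential way.
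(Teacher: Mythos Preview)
Your convergence and concavity steps (the second and third stages) are essentially the same as the paper's, and your treatment of the global logarithmic convergence via a uniform Hopf barrier is in fact more detailed than what the paper writes. However, there is a genuine gap in your first stage, namely in the upper bound $\lambda_q \leq C$.

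The theorem is stated for an \emph{arbitrary} family of positive solutions $u_q$, not for ground states. Your argument for the upper bound reads: compare a Rayleigh-type quotient of $v_q$ with that of the first eigenfunction, and invoke uniqueness and continuous dependence near $q=p-1$. This does not close. First, for general $p$ the reference \cite{BrLi23} only gives uniqueness of the positive \emph{ground state}, not of all positive solutions, so you cannot identify $u_q$ with a minimizer; and even when uniqueness is available, deducing ``continuous dependence on $q$ through a standard minimax argument'' already presupposes the very a priori bounds you are trying to establish (compactness in $C^1$ comes \emph{from} the bound on $\lambda_q$, not the other way around). Second, the Rayleigh quotient $\lambda_q=\int|\nabla v_q|^p/\int v_q^{q+1}$ can in principle blow up if $v_q$ concentrates, since $\int v_q^{q+1}$ has no uniform lower bound coming merely from $\norm{v_q}_\infty=1$; comparing with the first eigenfunction does not help because $v_q$ is not assumed to be a minimizer. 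The paper handles this by a Gidas--Spruck blow-up: assuming $\lambda_q\to\infty$, one rescales around a maximum point by $\eps_q=\lambda_q^{-1/p}$ and passes to a $C^{1,\alpha}_{loc}$ limit solving $-\Delta_p u^*=(u^*)^{p-1}$ on $\R^N$ or $\R^N_+$ with $u^*(0)=1$; the strong maximum principle then forces $u^*>0$ everywhere, contradicting the Liouville theorems of \cite{ILS08,SeZo02}. This argument uses neither uniqueness nor any variational structure of $u_q$, which is the point.

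A secondary issue: you invoke Lieberman's $C^{1,\beta}(\overline\Omega)$ estimate, which needs $\partial\Omega\in C^{1,\alpha}$, while the first part of the statement assumes only Lipschitz. The paper uses interior $C^{1,\alpha}$ estimates \cite{Dib83,Tol84} together with global H\"older continuity from \cite{Tru67}, which suffices for the $L^\infty(\Omega)$ convergence. Your $C^1(\overline\Omega)$ route and the uniform Hopf barrier for the global $\norm{\log(v_q/u)}_\infty$ estimate are legitimate under stronger boundary regularity, but as written they overshoot the hypotheses of the theorem.
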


\begin{proof}
By \cite[Corollary 4.2]{Tru67} (see also \cite[Corollary 3.7]{CaDe04}) we have $u_q \in C(\overline{\Omega})$. 
We employ a Gidas-Spruck blow-up procedure (see e.g. \cite{AzCl02}).
Set
$$M_q:= \norm{u_q}_{\infty}, \quad v_q:= \frac{u_q}{M_q}>0,$$
which solve
\begin{equation}\label{eq_q_large_rescaled}
\begin{cases}
-\Delta_p v_q = M_q^{q-p+1} v_q^{q} & \quad \hbox{ in $\Omega$},\\ 
v_q =0 & \quad \hbox{ on $\partial \Omega$}.
\end{cases}
\end{equation}
Notice that $\norm{v_q}_{\infty}=1$. We want to show that $M_q^{q-p+1} $ is equibounded. 
Assume by contradiction that $M_q^{q-p+1} \to +\infty$; let us set
$$u^*_q(x):= \frac{1}{M_q} u_q(\eps_q x +x_q), \quad x \in \Omega$$
where $\eps_q:=M_q^{-\frac{q-p+1}{p}} \to 0$, $x_q \in \Omega$ such that $u_q(x_q) = \norm{u_q}_{\infty}$.
We clearly have $u^*_q(0) = \norm{u^*_q}_{\infty}=1$. 
Moreover $u^*_q >0$ solve
$$\begin{cases}
-\Delta_p u^*_q = (u^*_q)^{q} & \quad \hbox{ in $\Omega_q$},\\ 
u^*_q =0 & \quad \hbox{ on $\partial \Omega_q$,}
\end{cases}$$
where $\Omega_q:= \frac{1}{\eps_q} (\Omega - x_q)$.
By \cite{Dib83, Tol84} and Ascoli-Arzelà theorem we have that $u^*_q$ converges in $C^{1,\alpha}_{loc}(\Lambda)$ to a function $u^* \geq 0$ which satisfies (weakly)
$$-\Delta_p u^* = (u^*)^{p-1} \quad \hbox{ in $\Lambda$}$$
where $\Lambda = \R^N_+$ or $\Lambda = \R^N$, depending on the fact that $x_q$ approaches or not $\partial \Omega$. 
Since $u^*(0)=1>0$, by the strong maximum principle \cite[Theorem 5]{Vaz84} we have that $u^*>0$.
But this is in contradiction with \cite[Lemma 2.1]{ILS08} (see also \cite[Theorem II]{SeZo02}). 
Thus $M_q^{q-p+1} $ is equibounded.

Let us come back to \eqref{eq_q_large_rescaled}. 
Arguing as before, one can show that, up to a subsequence, $v_q$ converge in $C^{1,\alpha}_{loc}(\Lambda)$ to some $u$, together with $M_q^{q-p+1} \to \lambda \in \R$, which thus satisfy
$$\begin{cases}
-\Delta_p u = \lambda u^{p-1} & \quad \hbox{ in $\Omega$},\\ 
u =0 & \quad \hbox{ on $\partial \Omega$}.
\end{cases}$$
Being $\norm{u}_{\infty}=1$, we have $u \nequiv 0$, thus $u >0$ by the strong maximum principle. Hence $u$ must coincide with the first eigenfunction, and $\lambda$ with the first eigenvalue. 
By a classical topological argument, we see that the whole family $v_q$ converges to $u$.

The $\eps$-uniform concavity comes from Theorem \ref{thm_intr_recap_plaplac} and Proposition \ref{prop_eps_strong_conc}. 
The case $p=2$ is already known, but let us give here some details: by the uniform estimates with respect to $q$ (see also \cite[Proposition 2.4 and Theorem 2.5]{BrLi23}) and regularity theory for the classical Laplacian \cite[Theorem 2.30]{FrRo22}, we know that $\log(u_q) \to \log(u)$ in $C^2(\Omega_{\delta})$. 
Moreover, by Theorem \ref{thm_intr_recap} we have the strong concavity of $\log(u)$, and thus the claim by Proposition \ref{prop_strong_conc}.
\end{proof}

\begin{remark}
\label{rem_lin_argument}
As already mentioned, similarly to the semilinear case, we expect the solutions to be $\frac{p-1-q}{p}$-concave for any $q \in (p-1, p^*-1)$, which is weaker than $\log$-concavity. Anyway, to conclude as in \cite{Lin94}, we would need the following three ingredients:
\begin{itemize}
\item[(i)] the first eigenfunction $u$ is strongly log-concave and $v_q \to u$ in $C^2_{loc}(\Omega)$;
\item[(ii)] the map $q \in (p-1, p^*-1)\mapsto u_q$ is well defined (that is, $u_q$ is unique) and continuous (in $C^2_{loc}$ topology, see also \cite{BrLi23});
\item[(iii)] an Hessian constant rank theorem in the spirit of \cite{KoLe87} holds, which allows to obtain strict concavity (strongly far from the boundary) from concavity.
\end{itemize}
We refer to \cite{GMsS24, Lin94} for details.
Due to (i)--(iii) this recipe seems not the right way to proceed for the quasilinear framework (see also comments in \cite{BMS23} regarding (iii)).
\end{remark}

\begin{remark}
We highlight that a uniform bound from below on $|\nabla u_q|$ (see \cite[Theorem 2.5]{BrLi23}) could be used, together with a strict quasiconcavity on $\Omega_{\delta}$ (if proved), to ensure that the critical point of $u_q$ is unique and nondegenerate in the whole $\Omega$, somehow extending Theorem \ref{thm_intr_recap_plaplac} to the superhomogeneous case.
We recall that in the semilinear case $p=2$ \cite{CaCh98} (see also \cite{DGM21}) the authors showed that every semistable solution has a unique critical point when $q \leq p-1$ and $a(x) \equiv 1$, but for $q>p-1$ every solution is unstable \cite[Theorem 2]{KaSi01}. 
When $q$ is sufficiently close to the critical exponent $p^*-1$, anyway, uniqueness of the critical point has been achieved by \cite{GlGr04}.
We refer also to \cite[Theorem 6.1]{Sci07} for results in the quasilinear setting in presence of symmetric domains.
\end{remark}

\subsection{Large $p$: towards strict quasiconcavity}

By \cite[Theorem 1]{Kaw90}, as $p\to +\infty$ we know that solutions $u_p$ of the torsion problem $-\Delta_p u_p = 1$ satisfy 
\begin{equation}\label{eq_conv_p_dist}
\lim_{p \to +\infty} u_p = d(\cdot,\partial \Omega) \quad \hbox{in $L^{\infty}(\Omega)$}
\end{equation}
which is a concave function; this is coherent with $\frac{p-1}{p} \to 1$. When $\Omega$ is strictly convex we have that the distance function is also strictly quasiconcave (see Proposition \ref{prop_boundar_approximat}). 
Moreover, when $\partial \Omega \in C^2$ and it is strongly convex, then \eqref{eq_conv_p_dist} can be improved \cite[Theorem 2]{Kaw90}.
We further mention that some partial concavity results when $p=+\infty$ are contained in \cite[Section 4]{Juu10}.

For any $p$ we already know that $u_p$ is quasiconcave: exploiting the ideas of the previous Sections, the strict quasiconcavity and the uniform convergence \eqref{eq_conv_p_dist}
we obtain the following result (see \cite[Definition 2.8]{GrRa22}).

\begin{proposition}
\label{prop_large_p}
Let $\Omega \subset \R^N$, $N\geq 1$, be open and strictly convex. Let $\eps>0$. 
Then there exists $p_0 =p_0(\Omega, N,\eps) \in (1,+\infty)$ such that, for any $p \in [p_0, +\infty)$, the positive solution of the torsion problem
$$\begin{cases}
-\Delta_p u_p = 1 & \quad \hbox{ in $\Omega$},\\ 
u_p =0 & \quad \hbox{ on $\partial \Omega$},
\end{cases}$$
is \emph{$\eps$-uniformly quasiconcave}, that is for some $\rho=\rho(\Omega, N,\eps, p)>0$ 
$$u_p \left(\tfrac{x+y}{2}\right)\geq \min \left\{u_p(x), u_p(y)\right\} + \rho \qquad \hbox{for each $x,y \in \Omega$, $|x-y|\geq \eps$}.$$
\end{proposition}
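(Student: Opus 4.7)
The plan is to combine Kawohl's uniform convergence result \eqref{eq_conv_p_dist} with the strict quasiconcavity of the distance function from Proposition \ref{prop_boundar_approximat}(ii), in the spirit of the argument used for Proposition \ref{prop_eps_strong_conc}. Throughout we interpret the quasiconcavity inequality in the statement with $\min$ in place of $\max$, as otherwise the maximum point of $u_p$ would produce an immediate contradiction.

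First, set $d(x):=d(x,\partial\Omega)$. By Proposition \ref{prop_boundar_approximat}(ii), since $\Omega$ is (bounded and) strictly convex, $d$ is continuous and strictly quasiconcave on $\overline{\Omega}$, i.e.
$$d\!\left(\tfrac{x+y}{2}\right) > \min\{d(x),d(y)\} \qquad \text{for every } x,y\in \overline{\Omega},\ x\neq y.$$
Define the compact set $\Lambda_\eps:=\{(x,y)\in \overline{\Omega}\times\overline{\Omega} : |x-y|\geq \eps\}$ and the continuous function
$$\Phi(x,y):=d\!\left(\tfrac{x+y}{2}\right)-\min\{d(x),d(y)\}.$$
Since $\Phi>0$ on $\Lambda_\eps$ and $\Lambda_\eps$ is compact, we obtain
$$\rho_0:=\min_{(x,y)\in \Lambda_\eps}\Phi(x,y)>0.$$

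Next, by \eqref{eq_conv_p_dist} (Kawohl's theorem in \cite{Kaw90}) we have $u_p \to d$ in $L^\infty(\Omega)$ as $p\to+\infty$; hence there exists $p_0=p_0(\eps)\in(1,+\infty)$ such that $\|u_p-d\|_\infty<\rho_0/4$ for every $p\geq p_0$. For such $p$ and for any $(x,y)\in \Lambda_\eps$, a triangle-inequality estimate yields
$$u_p\!\left(\tfrac{x+y}{2}\right) \geq d\!\left(\tfrac{x+y}{2}\right) - \tfrac{\rho_0}{4} \geq \min\{d(x),d(y)\} + \rho_0 - \tfrac{\rho_0}{4}$$
and
$$\min\{d(x),d(y)\} \geq \min\{u_p(x),u_p(y)\} - \tfrac{\rho_0}{4}.$$
Combining these two inequalities gives
$$u_p\!\left(\tfrac{x+y}{2}\right)-\min\{u_p(x),u_p(y)\} \geq \rho_0 - \tfrac{\rho_0}{2} = \tfrac{\rho_0}{2}=:\rho(\eps,p),$$
which is the desired $\eps$-uniform quasiconcavity with constant $\rho=\rho_0/2>0$.

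I do not foresee any real obstacle in this argument: the hard analytic work has already been done by the convergence \eqref{eq_conv_p_dist} and by the strict quasiconcavity of $d$ inherited from the strict convexity of $\Omega$. The only slightly delicate point is that strict quasiconcavity of $d$ must be upgraded to a positive uniform lower bound on $\Lambda_\eps$; this is immediate from compactness of $\overline{\Omega}\times\overline{\Omega}$ (hence of $\Lambda_\eps$) and continuity of $\Phi$, so an implicit boundedness of $\Omega$ must be assumed (natural for the torsion problem to be well-posed in $W_0^{1,p}$).
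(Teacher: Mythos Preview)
Your proof is correct and follows precisely the route sketched in the paper: invoke Kawohl's uniform convergence \eqref{eq_conv_p_dist}, the strict quasiconcavity of the distance function from Proposition~\ref{prop_boundar_approximat}(ii), and then transfer the uniform lower bound via compactness exactly as in Proposition~\ref{prop_eps_strong_conc}; you have simply written out the details that the paper leaves implicit. Your observation that the displayed inequality must read $\min$ rather than $\max$ is also correct (otherwise the statement fails at the maximum of $u_p$), and your remark that boundedness of $\Omega$ is implicitly needed is well taken.
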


\begin{proof}
It is sufficient to recall that $u_p$ converge uniformly to $d(\cdot, \partial \Omega)$, which is strictly quasiconcave, and argue as in Proposition \ref{prop_eps_strong_conc} (notice that a \emph{quasiconcavity function} can be defined straightforwardly).
\end{proof}

We highlight again that, if one could show that the relation obtained in Proposition \ref{prop_large_p} holds for $\eps=0$, then uniqueness and nondegeneracy of the critical point would hold for large $p$ in any dimension $N$, extending \cite{BMS23} in the case of strictly convex domains.

\begin{remark}
The behaviour as $p\to 1$ of the solutions of $-\Delta_p u_p = 1$ is quite more nasty, essentially blowing up at $+\infty$ or flattering 
to $0$ depending on the Cheeger constant of $\Omega$, see \cite[Section 3 and 4]{Kaw90} and \cite{BEM16}; in particular, if $\Omega$ is a ball $B_R(0)$, then
$u_p^{\frac{p-1}{p}} \to \frac{R}{N}.$

Regarding the eigenfunction problem, by \cite[Remark 10]{KaFr03} the solutions $u_p$ of $-\Delta_p u_p = \lambda_p u_p$ as $p\to 1$ essentially converge in BV to the characteristic function of the Cheeger set (recall that, by \cite{AlCa09}, such set is convex, thus such characteristic function is quasiconcave); 
the convergence as $p \to +\infty$ has been instead investigated in \cite{JLM99}.
\end{remark}

\subsection{Fractional equations}
\label{sec_fractional}

Very few is known regarding concavity results for fractional equations: some partial results are contained in \cite{Gre14} (see also \cite{JKS19} and references therein).
Anyway, the only result, close to our framework and known to the authors, is contained in \cite{Kul17}: here, when $N=2$, $p=2$, $s=\frac{1}{2}$ and $f \equiv 1$, the author shows that the positive solution of the torsion problem
$$
\begin{cases}
\sqrt{-\Delta} u = 1 & \quad \hbox{ in $\Omega$},\\ 
u =0 & \quad \hbox{ on $\Omega^c$,}
\end{cases}
$$
on $\Omega \subset \R^2$, is concave in $\Omega$ (actually strictly concave, for some class of domains $\Omega$). 
The advantage of this case is that the solution is exactly concave, thus there is no need of a transformation $\varphi(u)$, for which is not known if $\varphi(u)$ solves a different fractional PDE (since Leibniz rule does not hold for $(-\Delta)^s$); moreover, when $s=\frac{1}{2}$ the equation satisfied by the $s$-harmonic extension is simpler. 
On the other hand, the lack of a transformation with singular derivative, together with the Neumann boundary condition of the extended equation, brings to much more difficulties in handling the boundary.

By looking at the cases $s=\frac{1}{2}$ and $s=1$, we conjecture as in \cite{Kul17} that for $(-\Delta)^s u = 1$ the solution is $\frac{1}{2s}$-concave (which is coherent with $1=(-\Delta)^s u \stackrel{s \to 0} \to u$, $u$ constant, i.e. $\infty$-concave), or more likely $\min\{\frac{1}{2s},1\}$-concave. 
More generally, one may expect that the concavity properties of $(-\Delta)^s u_s =g(u_s)$ are better than the ones of $-\Delta u = g(u)$: in particular, one may expect that if $u$ is $\alpha$-concave, than $u_s$ is $\alpha$-concave as well. 
What we do in this Section is some partial results in this direction, through perturbation arguments.
 
Indeed, consider the $p$-fractional Laplacian 
$$(-\Delta)^s_p u(x):= \int_{\R^N} \frac{|u(x)-u(y)|^{p-2} (u(x)-u(y))}{|x-y|^{N+sp}} dy \quad \hbox{for $x \in \R^N$}$$
where the integral is in the principal value sense.
We highlight that the choice of the constant in front of the integral (in this case, equal to $1$) influences the statements of the following results.
Through a perturbation argument, we show the following.
\begin{theorem}
\label{thm_fractional_log}
Let $\Omega \subset \R^N$, $N\geq 2$, be open, bounded, with $\partial \Omega \in C^{1,1}$, and let $p\in (1,+\infty)$. 
Let $u_s\in W^{s,p}_0(\Omega)$ be positive solutions of
$$\begin{cases}
(-\Delta)^s_p u_s = \lambda_s u_s^{p-1} & \quad \hbox{ in $\Omega$},\\ 
u_s =0 & \quad \hbox{ on $\Omega^c$},
\end{cases}$$
normalized at $\norm{u_s}_p=1$. 
Then 
$$\norm{u_{s} - u}_{\infty} \to 0 \quad \hbox{as $s \to 1$}$$
where $u$ is the positive solution of
$$\begin{cases}
-\Delta_p u = \lambda u^{p-1} & \quad \hbox{ in $\Omega$},\\ 
u =0 & \quad \hbox{ on $\partial \Omega$},
\end{cases}$$
normalized at $\norm{u}_p=1$.
Assume now $\Omega$ convex. Then $\log(u)$ is concave and, for every $\delta>0$, we have
$$\norm{\log(u_{s}) - \log(u)}_{L^{\infty}(\Omega_{\delta})} \to 0.$$
As a consequence:
\begin{itemize}
\item if $N=2$, then for each $\eps>0$ we have $\log(u_s)$ is $\eps$-uniformly concave in $\Omega_{\delta}$ for $s \in (s_0( \Omega, p,\eps,\delta), 1)$.
\item if $p=2$, then for each $\eps>0$ we have $\log(u_s)$ is $\eps$-strongly concave in $\Omega_{\delta}$ for $s \in (s_0(\Omega,N, \eps,\delta), 1)$.
 \end{itemize}
\end{theorem}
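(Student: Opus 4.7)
The plan is to treat Theorem~\ref{thm_fractional_log} as a singular-perturbation result: reduce everything to an $L^\infty$ convergence $u_s \to u$ as $s \to 1$, and then transfer the log-concavity of the classical $p$-Laplacian eigenfunction (known from Theorem~\ref{thm_intr_recap_plaplac}, case $q=p-1$, $a \equiv 1$) to the fractional eigenfunction through the abstract perturbation tool Proposition~\ref{prop_eps_strong_conc}.

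First I would establish the convergence of the Rayleigh quotients. Writing
\[
\lambda_s = \inf\Big\{ C_{N,p,s}\,[v]_{s,p}^p : v \in W^{s,p}_0(\Omega),\ \|v\|_p = 1 \Big\},
\]
with $C_{N,p,s}$ the normalization implicit in the definition of $(-\Delta)^s_p$, Bourgain--Brezis--Mironescu type results adapted to the fractional $p$-Laplacian (Brasco--Parini--Squassina and related works) yield the $\Gamma$-convergence of $C_{N,p,s}[v]_{s,p}^p$ towards a constant multiple of $\|\nabla v\|_p^p$, and hence $\lambda_s \to \lambda$. Next, uniform-in-$s$ interior and boundary H\"older estimates for the fractional $p$-Laplacian (Iannizzotto--Mosconi--Squassina, Brasco--Lindgren--Schikorra), combined with the fixed normalization $\|u_s\|_p = 1$, give an equibounded family in $C^{0,\alpha}(\overline\Omega)$. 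By Ascoli--Arzel\`a a subsequence converges uniformly to a limit $\tilde u \geq 0$ with $\|\tilde u\|_p = 1$ and $-\Delta_p \tilde u = \lambda \tilde u^{p-1}$ in the weak sense. Since $\partial\Omega \in C^{1,1}$ and the first eigenvalue of $-\Delta_p$ is simple with a unique positive $L^p$-normalized eigenfunction (Anane, Lindqvist), we identify $\tilde u = u$, and the convergence holds for the whole family: $u_s \to u$ in $L^\infty(\Omega)$.

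To pass from $L^\infty$ to logarithmic convergence on $\Omega_\delta$, I would use the strong maximum principle for $-\Delta_p$ to get $c_\delta := \inf_{\overline{\Omega_\delta}} u > 0$; the uniform convergence then forces $\inf_{\Omega_\delta} u_s \geq c_\delta/2$ for $s$ close to $1$. Since $\log$ is Lipschitz on any compact interval bounded away from zero, the mean value theorem gives
\[
\|\log u_s - \log u\|_{L^\infty(\Omega_\delta)} \leq \tfrac{2}{c_\delta}\|u_s - u\|_{L^\infty(\Omega)} \longrightarrow 0.
\]
The concavity consequences follow mechanically from Proposition~\ref{prop_eps_strong_conc}: $\log u$ is concave on $\Omega$ by Theorem~\ref{thm_intr_recap_plaplac} with $q = p-1$; when $N=2$ the same theorem gives strict concavity of $\log u$, hence uniform concavity on the compact set $\overline{\Omega_\delta}$ (first item of Proposition~\ref{prop_eps_strong_conc}), and the second item upgrades this to $\eps$-uniform concavity of $\log u_s$ for $s$ close to $1$; when $p=2$, Theorem~\ref{thm_intr_recap} yields strong concavity of $\log u$ on every $\Omega' \Subset \Omega$, and Proposition~\ref{prop_eps_strong_conc} promotes this to the $\eps$-strong concavity of $\log u_s$ claimed.

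The main technical obstacle lies in the first two steps. The statement $\lambda_s \to \lambda$ is only meaningful once the normalization implicit in $(-\Delta)^s_p$ is matched against that of $-\Delta_p$ through the appropriate dimensional constants, and a consistent BBM-type scaling must be fixed. Equally delicate is obtaining regularity and strict positivity estimates for the fractional eigenfunctions that are \emph{uniform as $s \uparrow 1$}: the usual nonlocal H\"older bounds degenerate in their dependence on $s$, so one must invoke versions (such as in Brasco--Lindgren--Schikorra or Iannizzotto--Mosconi--Squassina) whose constants are stable up to $s=1$. Once these uniform estimates and the simplicity of the limiting first eigenvalue are in hand, the rest of the proof is a soft application of the machinery already developed in Section~\ref{sec_prelimin}.
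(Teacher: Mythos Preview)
Your proposal is correct and follows essentially the same strategy as the paper: uniform-in-$s$ H\"older bounds plus compactness give $L^\infty$ convergence $u_s\to u$, positivity of $u$ on $\Omega_\delta$ upgrades this to convergence of $\log u_s$, and then Proposition~\ref{prop_eps_strong_conc} together with Theorems~\ref{thm_intr_recap} and~\ref{thm_intr_recap_plaplac} yields the concavity statements. The only notable technical difference is that the paper obtains $L^\infty$ convergence by combining the uniform $C^{0,\alpha}$ bound with the interpolation inequality~\eqref{eq_diff_soluz_origin} and the $L^p$ convergence from \cite{BPS16,BoSa20}, whereas you extract convergent subsequences directly via Ascoli--Arzel\`a; both routes are standard and equivalent here. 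On the point you flag as the main obstacle---the uniformity in $s$ of the H\"older constants---the paper does not invoke Brasco--Lindgren--Schikorra but instead inspects the proof of \cite[Theorem~1.1]{IMS14}: the apparent blow-up comes only through the factor $\|f_s\|_\infty^{1/(p-1)}$ with $f_s=\lambda_s u_s^{p-1}$, and one bypasses it by feeding in directly the uniform $L^\infty$ bound~\eqref{eq_uniform_linf_frac} from \cite{BPS16} rather than the non-uniform $\lambda_s\|u_s\|_\infty^{p-1}$.
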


\begin{proof}
Since $\norm{u_s}_p \equiv 1$, by \cite[Theorem 1.2]{BPS16} we have that 
$$(1-s) \lambda_s \leq C$$ 
as $s \to 1$. 
By \cite[Theorem 2.10]{BPS16} (see also \cite[Corollary 4.2]{IMS14}) we know that $u_s \in L^{\infty}(\Omega)$, and more precisely
$$\norm{u_s}_{\infty} \leq \Big(C \frac{1}{(N-sp)^{p-1}} s (1-s) \lambda_s\Big)^{\frac{N}{s p^2}} \norm{u_s}_p \quad \hbox{if $p<N$ (thus $sp<N$)},$$
$$\norm{u_s}_{\infty} \leq \big(C \diam(\Omega)^{N(s-\frac{1}{2})} s (1-s) \lambda_s\big)^{\frac{2}{N}} \norm{u_s}_p \quad \hbox{if $p=N$ (and $s\geq \frac{3}{4}$)},$$
$$\norm{u_s}_{\infty} \leq \left(C \diam(\Omega)^{sp - N} (1-s) \lambda_s\right)^{\frac{1}{p}} \norm{u_s}_p \quad \hbox{if $p>N$ (and $s > \frac{N}{p}$)},$$
where $C=C(N,p)>0$. 
In particular, 
\begin{equation}\label{eq_uniform_linf_frac}
\norm{u_s}_{\infty} \leq C \quad \hbox{for $s \geq \frac{3}{4}$ (and $s>\frac{N}{p}$ if $p>N$)}.
\end{equation}
By \cite[(2.27) in proof of Theorem 2.10]{BPS16} we also know that 
$$[u_s]_{C^{0,s-\frac{N}{p}}(\overline{\Omega})} \leq \big(C (1-s) \lambda_s\big)^{\frac{1}{p}} \norm{u_s}_p\quad \hbox{if $p>N$ (and $s > \frac{N}{p}$)}$$
thus $\norm{u_s}_{C^{0,s_0-\frac{N}{p}}(\overline{\Omega})} \leq C$ for $s \geq s_0$ ($s_0>\frac{N}{p}$ however fixed).
We need to deal with $p \leq N$.

We know \cite[Theorem 1.1]{IMS14} that $u_s \in C^{0,\alpha_s}(\overline{\Omega})$ for some $\alpha_s= \alpha_s(p,N) \in (0,s)$, and more precisely, set $f_s:= \lambda_s u_s^{p-1}$, we have
$$\norm{u_s}_{C^{0,\alpha_s}(\overline{\Omega})} \leq C_s \norm{f_s}_{\infty}^{\frac{1}{p-1}}$$
for some $C_s=C_s(\Omega, p, N)$; from the proof therein we easily see that $\alpha_s$ and $C_s$ are equibounded for $s \to 1$.
On the other hand $\norm{f_s}_{\infty}$ are not equibounded in $s$, thus we give a closer inspection of the proof: we know indeed that
$$K_1 := 0 \leq (-\Delta)^s_p u_s \leq \lambda_s \norm{u_s}_{\infty}^{p-1} =: K_2^s.$$
The role of $K_1$ is played in \cite[Theorem 5.2 and Lemma 5.3]{IMS14} (and used in \cite[Theorem 5.4]{IMS14}), while the role of $K_2^s$ is played in the proof of \cite[Theorem 1.1]{IMS14} (implicitly also in \cite[Theorem 5.4 and Corollary 5.5]{IMS14}) only to bound $\norm{u_s}_{\infty}$: we can thus substitute this bound with the finer \eqref{eq_uniform_linf_frac}, which is uniform in $s$.
Hence, for some $\alpha \in (0,1)$ we have
$$\norm{u_s}_{C^{0,\alpha}(\overline{\Omega})} \leq C$$
for each $s$ large.
By \eqref{eq_diff_soluz_origin} we have
$$\norm{u_s - u}_{\infty} \leq \big( \norm{u_s}_{C^{0,\alpha_s}(\overline{\Omega})} + \norm{u}_{C^{0,\alpha}(\overline{\Omega})}\big)^{1-\theta_p} \norm{u_s - u}_p^{\theta_p}$$
where $\theta_p = \frac{\alpha}{\alpha + \frac{N}{p}}$,
thus
$$\norm{u_s - u}_{\infty} \leq C \norm{u_s - u}_p^{\theta_p}.$$
By \cite[Theorem 1.2]{BPS16} we have that there exists $s_k \to 1$ such that $u_{s_k} \to u$ in $L^p(\Omega)$, thus
$$\norm{u_{s_k} - u}_{\infty} \to 0.$$
Actually we can say better: by \cite[Theorem 5.1]{BoSa20}%
\footnote{Notice the difference in the notation in that paper, where $(-\Delta)^s_p$ is substituted with $(1-s) (-\Delta)^s_p$.} we know that, for each $s_k \to 1$, being $(1-s_k)\lambda_{s_k} \to \lambda$, there exists $u_{s_{k_n}} \to \bar{u}$ in $L^p(\Omega)$, where $\bar{u}=u$ by uniqueness of the problem and the $L^p$-constraint; by topological arguments, we have $u_s \to u$ as $s \to 1$ in $L^p(\Omega)$, which implies (by the previous argument) the convergence in $L^{\infty}(\Omega)$.
In particular, for each $\delta>0$, 
$$\norm{\log(u_{s}) - \log(u)}_{L^{\infty}(\Omega_{\delta})} \to 0 \quad \hbox{as $s \to 1$}.$$
We conclude by Theorem \ref{thm_intr_recap_plaplac}, Theorem \ref{thm_intr_recap} and Propositions \ref{prop_strong_conc} and \ref{prop_eps_strong_conc}.
\end{proof}

\begin{remark}
Arguing as in the end of the proof of Proposition \ref{thm_superhomog}, we observe that a $C^2$ convergence $u_s \to u$ (if proved) would imply that $u_s$ is actually strongly convex for $s \in (s_0(\delta), 1)$, and thus uniqueness and nondegeneracy of the critical point of $u_s$. We notice that a $C^{2,\beta}$ estimate, uniform in $s$, is given for $p=2$ in \cite[Lemma 4.4]{CaSi14} for equations in $\R^N$; see also \cite[Remark 4.1]{BPS16} for further comments.
\end{remark}

\begin{remark}
Differently from Remark \ref{rem_diff_log}, a global convergence of the type $\norm{\log(u_{s}) - \log(u)}_{L^{\infty}(\Omega)} \to 0$ is here not possible: indeed, considered $(-\Delta)^{s_1} u_1 = f(u_1)$ and $(-\Delta)^{s_2} u_2 = g(u_2)$ with suitable $f$ and $g$, by regularity results \cite[Theorem 4.4]{IMS14} and fractional Hopf boundary lemma \cite[Theorem 1.5(2)]{DpQ17}, we have $0<\frac{1}{C} \leq \frac{u_1}{d(\cdot, \Omega)^{s_1}}, \frac{u_2}{d(\cdot, \Omega)^{s_2}} \leq C$, thus $\frac{u_1}{u_2}, \frac{u_2}{u_1} \in L^{\infty}(\Omega)$ if $s_1=s_2$; for the same reason, this does not happen if $s_1 \neq s_2$.
\end{remark}

\begin{remark}
Similar arguments could be employed for $p$-subhomogeneous problems 
$$(-\Delta)^s_p u_s = u_s^q$$
with $q<p-1$, by exploiting some estimate uniform in $s$ and \cite[Theorem 4.5]{BoSa20}. 
Furthermore, in the case $q=0$, one could exploit \cite[Theorem 3.11]{War16} to deduce concavity properties for nonautonomous equations, i.e. $(-\Delta)^s_p = a(x)$, when $a$ is close to a constant, in the spirit of Section \ref{sec_diff_solutions}.
If one has a stability argument also for $s_k \to s^* \in (0,1)$, then one could use the result by \cite{Kul17} to say that, for $p=2$ and $s <\frac{1}{2}$, close to $\frac{1}{2}$, the solutions of the torsion problem $(-\Delta)^s u = 1$, $\Omega \subset \R^2$ are $\eps$-uniformly concave, at least for some class of smooth domains \cite[Definition 2.1]{Kul17}. 
\end{remark}

Finally, we point out that it could be interesting to study the \emph{fractional concavity} of solutions of fractional equations, where the notion has been introduced in \cite{DpQR22}.

\appendix

\section{Some facts on the $p$-Laplacian}
\label{sec_append_p_lapl}

In what follows, we recall some results in the literature folklore, but of which the authors were not able to find a proof. We will state the results in the case of a general $f=f(x,t)$, that is equation \eqref{eq_general_fxu}.

\subsection{Uniqueness}

We state a uniqueness result when $ \frac{f(x, t)}{t^{p-1}}$ is strictly decreasing (see also \cite[Theorem 2.1]{GuVe89}). 
When $f(x,t) \equiv g(t)$, we refer also to \cite[Proposition 3.8]{BMS22} which essentially says that the same uniqueness holds if $ \frac{g(t)}{t^{p-1}}$ is nonincreasing and decreasing only on a small region $[0,\delta]$; see also \cite{Mos23} for further comments. See also \cite{BeKa02, KaLi06} where the interesting tool of hidden convexity has been used to deal with general $\Omega$. 
Finally, we refer to Section \ref{sec_superhomog} for the superhomogeneous case.

\begin{lemma}[Brezis-Oswald uniqueness]
\label{lem_uniquen}
Let $\Omega \subset \R^N$, $N \geq 1$, be open, bounded, connected, satisfying the interior sphere condition, and let $p \in (1,+\infty)$. 
Assume that $f: \Omega \times (0,+\infty) \to \R$ satisfies
\begin{itemize}
\item $t \mapsto \frac{f(x, t)}{t^{p-1}}$ is nonincreasing, 
\item $f$ is bounded on bounded sets,
\end{itemize}
and let $u,v \in W^{1,p}_0(\Omega) \cap C^1(\overline{\Omega})$ be two solutions of
\eqref{eq_general_fxu}. 
Then
$$v = k u \quad \hbox{for some $k \in (0,+\infty)$}.$$
Assume moreover that
\begin{itemize}
\item $t \mapsto \frac{f(x, t)}{t^{p-1}}$ is strictly decreasing.
\end{itemize}
Then $v=u$. 
\end{lemma}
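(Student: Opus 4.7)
The plan is to run the classical Brezis--Oswald / Díaz--Saa argument, adapted to the $p$-Laplace setting. First I would use the interior sphere condition together with the Hopf boundary lemma (in the form of \cite[Theorem 5]{Vaz84}, applicable thanks to the bound on $f$ on bounded sets and $f(x,t)\geq 0$ near $t=0$ which the hypothesis on $f/t^{p-1}$ combined with the boundedness gives, or by adding a trivial comparison step) to ensure $u,v>0$ in $\Omega$ with $\partial_\nu u,\partial_\nu v>0$ on $\partial\Omega$. Since $u,v\in C^1(\overline{\Omega})$, this will give $u/v,\,v/u\in L^{\infty}(\overline{\Omega})$, which is the key regularity fact needed below.

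Next I would plug the test function $\varphi_1:=(u^p-v^p)/u^{p-1}$ in the weak formulation for $u$ and $\varphi_2:=(v^p-u^p)/v^{p-1}$ in the weak formulation for $v$. By the previous step these are admissible elements of $W^{1,p}_0(\Omega)$. Adding the two identities, a direct algebraic manipulation yields the Díaz--Saa identity
\begin{align*}
\int_\Omega\Bigl(&|\nabla u|^p+|\nabla v|^p-|\nabla u|^{p-2}\nabla u\cdot\nabla\!\bigl(\tfrac{v^p}{u^{p-1}}\bigr)-|\nabla v|^{p-2}\nabla v\cdot\nabla\!\bigl(\tfrac{u^p}{v^{p-1}}\bigr)\Bigr)\\
&\qquad=\int_\Omega\Bigl(\tfrac{f(x,u)}{u^{p-1}}-\tfrac{f(x,v)}{v^{p-1}}\Bigr)(u^p-v^p).
\end{align*}
The left-hand side is $\geq 0$ by the (pointwise) Picone / Díaz--Saa inequality, with equality only where $\nabla(u/v)=0$. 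Since the hypothesis ``$t\mapsto f(x,t)/t^{p-1}$ nonincreasing'' makes the integrand of the right-hand side pointwise $\leq 0$ (the factors $(f(x,u)/u^{p-1}-f(x,v)/v^{p-1})$ and $(u^p-v^p)$ have opposite signs), both sides must vanish. By connectedness of $\Omega$ and $\nabla(u/v)=0$ a.e., I conclude $u=k\,v$ for some $k>0$.

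Finally, to pass from $u=kv$ to $u=v$ under strict monotonicity, I would substitute $u=kv$ back into its equation: $k^{p-1}(-\Delta_pv)=f(x,kv)$, so $f(x,kv)/(kv)^{p-1}=f(x,v)/v^{p-1}$ pointwise in $\Omega$; strict decrease of $t\mapsto f(x,t)/t^{p-1}$ forces $k=1$. The main technical obstacle I anticipate is justifying that $\varphi_1,\varphi_2\in W^{1,p}_0(\Omega)$ and that the Díaz--Saa inequality holds in this generality; this is where the $C^1(\overline{\Omega})$ regularity plus the Hopf lemma (which guarantees $u,v$ vanish linearly at the boundary with a matching rate, so that $u/v$ and $v/u$ stay bounded up to $\partial\Omega$) are essential. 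Once these admissibility and integrability issues are handled, the remainder of the argument is the standard pointwise inequality chain described above.
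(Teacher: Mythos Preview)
Your proposal is correct and follows essentially the same route as the paper: both use the Hopf boundary lemma (via \cite{Vaz84}) together with $C^1(\overline{\Omega})$ regularity to secure $u/v,\,v/u\in L^\infty(\Omega)$, then the Díaz--Saa/Picone identity to force both sides of the integral relation to vanish, and finally the equality case of Picone to deduce proportionality. The only cosmetic difference is that the paper invokes \cite[Lemma 2]{DiSa87} as a black box for the integral inequality and then applies Picone separately to extract proportionality, whereas you derive the full identity in one step by testing with $(u^p-v^p)/u^{p-1}$ and $(v^p-u^p)/v^{p-1}$; the substance is the same.
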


\begin{proof}
First, we notice that $\Delta_p u, \Delta_p v \in L^{\infty}(\Omega)$ and that $\frac{u}{v}, \frac{v}{u} \in L^{\infty}(\Omega)$: indeed, being $u \in C^1(\overline{\Omega})$, 
by Hopf boundary lemma \cite[Theorem 5]{Vaz84} 
we have $\partial_{\nu} u, \partial_{\nu} v > 0$ on $\partial \Omega$, and thus by de l'Hopital rule we have, for any $x \in \partial \Omega$, 
$$\limsup_{t \to 0} \frac{u(x+ t \nu)}{v(x + t \nu)} \leq \limsup_{t \to 0} \frac{\partial_t u(x+ t \nu)}{\partial_t v(x + t \nu)} = \limsup_{t \to 0} \frac{\nabla u(x+t\nu) \cdot \nu}{\nabla v(x+t\nu) \cdot \nu} = \frac{\partial_{\nu} u(x)}{\partial_{\nu} v(x)} < +\infty,$$
thus the claim.
Hence we can apply \cite[Lemma 2]{DiSa87} and obtain
$$\int_{\Omega} \left( \frac{f(x,u)}{u^{p-1}} - \frac{f(x,v)}{v^{p-1}}\right) (u^p - v^p) = \int_{\Omega} \left( \frac{-\Delta_p u}{u^{p-1}} - \frac{-\Delta_p v}{v^{p-1}}\right) (u^p - v^p) \geq 0.$$
Since $t \mapsto \frac{f(x, t)}{t^{p-1}}$ is nonincreasing, we have
\begin{equation}\label{eq_corol_lemma37}
\frac{f(x,u)}{u^{p-1}} = \frac{f(x,v)}{v^{p-1}};
\end{equation}
notice that, if $t \mapsto \frac{f(x, t)}{t^{p-1}}$ is strictly decreasing, we have the second claim.
By Picone's inequality \cite{AlHu98} we gain 
\begin{equation}\label{eq_corol_picone}
|\nabla v|^{p-2} \nabla v \cdot \nabla \left(\tfrac{u^p}{v^{p-1}}\right) \leq |\nabla u|^p
\end{equation}
where the equality is attained if and only if $u$ and $v$ are proportional. Integrating, we get 
$$\int_{\Omega} |\nabla v|^{p-2} \nabla v \cdot \nabla \left(\tfrac{u^p}{v^{p-1}}\right) \leq \int_{\Omega} |\nabla u|^p.$$
Notice that $\frac{u^p}{v^{p-1}} = (\frac{u}{v})^{p-1} u \in L^{\infty}(\Omega) \subset L^p(\Omega)$ and $\nabla \left(\frac{u^p}{v^{p-1}}\right) =
 p \left(\frac{u}{v}\right)^{p-1} \nabla u + (p-1)\left(\frac{u}{v}\right)^{p} \nabla v
 \in L^p(\Omega)$, 
thus $\frac{u^p}{v^{p-1}} \in W^{1,p}_0(\Omega)$ and by the equations
and the above relation we obtain
$$\int_{\Omega} f(x,v) \frac{u^p}{v^{p-1}} = \int_{\Omega} |\nabla v|^{p-2} \nabla v \cdot \nabla \left(\tfrac{u^p}{v^{p-1}}\right) \leq \int_{\Omega} |\nabla u|^p = \int_{\Omega} |\nabla u|^{p-2} \nabla u \cdot \nabla u = \int_{\Omega} f(x,u) u $$
that is
$$\int_{\Omega}\left(\frac{f(x,v)}{v^{p-1}} - \frac{f(x,u)}{u^{p-1}}\right)u^p \leq 0.$$
On the other hand, due to \eqref{eq_corol_lemma37}, the last is an equality, and so it must be \eqref{eq_corol_picone}, which implies the claim.
\end{proof}

\subsection{Comparison principle}
We show now a comparison principle for quasilinear equations in presence of a $p$-subhomogeneous function; notice that we are not requiring $f$ itself to be decreasing. 
The proof is inspired by \cite[Theorem 1.5]{MST24}; notice moreover that the result was obtained, when $f$ is a power, by \cite[Theorem 4.1]{BPZ22} in a more general setting through the use of hidden convexity.
See also \cite[Theorems 1.3 and 3.3]{DaSc04}, \cite[Lemma 2.1]{BMS23}. 

Finally, notice that the uniqueness result part of Lemma \ref{lem_uniquen} can be deduced from this comparison principle.

\begin{lemma}[Comparison principle]
\label{lem_comp_princip}
Let $\Omega \subset \R^N$, $N \geq 1$, be open, bounded, connected, satisfying the interior sphere condition, and let $p \in (1,+\infty)$. 
Assume that $f: \Omega \times (0,+\infty) \to \R$ satisfies
\begin{itemize}
\item $t \mapsto \frac{f(x, t)}{t^{p-1}}$ strictly decreasing, 
\item $f$ is bounded on bounded sets,
\end{itemize}
and let $u,v \in
 C^1(\overline{\Omega})$ be a subsolution and a supersolution,
namely
$$ -\Delta_p u - f(x,u) \leq -\Delta_p v - f(x,v) \quad \hbox{ in $\Omega$}$$
with 
$$u,v >0 \; \hbox{ in $\Omega$}, \quad u\leq v \; \hbox{ on $\partial \Omega$}.$$
Then
$$u \leq v \quad \hbox{in $\Omega$.}$$
\end{lemma}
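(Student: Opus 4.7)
The plan is to argue by contradiction. Suppose the set $\Omega^+ := \{x \in \Omega : u(x) > v(x)\}$ is nonempty. Since $u, v \in C^1(\overline{\Omega})$ are continuous and $u \leq v$ on $\partial\Omega$, the set $\Omega^+$ is open and its boundary satisfies $\partial\Omega^+ \subset \{u = v\}$. Moreover $u \geq v > 0$ on $\overline{\Omega^+}$: in the interior of $\Omega$ this is automatic from $v > 0$; on $\overline{\Omega^+} \cap \partial \Omega$, the hypothesis $u \leq v$ forces $u = v$, and using Hopf's boundary lemma \cite[Theorem 5]{Vaz84} at such points (applied to whichever solution attains zero there) one ensures that $u/v$ and $v/u$ stay bounded up to $\partial\Omega^+$.

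Next, I would verify that the Picone-type test functions
\[
\phi_u := \frac{u^p - v^p}{u^{p-1}} \quad \text{and} \quad \phi_v := \frac{u^p - v^p}{v^{p-1}},
\]
extended by zero outside $\Omega^+$, are admissible: both are nonnegative on $\Omega^+$, vanish on $\partial\Omega^+$ (since $u = v$ there), and lie in $W^{1,p}_0(\Omega^+)$ thanks to the boundedness of $u/v$, $v/u$ discussed above, exactly as in the proof of Lemma \ref{lem_uniquen}.

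Then I would interpret the hypothesis as saying that $u$ is a weak subsolution and $v$ a weak supersolution of $-\Delta_p w = f(x,w)$ (which is the content of the combined inequality). Testing the subsolution relation for $u$ against $\phi_u \geq 0$ and the supersolution relation for $v$ against $\phi_v \geq 0$, then subtracting and integrating by parts on $\Omega^+$ (valid because $\phi_u, \phi_v$ vanish on $\partial \Omega^+$), yields
\[
\int_{\Omega^+} \left( \frac{-\Delta_p u}{u^{p-1}} - \frac{-\Delta_p v}{v^{p-1}} \right) (u^p - v^p) \;\leq\; \int_{\Omega^+} \left( \frac{f(x,u)}{u^{p-1}} - \frac{f(x,v)}{v^{p-1}} \right)(u^p - v^p).
\]
Since $u > v > 0$ on $\Omega^+$ and $t \mapsto f(x,t)/t^{p-1}$ is strictly decreasing, the right-hand integrand is strictly negative where $u^p - v^p > 0$, so the right-hand side is strictly negative. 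On the other hand, applying D\'iaz--Saa's inequality \cite[Lemma 2]{DiSa87} on $\Omega^+$ (which is legitimate because $u, v > 0$ on $\overline{\Omega^+}$ with $u = v$ on $\partial\Omega^+$) gives the reverse bound
\[
\int_{\Omega^+} \left( \frac{-\Delta_p u}{u^{p-1}} - \frac{-\Delta_p v}{v^{p-1}} \right)(u^p - v^p) \;\geq\; 0.
\]
The two inequalities together force $0 \leq \text{LHS} < 0$, a contradiction; hence $\Omega^+ = \emptyset$ and $u \leq v$ in $\Omega$.

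The main obstacle will be ensuring the admissibility of $\phi_u, \phi_v$ and the applicability of D\'iaz--Saa's inequality on the irregular open set $\Omega^+$, whose boundary may meet $\partial\Omega$ at points where $u = v$ possibly vanishes. This is resolved by a careful boundary analysis using $C^1(\overline{\Omega})$ regularity and Hopf's lemma (as in the proof of Lemma \ref{lem_uniquen}), after which the test-function computation and the strict monotonicity of $f(x,t)/t^{p-1}$ deliver the contradiction directly, paralleling the uniqueness argument above; see also \cite[Theorem 1.5]{MST24} for a closely related implementation of this scheme.
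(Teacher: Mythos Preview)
Your proposal is correct and follows essentially the same route as the paper: the same Picone/D\'iaz--Saa mechanism applied to the test functions $(u^p-v^p)^+/u^{p-1}$ and $(u^p-v^p)^+/v^{p-1}$, combined with the strict monotonicity of $t\mapsto f(x,t)/t^{p-1}$. The only noteworthy difference is that the paper sidesteps your concern about the irregular set $\Omega^+$ by checking directly that $w/u^{p-1}$ and $w/v^{p-1}$ (with $w=(u^p-v^p)^+$) lie in $W^{1,p}_0(\Omega)$ and unfolding the Picone inequality pointwise, rather than invoking D\'iaz--Saa on $\Omega^+$; this makes the admissibility step cleaner but is otherwise the same computation.
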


\begin{proof}
Set $w:= (u^p-v^p)^+$, we need to prove that $w\equiv 0$. Consider $\Omega_+:= \textup{supp}(w)=\{u \geq v\}$.
As in the proof of Lemma \ref{lem_uniquen} we have $\frac{u}{v}, \frac{v}{u} \in L^{\infty}(\Omega)$,
thus we compute
$$\nabla \left(\tfrac{w}{u^{p-1}}\right) = \chi_{\Omega_+} \left( \nabla u - \nabla \left(\tfrac{v^p}{u^{p-1}}\right) + \delta (p-1) \nabla u\right);$$
this first tells us that $\frac{w}{u^{p-1}} \in W^{1,p}_0(\Omega)$. Moreover
$$|\nabla u|^{p-2} \nabla u \cdot \nabla \left(\tfrac{w}{u^{p-1}}\right) = \chi_{\Omega_+} \left( |\nabla u|^p - |\nabla u|^{p-2} \nabla u \cdot \nabla \left(\tfrac{v^p}{u^{p-1}}\right)\right);$$
similarly 
$$\nabla \left(\tfrac{w}{v^{p-1}}\right) = \chi_{\Omega_+} \left( -\nabla v + \nabla \left(\tfrac{u^p}{v^{p-1}}\right)\right),$$
thus $\frac{w}{v^{p-1}} \in W^{1,p}_0(\Omega)$ and
$$|\nabla v|^{p-2} \nabla v \cdot \nabla \left(\tfrac{w}{v^{p-1}}\right) = \chi_{\Omega_+} \left( - |\nabla v|^p + |\nabla v|^{p-2} \nabla v \cdot \nabla \left(\tfrac{u^p}{v^{p-1}}\right)\right).$$
Therefore
\begin{align*}
\MoveEqLeft |\nabla u|^{p-2} \nabla u \cdot \nabla \left(\tfrac{w}{u^{p-1}}\right) - |\nabla v|^{p-2} \nabla v \cdot \nabla \left(\tfrac{w}{v^{p-1}}\right) \\
& = \chi_{\Omega_+} \left( |\nabla v|^p - |\nabla u|^{p-2} \nabla u \cdot \nabla \left(\tfrac{v^p}{u^{p-1}}\right)\right)
+ \chi_{\Omega_+} \left( |\nabla u|^p - |\nabla v|^{p-2} \nabla v \cdot \nabla \left(\tfrac{u^p}{v^{p-1}}\right)\right) 
\geq 0
\end{align*}
by Picone's inequality \eqref{eq_corol_picone}. 
 In particular, being $\frac{w}{u^{p-1}}, \frac{w}{v^{p-1}}$ nonnegative test functions,
$$\int_{\Omega} \left( f(x,u) \frac{w}{u^{p-1}} - f(x,v) \frac{w}{v^{p-1}}\right) \geq \int_{\Omega_+} \left( |\nabla u|^{p-2} \nabla u \cdot \nabla \left(\tfrac{w}{u^{p-1}}\right) - |\nabla v|^{p-2} \nabla v \cdot \nabla \left(\tfrac{w}{v^{p-1}}\right) \right) \geq 0.$$
On the other hand, by the monotonicity $\frac{f(x,u)}{u^{p-1}} - \frac{f(x,v)}{v^{p-1}} \leq 0$ on $\Omega_+$, 
thus
$$ \left( \frac{f(x,u)}{u^{p-1}} - \frac{f(x,v)}{v^{p-1}}\right) w \equiv 0 \quad \hbox{on $\Omega$}.$$
Finally, by the strict monotonicity, we gain $w \equiv 0$, that is the claim.
\end{proof} 

As a consequence we obtain a uniform bound from below. 
\begin{corollary}
\label{corol_stima_basso}
Let $\Omega \subset \R^N$, $N \geq 1$, be open, bounded, connected, satisfying the interior sphere condition, and let $p \in (1,+\infty)$. 
Assume that $f_n, f_0: \Omega \times (0,+\infty) \to \R$ satisfy
\begin{itemize}
\item $t \mapsto \frac{f_0(x, t)}{t^{p-1}}$ strictly decreasing, 
\item $f_0$ is bounded on bounded sets,
\item $f_n(x,t) \geq f_0(x,t)$.
\end{itemize}
Let $u_n \in W^{1,p}_0(\Omega) \cap C^1(\overline{\Omega})$ be positive solutions of $-\Delta u_n = f_n(x,t)$, $-\Delta u_0 = f_0(x,t)$ with Dirichlet boundary conditions. 
Then, for each $\delta>0$, there exists $C=C(\Omega, N, \delta)>0$ such that
$$\inf_{\Omega_{\delta}} u_n \geq C >0.$$
\end{corollary}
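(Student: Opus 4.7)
\textbf{Proof plan for Corollary \ref{corol_stima_basso}.}

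The plan is to reduce the claim to a statement about the single reference function $u_0$ by means of the comparison principle of Lemma \ref{lem_comp_princip}. First I would set $u := u_0$ and $v := u_n$, and verify the hypothesis of the comparison lemma with $f := f_0$. On one hand $u_0$ solves $-\Delta_p u_0 - f_0(x,u_0)=0$, and on the other hand
\[
-\Delta_p u_n - f_0(x,u_n) = f_n(x,u_n) - f_0(x,u_n) \geq 0,
\]
by the standing assumption $f_n \geq f_0$. Hence $u_0$ is a subsolution and $u_n$ is a supersolution of the equation driven by $f_0$, and the two functions agree on $\partial\Omega$ (both vanish there). Since $f_0$ is bounded on bounded sets and $t \mapsto f_0(x,t)/t^{p-1}$ is strictly decreasing, Lemma \ref{lem_comp_princip} applies and gives
\[
u_0 \leq u_n \quad \text{in } \Omega.
\]

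Next I would show that $u_0$ is bounded below on each inner parallel set $\overline{\Omega_\delta}$ by a positive constant. By assumption $u_0 \in C^1(\overline{\Omega})$ is positive in $\Omega$, while $\overline{\Omega_\delta}$ is compact and contained in $\Omega$ (by the very definition $\Omega_\delta = \{x \in \Omega \mid d(x,\partial\Omega)>\delta\}$). Therefore $u_0$ attains its minimum on $\overline{\Omega_\delta}$ at some interior point, at which it is strictly positive; denote this minimum by $C(\delta) := \min_{\overline{\Omega_\delta}} u_0 > 0$. Combining with the previous step,
\[
\inf_{\Omega_\delta} u_n \geq \inf_{\Omega_\delta} u_0 \geq C(\delta) > 0,
\]
which is precisely the desired estimate.

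There is no real obstacle here; the entire content of the corollary is the comparison step, and the rest is a compactness argument on the fixed function $u_0$. The only point that requires a moment of care is that the strict monotonicity of $f_0(x,t)/t^{p-1}$ (and not of $f_n$) is exactly what one needs to invoke Lemma \ref{lem_comp_princip}, so the reference role of $u_0$ cannot be swapped with $u_n$ without further hypotheses on the whole family $(f_n)_n$.
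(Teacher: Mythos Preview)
Your proof is correct and follows exactly the approach the paper intends: it presents this corollary as a direct consequence of the comparison principle (Lemma \ref{lem_comp_princip}), and your argument---applying that lemma with $u_0$ as subsolution and $u_n$ as supersolution of the equation driven by $f_0$, then using compactness of $\overline{\Omega_\delta}$---is precisely the natural implementation.
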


\begin{remark}
We observe that, in the specific case $f(x,t) \equiv a(x)$, we can obtain the uniqueness result of Lemma \ref{lem_uniquen} as a consequence of Corollary \ref{corol_confron_2}, as well as the estimate in Corollary \ref{corol_stima_basso} in the specific case $f_n(x,t) \equiv a_n(x) \geq a_{0}$. 
\end{remark}

\addtocontents{toc}{\protect\setcounter{tocdepth}{2}}
\addtocontents{toc}{\SkipTocEntry} 
\subsection*{Acknowledgments}

The authors thank Lorenzo Brasco, Umberto Guarnotta, Luigi Montoro, Berardino Sciunzi and Sunra Mosconi for having made us aware of some references cited in Section \ref{sec_singular} and Appendix \ref{sec_append_p_lapl}.
The authors express their gratitude to the anonymous referee for their valuable comments.


\end{document}